\date{}
\renewcommand{\uppercasenonmath}[1]{}
\numberwithin{equation}{section} \theoremstyle{plain}
\newtheorem*{thm*}{Main Theorem}
\newtheorem{thm}{Theorem}[section]
\newtheorem{cor}[thm]{Corollary}
\newtheorem*{cor*}{Corollary}
\newtheorem{lem}[thm]{Lemma}
\newtheorem*{lem*}{Lemma}
\newtheorem{prop}[thm]{Proposition}
\newtheorem*{prop*}{Proposition}
\newtheorem{rem}[thm]{Remark}
\newtheorem*{rem*}{Remark}
\newtheorem{exa}[thm]{Example}
\newtheorem*{exa*}{Example}
\newtheorem{df}[thm]{Definition}
\newtheorem*{df*}{Definition}
\newtheorem*{conj*}{Conjecture}
\newtheorem*{ack*}{ACKNOWLEDGEMENTS}
\newcommand{\pf}{\noindent\begin {proof}}
\newcommand{\epf}{\end{proof}}
\newcommand{\bc}{\begin{center}}
\newcommand{\ec}{\end{center}}
\newcommand{\ra}{\rightarrow}
\newcommand{\Mod}{\mbox{\rm Mod}}
\newcommand{\coker}{\mbox{\rm coker}}
\begin{document}
\begin{center}
{\Large  \bf  Special precovering classes in comma categories}

\vspace{0.5cm}  Jiangsheng Hu$^a$ and Haiyan Zhu$^b$$\footnote{Corresponding author}$ \\
\medskip

\hspace{-4mm}$^{a}$School of Mathematics and Physics, Jiangsu University of Technology,
 Changzhou 213001, China\\
  $^b$College of Science, Zhejiang University of Technology, Hangzhou 310023, China\\

E-mails: jiangshenghu@jsut.edu.cn and hyzhu@zjut.edu.cn\\
\end{center}

\bigskip
\centerline { \bf  Abstract}
 \leftskip10truemm \rightskip10truemm \noindent
 Let $T$ be a right exact functor from an abelian category $\mathscr{B}$ into another abelian category $\mathscr{A}$. Then there exists a functor ${\bf p}$ from the product category $\mathscr{A}\times\mathscr{B}$ to the comma category $(T\downarrow\mathscr{A})$.
 In this paper, we study the property of the extension closure of some classes of objects in $(T\downarrow\mathscr{A})$, the exactness of the functor ${\bf p}$ and the detailed description of orthogonal classes of a given class ${\bf p}(\mathcal{X},\mathcal{Y})$ in $(T\downarrow\mathscr{A})$. Moreover,  we characterize  when special precovering classes in abelian categories $\mathscr{A}$ and $\mathscr{B}$ can induce special precovering classes in $(T\downarrow\mathscr{A})$. As an application, we prove that under
suitable cases, the class of Gorenstein projective left $\Lambda$-modules over a triangular matrix ring $\Lambda=\left(\begin{smallmatrix}R & M \\0 & S\\\end{smallmatrix}
                                                                               \right)$
 is special precovering if and only if both the classes of Gorenstein projective left $R$-modules and left $S$-modules are special precovering. Consequently, we produce a large variety of examples of rings such that the class of Gorenstein projective modules is special precovering over them.
\medskip
\leftskip10truemm \rightskip10truemm \noindent
\hspace{1em} \\[2mm]
{\bf Keywords:} Abelian category; comma category; special precovering class; cotorsion pair; Gorenstein projective object.\\
{\bf 2010 Mathematics Subject Classification:} 18A25; 16E30; 18G25.

\leftskip0truemm \rightskip0truemm
\section { \bf Introduction}

Recall that for any abelian categories $\mathscr{A}$ and $\mathscr{B}$  and any right exact functor $T:\mathscr{B}\rightarrow\mathscr{A}$,  there exists an abelian category, denoted by $(T\downarrow\mathscr{A})$, consisting of all triples $\left(\begin{smallmatrix}A\\B\end{smallmatrix}\right)_{\varphi}$ where $\varphi: T(B)\rightarrow A$ is a morphism in $\mathscr{A}$. We note that this new abelian category is called a \emph{comma category} in \cite{FGR,Marmaridis}. Detailed definitions can be found in Definition \ref{df:comma} below.
Examples of comma categories include but are not limited to: the category of modules or complexes over a triangular matrix ring, the morphism category of an abelian category and so on (see Example \ref{e exact}). It should be noted that comma categories not only give rise to adjoint functors for constructing recollements in abelian categories and triangulated categories (see \cite{Chen,Psaroudakis}) and establish various
derived equivalences between triangular matrix algebras \cite{Ladkani}, but also are used in the study of Auslander-Reiten quivers and tilting modules (see \cite{Marmaridis}). We refer to a lecture due to  Fossum, Griffith and Reiten \cite{FGR} for more details.

Originated from the concept of injective envelopes, the approximation theory has attracted increasing interest and, hence, obtained considerable development in the context of module categories or abelian categories. Independent research by Auslander,
Reiten and Smal$\o$ in the finite-dimensional case, and by Enochs and Xu for
arbitrary modules, created a general theory of left and right approximations - or
preenvelopes and precovers - of modules. The notions of a preenvelope and a precover, tied up by a
homological notion of a complete cotorsion pair observed by Salce \cite{SL} in 1970s, are dual in the category theoretic
sense. The point is that, though there is
no duality between the categories of all modules, complete cotorsion pairs make it
possible to produce special preenvelopes once we know special precovers exist and vice versa.
Considerable energy has gone into proving that concrete classes are special precovering or
covering under suitable conditions.
Examples include the classes of modules which are flat,
Gorenstein projective and Gorenstein flat. A number of these results can be found in \cite{BEE,BHG,ET,EIK,EIO,Jorgensen,Murfet,JJarxiv}.

The main objective of this paper is to study special precovering classes in the comma category $(T\downarrow\mathscr{A})$. More precisely, we characterize when special precovering classes in abelian categories $\mathscr{A}$ and $\mathscr{B}$ can induce special precovering classes in $(T\downarrow\mathscr{A})$.

In dealing with the above problem, two technical obstacles occur. The first one is that one has to choose an appropriate special precovering class in $(T\downarrow\mathscr{A})$ from that of $\mathscr{A}$ and $\mathscr{B}$. To overcome this obstacle, we introduce a functor ${\bf p}$ from the product category $\mathscr{A}\times\mathscr{B}$ into the comma category $(T\downarrow\mathscr{A})$ and give a detailed description of orthogonal classes of a given class induced by ${\bf p}$, and then establish certain relations connecting orthogonal classes of a given subclass of $(T\downarrow\mathscr{A})$ induced by ${\bf p}$ with some corresponding classes in $\mathscr{A}$ and $\mathscr{B}$. The next technical problem we encounter is that special precovering classes are not closed under direct summands in general. For instance, the class of free $R$-modules is special precovering over any ring $R$ but it is not closed under direct summands. So a crucial ingredient for constructing complete cotorsion pairs from special precovering classes used in \cite[Lemma 2.2.6]{Gober} is missing. To circumvent this problem here, we first replace the class $\mathcal{X}$ with the class of direct summands of objects in $\mathcal{X}$, and then demonstrate that this kind of replacement can preserve the property of special precovering under certain conditions.

To state our main result more precisely, let us first introduce some definitions.

Throughout this paper, $\mathscr{A}$ and $\mathscr{B}$ are abelian categories. For any ring $R$, $\Mod R\ ({\rm mod}R)$ is the class of  (finitely generated) left $R$-modules and $\textrm{Ch}(R)$ is the class of complexes of left $R$-modules. For unexplained ones, we refer the reader to \cite{EJ,FGR,Gober}.

Let $\mathcal {X}$ be a subclass of $\mathscr{A}$. For convenience, we set
\begin{center}
{\vspace{2mm}
$\mathcal {X}^{\bot}:=\{M:\mbox{Ext}^1_\mathscr{A}(X,M)=0\mbox{ for every } X\in\mathcal {X}\}$;

\vspace{2mm}
$^{\bot}\mathcal {X}:=\{M:\mbox{Ext}^1_\mathscr{A}(M,X)=0\mbox{ for every } X\in\mathcal {X}\}$.}
\end{center}
Recall that a morphism $f : G \rightarrow M$ is called a \emph{special $\mathcal {X}$-precover} of an object $M$ if $f$ is surjective, $G\in{\mathcal{X}}$ and ${\ker} f \in{{\mathcal{X}}^{\bot}}$. Dually, a morphism $g : N \rightarrow H$ is called a \emph{special $\mathcal{X}$-preenvelope of an object $N$ if $g$ is injective, $H\in{\mathcal{X}}$ and $\textrm{coker}g \in{^{\bot}{\mathcal{X}}}$. The class $\mathcal{X}$ is called \emph{special precovering} (resp. \emph{special preenveloping}) in $\mathscr{A}$ if every object has a special $\mathcal{X}$-precover (resp. special $\mathcal{X}$-preenvelope).}

Let  $\mathcal {Y}$ be a subclass of $\mathscr{B}$. The functor $T:\mathscr{B}\rightarrow\mathscr{A}$ is called \emph{$\mathcal {Y}$-exact} if $T$ preserves the exactness of the exact sequence $0\rightarrow B\rightarrow B'\rightarrow Y\rightarrow0$ in $\mathscr{B}$ with $Y\in\mathcal {Y}$.

It is well-known that the product category $\mathscr{A}\times\mathscr{B}$ is also an abelian category whenever both $\mathscr{A}$ and $\mathscr{B}$ are abelian categories. Thus there exists a functor ${\bf p}$ from $\mathscr{A}\times\mathscr{B}$ into $(T\downarrow\mathscr{A})$. Detailed definition can be seen in Definition \ref{df:p-functor} below.
It should be noted that the functor ${\bf p}$ was used by Enochs, Cort\'{e}s-Izurdiaga and Torrecillas \cite{ECT} in the study of  Gorenstein conditions over triangular matrix rings. Let $\mathcal {X}$  be a subclass of $\mathscr{A}$ with $0\in{\mathcal{X}}$ and $\mathcal{Y}$ a subclass of $\mathscr{B}$ with $0\in{\mathcal{Y}}$. We set
\begin{displaymath}
\langle{\bf p}(\mathcal {X},\mathcal {Y})\rangle:=\{\left(\begin{smallmatrix}A\\B\end{smallmatrix}\right):0\rightarrow\left(\begin{smallmatrix}X' \\Y'\end{smallmatrix}\right)\rightarrow \left(\begin{smallmatrix}A\\B\end{smallmatrix}\right)\rightarrow\left(\begin{smallmatrix}X\\ Y \end{smallmatrix}\right)\rightarrow0 \ \mbox{is exact} \ \mbox{with} \ \left(\begin{smallmatrix}X'\\ Y' \end{smallmatrix}\right),  \left(\begin{smallmatrix}X\\ Y \end{smallmatrix}\right)  \in  {\bf p}(\mathcal {X},\mathcal {Y})\}.
\end{displaymath}
If $\mathcal {X}$ and $\mathcal {Y}$ are closed under extensions in $\mathscr{A}$ and $\mathscr{B}$, respectively, then
$\langle{\bf p}(\mathcal {X},\mathcal {Y})\rangle$ is the smallest subcategory of ($T\downarrow\mathscr{A}$) containing ${\bf p}(\mathcal {X},\mathcal {Y})$ and closed under extensions (see Proposition \ref{thm:main1}). In particular, if we choose $\mathcal {X}$ = ${\rm mod}R$ and $\mathcal {Y}$ = ${\rm mod}S$, then $\langle{\bf p}(\mathcal {X},\mathcal {Y})\rangle$ defined here is just the monomorphism category $\mathcal{M}(R, M, S)$ introduced by Xiong, Zhang and Zhang in \cite{XZZ} (see Corollary \ref{corollary:3.6}).

Let $\mathscr{A}$ be an abelian category with enough projective objects. Recall that  an object $M$ in $\mathscr{A}$ is called \emph{Gorenstein projective} if $M=\textrm{Z}^{0}(P^\bullet)$ for some exact complex $P^\bullet$ of projective objects which remains exact after applying ${\rm Hom}_{\mathscr{A}}(-,P)$ for any projective object $P$. The complex $P^\bullet$ is called a \emph{complete $\mathscr{A}$-projective resolution}. In what follows, we denote by $\mathcal{GP}_\mathscr{A}$ the subcategory of $\mathscr{A}$ consisting of Gorenstein projective objects and by $\mathcal {GP}(R)$ the class of Gorenstein projective left $R$-modules for any ring $R$.

Now, our main result can be stated as follows.

\begin{thm}\label{thm:special precovering}Let $\mathscr{A}$ and $\mathscr{B}$ both have enough projective objects and enough injective objects.
\begin{enumerate}
\item Assume that $\mathcal{X}$  is a subclass of $\mathscr{A}$ with $0\in{\mathcal{X}}$,  $\mathcal{Y}$ is a subclass of $\mathscr{B}$ with $0\in{\mathcal{Y}}$ and $T:\mathscr{B}\rightarrow \mathscr{A}$ is a $\mathcal {Y}$-exact functor. If $\mathcal {X}$ and  $\mathcal {Y}$ are special precovering, then $\langle{\bf p}(\mathcal {X},\mathcal {Y})\rangle$ is also special precovering in $(T\downarrow\mathscr{A})$. Moreover, the converse holds when $T(\mathcal {Y} \bigcap \mathcal {Y}^{\bot})\subseteq{\mathcal {X}^{\bot}}$ and $\mathcal X, \mathcal Y$ are closed under extensions.
\item If $T:\mathscr{B}\rightarrow \mathscr{A}$ is a compatible functor, then $\mathcal{GP}_\mathscr{A}$ and $\mathcal{GP}_\mathscr{B}$ are special precovering in $\mathscr{A}$ and $\mathscr{B}$, respectively if and only if $\mathcal{GP}_{(T\downarrow\mathscr{A})}$ is special precovering in $(T\downarrow\mathscr{A})$.
\end{enumerate}
\end{thm}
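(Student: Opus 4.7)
The strategy for (1) is to construct a special $\langle\mathbf{p}(\mathcal{X},\mathcal{Y})\rangle$-precover of an arbitrary object $\binom{A}{B}_\varphi\in(T\downarrow\mathscr{A})$ by combining special precovers in the two ambient categories through the functor $\mathbf{p}$. I would first pick a special $\mathcal{Y}$-precover $g:Y\twoheadrightarrow B$ with $K_Y:=\ker g\in\mathcal{Y}^\perp$, and then a special $\mathcal{X}$-precover $f:X\twoheadrightarrow A$ with $K_X:=\ker f\in\mathcal{X}^\perp$. The $\mathcal{Y}$-exactness of $T$ ensures that $0\to T(K_Y)\to T(Y)\to T(B)\to 0$ is exact, so that $\varphi\circ T(g):T(Y)\to A$ is a bona fide morphism to $A$ and one can assemble a candidate comma morphism
\[
(\alpha,g):\ \mathbf{p}(X,Y)\ \longrightarrow\ \binom{A}{B}_{\varphi},\qquad \alpha:=(f,\ \varphi T(g)),
\]
where $\mathbf{p}(X,Y)=\binom{X\oplus T(Y)}{Y}$ with the canonical second-factor inclusion. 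A direct check shows this is an epimorphism in $(T\downarrow\mathscr{A})$, and a further diagram chase, invoking $\mathcal{Y}$-exactness once more, identifies its kernel with an extension of $\mathbf{p}(0,K_Y)$ by $\binom{K_X}{0}_{0}$, which places it in $\langle\mathbf{p}(\mathcal{X},\mathcal{Y})\rangle^\perp$ via the explicit orthogonal-class description developed earlier in the paper.

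For the converse half of (1), any $A\in\mathscr{A}$ may be pushed into $(T\downarrow\mathscr{A})$ through the exact embedding $A\mapsto\binom{A}{0}_{0}$, and any $B\in\mathscr{B}$ through $B\mapsto\binom{T(B)}{B}_{1_{T(B)}}$. Applying the hypothesized special $\langle\mathbf{p}(\mathcal{X},\mathcal{Y})\rangle$-precover to these images and then reading off the appropriate coordinate should yield a special $\mathcal{X}$-precover of $A$, respectively a special $\mathcal{Y}$-precover of $B$. The conditions $T(\mathcal{Y}\cap\mathcal{Y}^\perp)\subseteq\mathcal{X}^\perp$ and extension-closedness of $\mathcal{X},\mathcal{Y}$ are exactly what one needs to guarantee that the coordinates of objects in $\langle\mathbf{p}(\mathcal{X},\mathcal{Y})\rangle$ land in $\mathcal{X}$ and $\mathcal{Y}$, and symmetrically that the coordinates of objects in $\langle\mathbf{p}(\mathcal{X},\mathcal{Y})\rangle^\perp$ land in $\mathcal{X}^\perp$ and $\mathcal{Y}^\perp$.

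Part (2) will follow by specializing (1) to $\mathcal{X}=\mathcal{GP}_\mathscr{A}$ and $\mathcal{Y}=\mathcal{GP}_\mathscr{B}$, once $\mathcal{GP}_{(T\downarrow\mathscr{A})}$ is identified with $\langle\mathbf{p}(\mathcal{GP}_\mathscr{A},\mathcal{GP}_\mathscr{B})\rangle$ up to direct summands. The compatibility hypothesis on $T$ is exactly what is needed to lift complete projective resolutions in $\mathscr{A}$ and $\mathscr{B}$ to complete $(T\downarrow\mathscr{A})$-projective resolutions through $\mathbf{p}$, so that $\mathbf{p}(\mathcal{GP}_\mathscr{A},\mathcal{GP}_\mathscr{B})\subseteq\mathcal{GP}_{(T\downarrow\mathscr{A})}$; the reverse extraction of Gorenstein projectivity of each coordinate from a Gorenstein projective comma object is standard once compatibility is in hand. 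Because $\mathcal{GP}_{(T\downarrow\mathscr{A})}$ is closed under direct summands, the replacement trick alluded to in the introduction (passing to summand-closures of $\mathcal{X}$ and $\mathcal{Y}$) then upgrades the $\langle\mathbf{p}(\cdots)\rangle$-precovers furnished by (1) to genuine special $\mathcal{GP}_{(T\downarrow\mathscr{A})}$-precovers.

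The principal obstacle is that kernels in $(T\downarrow\mathscr{A})$ are not computed componentwise when $T$ is only right exact, so verifying that $\ker(\alpha,g)$ above lies in $\langle\mathbf{p}(\mathcal{X},\mathcal{Y})\rangle^\perp$ forces one to use the $\mathcal{Y}$-exactness of $T$ precisely on the short exact sequence produced by the chosen $\mathcal{Y}$-precover; this tight coupling between the precover and the failure of left exactness of $T$ is the technical heart of the argument. A second, subtler difficulty appears in (2): since special precovering classes need not be closed under direct summands, the usual cotorsion-pair machinery of \cite[Lemma 2.2.6]{Gober} is unavailable, and aligning the direct-summand-closed class $\mathcal{GP}_{(T\downarrow\mathscr{A})}$ with $\langle\mathbf{p}(\mathcal{GP}_\mathscr{A},\mathcal{GP}_\mathscr{B})\rangle$ is precisely where the replacement trick has to do its work.
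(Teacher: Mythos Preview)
Your forward direction for (1) contains a genuine gap. The hypothesis ``$T$ is $\mathcal{Y}$-exact'' means that $T$ preserves exactness of short exact sequences whose \emph{cokernel} lies in $\mathcal{Y}$; it says nothing about sequences whose middle term lies in $\mathcal{Y}$. Thus your claim that $0\to T(K_Y)\to T(Y)\to T(B)\to 0$ is exact is unjustified, since $B$ is an arbitrary object of $\mathscr{B}$. More seriously, a direct computation shows that the $\mathscr{A}$-coordinate of $\ker(\alpha,g)$ sits in a short exact sequence
\[
0\longrightarrow K_X\longrightarrow \ker\alpha\longrightarrow T(Y)\longrightarrow 0,
\]
with quotient $T(Y)$, \emph{not} $T(K_Y)$; so the kernel is not an extension of $\mathbf{p}(0,K_Y)$ by $\binom{K_X}{0}$ as you assert. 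Since nothing forces $T(Y)\in\mathcal{X}^\perp$, there is no reason for $\ker\alpha$ to lie in $\mathcal{X}^\perp$, and your candidate epimorphism is not a special $\langle\mathbf{p}(\mathcal{X},\mathcal{Y})\rangle$-precover.

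The paper avoids this obstacle by an indirect route. It first passes to the summand closures $\mathsf{Smd}(\mathcal{X})$ and $\mathsf{Smd}(\mathcal{Y})$, which by Lemma~\ref{smd} generate complete cotorsion pairs, and then invokes Proposition~\ref{thm:cotorsion-pair1}. That proposition builds a special $\binom{\mathcal{X}^\perp}{\mathcal{Y}^\perp}$-\emph{preenvelope} of an arbitrary $\binom{A}{B}$ by starting from a special $\mathcal{Y}^\perp$-preenvelope $0\to B\to V\to Y\to 0$; here $Y$ sits in the cokernel position, so $\mathcal{Y}$-exactness legitimately applies. Completeness of the resulting cotorsion pair then yields special precovers, and a final modification step (adding suitable summands) brings the covering object from $\langle\mathbf{p}(\mathsf{Smd}(\mathcal{X}),\mathsf{Smd}(\mathcal{Y}))\rangle$ back into $\langle\mathbf{p}(\mathcal{X},\mathcal{Y})\rangle$ itself. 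The summand-closure trick you reserve for part~(2) is thus already essential in part~(1). Your outlines for the converse of (1) and for (2) are broadly in line with the paper's arguments.
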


We note that the condition $T$ is $\mathcal{Y}$-exact in Theorem \ref{thm:special precovering}(1) can not be omitted in general (see Remark \ref{rem:3.5}). As a consequence, we refine a main result obtained by Mao in \cite[Theorem 5.6(1)]{Mao} by deleting two superfluous assumptions (see Remark \ref{rem:3.7}).

The study of the existence of special Gorenstein  projective precovers has been subject of much research in recent years. So
far the existence of special Gorenstein projective precovers (of right modules) is known over a right coherent ring for which the projective dimension of any flat left module is finite (see \cite{EIO}). Examples of such rings include but are not limited to: Gorenstein rings (see \cite{EJ}), commutative noetherian rings of finite Krull dimension (see \cite{Murfet}), as well as two sided noetherian rings $R$ such that the injective dimension of $R$ (as a left $R$-module) is finite (see \cite{EIO}). But
for arbitrary rings this is still an open question. Work on this problem can be seen in \cite{BHG,EIK,EIO,Jorgensen,Murfet} for instance.

As a direct consequence of Theorem \ref{thm:special precovering}(2) and Proposition \ref{dim-com} below,  we have the following result which gives more examples of rings (not necessary coherent) such that the class of Gorenstein projective modules is special precovering over them.

\begin{cor}\label{GP precover}Let $\Lambda=\left(
                                                                                 \begin{smallmatrix}
                                                                                   R & M \\
                                                                                   0 & S \\
                                                                                 \end{smallmatrix}
                                                                               \right)
$ be a triangular matrix ring. Assume that $_{R}M$ and $M_{S}$ have finite projective dimension and finite flat dimension, respectively. Then $\mathcal{GP}(\Lambda)$ is special precovering  in $\Mod \Lambda$ if and only if  $\mathcal{GP}(R)$ and $\mathcal{GP}(S)$ are special precovering in $\Mod R$ and $\Mod S$, respectively.
\end{cor}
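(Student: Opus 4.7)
The plan is to reduce the corollary to a direct application of Theorem \ref{thm:special precovering}(2), using Proposition \ref{dim-com} as the bridge between the finiteness hypotheses on $M$ and the compatibility condition on the associated functor.

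First, I would recall the standard identification $\Mod \Lambda \simeq (T\downarrow \Mod R)$ for a triangular matrix ring, where $T:\Mod S \to \Mod R$ is the right exact functor $T = M\otimes_S -$. Under this identification, a left $\Lambda$-module corresponds to a triple $\left(\begin{smallmatrix}A\\ B\end{smallmatrix}\right)_{\varphi}$ with $A\in\Mod R$, $B\in\Mod S$ and $\varphi:M\otimes_S B\to A$ an $R$-linear structure map, and Gorenstein projective $\Lambda$-modules correspond exactly to objects in $\mathcal{GP}_{(T\downarrow \Mod R)}$.

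Next, I would invoke Proposition \ref{dim-com} to translate the hypotheses. The assumption that ${}_RM$ has finite projective dimension and $M_S$ has finite flat dimension is precisely what that proposition needs in order to conclude that $T = M\otimes_S -$ is a compatible functor in the sense used in Theorem \ref{thm:special precovering}(2). Intuitively, finite flat dimension of $M_S$ forces $\mathrm{Tor}^S_i(M,-)$ to vanish in high degrees so that $T$ preserves exactness of totally acyclic complexes of projective $S$-modules, while finite projective dimension of ${}_RM$ ensures that the images $T(Q^\bullet)$ remain suitably orthogonal to projective $R$-modules; together these give compatibility.

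Once compatibility of $T$ is in hand, Theorem \ref{thm:special precovering}(2) applies verbatim and yields the stated equivalence: $\mathcal{GP}(\Lambda) = \mathcal{GP}_{(T\downarrow \Mod R)}$ is special precovering in $\Mod \Lambda$ if and only if both $\mathcal{GP}(R) = \mathcal{GP}_{\Mod R}$ and $\mathcal{GP}(S) = \mathcal{GP}_{\Mod S}$ are special precovering in $\Mod R$ and $\Mod S$, respectively. The main (and essentially only) obstacle is the verification handled by Proposition \ref{dim-com}; everything else is a formal transfer along the comma-category description of $\Mod\Lambda$.
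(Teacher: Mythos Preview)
Your proposal is correct and follows exactly the approach indicated in the paper: the corollary is obtained as a direct consequence of Theorem~\ref{thm:special precovering}(2) together with Proposition~\ref{dim-com}, via the identification $\Mod\Lambda \simeq (T\downarrow \Mod R)$ for $T=M\otimes_S-$ from Example~\ref{e exact}(1). The only minor imprecision is your informal gloss on condition~(C2): finite $\mathrm{pd}_RM$ guarantees that ${\rm Hom}_R(P^\bullet, T(Q))$ is exact for every complete projective resolution $P^\bullet$ in $\Mod R$ and projective $Q$ in $\Mod S$, rather than a statement about $T(Q^\bullet)$; but this does not affect the argument, since you correctly defer to Proposition~\ref{dim-com}.
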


The contents of this paper are arranged as follows:
In Section 2, we study the homological behavior of the functor ${\bf p}$ from  product categories into comma categories. In Section 3, we first characterize when complete hereditary cotorsion pairs in abelian categories $\mathscr{A}$ and $\mathscr{B}$ can induce complete hereditary cotorsion pairs in $(T\downarrow\mathscr{A})$ (see Proposition \ref{thm:cotorsion-pair1}). This is based on the homological behavior of the functor ${\bf p}$  established in Section 2. As a result, we give the proof of Theorem \ref{thm:special precovering}(1).
In Section 4, we first give an explicit description for an arbitrary object in the comma category $(T\downarrow\mathscr{A})$ to be Gorenstein projective (see Proposition \ref{GP}), and then give the proof of Theorem \ref{thm:special precovering}(2).

In the following sections, we always assume that $\mathscr{A}$ and $\mathscr{B}$ are abelian categories, $\mathcal{X}$ is a subclass of $\mathscr{A}$ with $0\in{\mathcal{X}}$ and $\mathcal{Y}$ is a subclass of $\mathscr{B}$ with $0\in{\mathcal{Y}}$.

\section{The  functor {\bf p} and its homological behavior}
This section is devoted to preparations for proofs of our main results in this paper.
First, we introduce a functor ${\bf p}$ from the product category $\mathscr{A}\times\mathscr{B}$ into the comma category $(T\downarrow\mathscr{A})$, and then discuss the homological behavior of the functor ${\bf p}$, including the
property of the extension closure of some classes of objects in $(T\downarrow\mathscr{A})$, the exactness of the functor ${\bf p}$ and the
detailed description of orthogonal classes of a given class ${\bf p}(\mathcal{X},\mathcal{Y})$ in $(T\downarrow\mathscr{A})$.

\begin{df}\label{df:comma}\cite{FGR,Marmaridis}
Let $T:\mathscr{B}\rightarrow\mathscr{A}$ be a right exact functor. Then the \emph{comma category }$(T\downarrow\mathscr{A})$ is defined as follows:
 \begin{enumerate}
\item The objects are triples $\left(\begin{smallmatrix}A\\B\end{smallmatrix}\right)_{\varphi}$, with $A\in{\mathcal{\mathscr{A}}}$, $B\in{\mathcal{\mathscr{B}}}$ and $\varphi: T(B)\rightarrow A$ is a morphism in $\mathscr{A}$;

\item A morphism $\left(\begin{smallmatrix}a\\b\end{smallmatrix}\right):\left(\begin{smallmatrix}A\\B\end{smallmatrix}\right)_{\varphi}\rightarrow\left(\begin{smallmatrix}A'\\B' \end{smallmatrix}\right)_{\varphi'}$ is given by two morphisms $a:A\rightarrow A'$ in $\mathscr{A}$ and $b: B\rightarrow B'$ in $\mathscr{B}$ such that $\varphi' T(b)=a\varphi$.

 \end{enumerate}
 \indent If there is no possible confusion, we sometimes omit the morphism $\varphi$.
 \end{df}

Next we give some examples of comma categories.
\begin{exa}\label{e exact}{\rm
\begin{enumerate}
\item Let $R$ and $S$ be two rings, ${_R}M_S$ an $R$-$S$-bimodule, and $\Lambda=\left(
                                                                                 \begin{smallmatrix}R & M \\0 & S\\\end{smallmatrix}
                                                                               \right)
$ the triangular matrix ring. If we define $T\cong M\otimes_{S}-: \Mod S \ra \Mod R$, then we get that $\Mod {\Lambda}$ is equivalent to the comma category $(T\downarrow \Mod R)$.

\item Let $\Lambda=\left(\begin{smallmatrix}R & M \\0 & S\\\end{smallmatrix}\right)
$ be a triangular matrix ring.  If we define  {\rm$T\cong M\otimes_{S}-: \textrm{Ch}(S) \ra \textrm{Ch}(R)$}, then {\rm$\textrm{Ch}(\Lambda)$} is equivalent to the comma category $(T\downarrow {\rm\textrm{Ch}}(R))$.

\item If $\mathscr{A}=\mathscr{B}$ and $T$ is the identity functor, then the comma category $(T\downarrow \mathscr{A})$ coincides with the
morphism category ${\rm mor}(\mathscr{A})$ of $\mathscr{A}$.

\item Let  $\mathscr{A}=$Mod$R$ and $\mathscr{B}=$Ch$(R)$. If we define $e: \mathscr{B}\rightarrow\mathscr{A}$ via $C^\bullet\mapsto C^0$ for any $C^\bullet\in{\mathscr{B}}$, then $e$ is an exact functor and we have a comma category $(e\downarrow\mathscr{A})$.

 \end{enumerate}}\end{exa}

The following functor ${\bf p}$ was introduced by Mitchell in \cite[p.29]{Mitchell}, which is a particular case of the additive left Kan extension functor. Recently, it was used by Enochs, Cort\'{e}s-Izurdiaga and Torrecillas \cite{ECT} in the study of Gorenstein conditions over triangular
matrix rings.
\begin{df} \label{df:p-functor}Let $T:\mathscr{B}\rightarrow\mathscr{A}$ be a right exact functor. Then we have the following functor:
\begin{itemize}
\item ${\bf p}:\mathscr{A}\times\mathscr{B}\rightarrow(T\downarrow\mathscr{A})$ via
${\bf p}(A,B)=\left(\begin{smallmatrix}A\oplus T(B)\\ B\end{smallmatrix}\right)_{\left(\begin{smallmatrix}0\\ 1\end{smallmatrix}\right)}$ and ${\bf p}(a,b)=\left(\begin{smallmatrix}a\oplus T(b)\\ b\end{smallmatrix}\right)$, where $(A,B)$ is an object in $\mathscr{A}\times\mathscr{B}$ and $(a,b)$ is a morphism in $\mathscr{A}\times\mathscr{B}$.

\end{itemize}
\end{df}

\begin{rem}\label{functors}{\rm (1) Let $A$ be an object in $\mathscr{A}$ and $B$ an object in $\mathscr{B}$. It is trivial to obtain that  ${\bf p}(A,B)={\bf p}(A,0)\bigoplus{\bf p}(0,B)$. Moreover, $\textbf{p}$ preserves projective objects if $\mathscr{A}$ and $\mathscr{B}$ have enough projective objects.

(2) If we define ${\bf q}:(T\downarrow\mathscr{A})\rightarrow\mathscr{A}\times\mathscr{B}$ via
${\bf q}\left(\begin{smallmatrix}A\\B \end{smallmatrix}\right)=(A,B)$ and ${\bf q}\left(\begin{smallmatrix}a\\b \end{smallmatrix}\right)=(a,b)$ for any object $(\begin{smallmatrix}A\\B \end{smallmatrix})$ in $(T\downarrow\mathscr{A})$ and any morphism $\left(\begin{smallmatrix}a\\b \end{smallmatrix}\right)$ in $(T\downarrow\mathscr{A})$, then
\textbf{p} is a left adjoint of \textbf{q}. Hence $\textbf{p}$ is a right exact functor.}
\end{rem}

Recall that a class $\mathcal{L}$ of objects in an abelian category $\mathscr{D}$ is said to  be \emph{closed under extensions} if whenever $0\ra X\ra Y\ra Z\ra0$  is exact in $\mathscr{D}$ with $X,Z\in{\mathcal{L}}$, then $Y\in{\mathcal{L}}$. For convenience, we set
\begin{displaymath}
\mathfrak{B}^{\mathcal{X}}_{\mathcal{Y}}:=\{\left(\begin{smallmatrix}X\\Y\end{smallmatrix}\right)_{\varphi}\in (T\downarrow\mathscr{A}): Y\in\mathcal {Y}, \ \varphi \ \mbox{is monic and coker}\varphi\in\mathcal {X}\}.
\end{displaymath}
Recall from the introduction that
\begin{displaymath}
\langle{\bf p}(\mathcal {X},\mathcal {Y})\rangle:=\{\left(\begin{smallmatrix}A\\B\end{smallmatrix}\right):0\rightarrow\left(\begin{smallmatrix}X' \\Y'\end{smallmatrix}\right)\rightarrow \left(\begin{smallmatrix}A\\B\end{smallmatrix}\right)\rightarrow\left(\begin{smallmatrix}X\\ Y \end{smallmatrix}\right)\rightarrow0 \ \mbox{is exact} \ \mbox{with} \ \left(\begin{smallmatrix}X'\\ Y' \end{smallmatrix}\right),  \left(\begin{smallmatrix}X\\ Y \end{smallmatrix}\right)  \in  {\bf p}(\mathcal {X},\mathcal {Y})\},
\end{displaymath}
and the functor $T:\mathscr{B}\rightarrow\mathscr{A}$ is called \emph{$\mathcal {Y}$-exact} if $T$ preserves the exactness of the exact sequence $0\rightarrow B\rightarrow B'\rightarrow Y\rightarrow0$ in $\mathscr{B}$ with $Y\in\mathcal {Y}$.

\begin{prop}\label{thm:main1} Consider the following conditions:
\begin{enumerate}
 \item $\mathcal {X}$ and $\mathcal {Y}$ are closed under extensions in $\mathscr{A}$ and $\mathscr{B}$, respectively.
  \item $\langle{\bf p}(\mathcal {X},\mathcal {Y})\rangle=\mathfrak{B}^{\mathcal{X}}_{\mathcal{Y}}$ is the smallest subclass of ($T\downarrow\mathscr{A}$) containing ${\bf p}(\mathcal {X},\mathcal {Y})$ and closed under extensions.

 \item $\mathfrak{B}^{\mathcal{X}}_{\mathcal{Y}}$ is closed under extensions.
 \end{enumerate}
Then $(1)\Rightarrow (2)\Rightarrow (3)$. The converses hold if $T:\mathscr{B}\rightarrow\mathscr{A}$ is $\mathcal{Y}$-exact.
\end{prop}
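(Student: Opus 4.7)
The plan is to establish $(1) \Rightarrow (2) \Rightarrow (3)$ with no extra hypothesis on $T$, and then close the cycle by proving $(3) \Rightarrow (1)$ under the assumption that $T$ is $\mathcal{Y}$-exact. The implication $(2) \Rightarrow (3)$ is tautological, since (2) declares $\mathfrak{B}^{\mathcal{X}}_{\mathcal{Y}}$ to coincide with a class that, by construction, is closed under extensions.

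For $(1) \Rightarrow (2)$ I would proceed by a two-inclusion argument together with an independent verification that $\mathfrak{B}^{\mathcal{X}}_{\mathcal{Y}}$ is closed under extensions. The inclusion $\mathfrak{B}^{\mathcal{X}}_{\mathcal{Y}} \subseteq \langle{\bf p}(\mathcal{X},\mathcal{Y})\rangle$ is witnessed, for any $\left(\begin{smallmatrix}A\\B\end{smallmatrix}\right)_\varphi$ in $\mathfrak{B}^{\mathcal{X}}_{\mathcal{Y}}$, by the canonical short exact sequence $0 \to {\bf p}(0,B) \to \left(\begin{smallmatrix}A\\B\end{smallmatrix}\right)_\varphi \to {\bf p}(\coker\varphi, 0) \to 0$ whose $\mathscr{A}$-row is $0 \to T(B) \xrightarrow{\varphi} A \to \coker\varphi \to 0$ and whose $\mathscr{B}$-row is $0 \to B \xrightarrow{\mathrm{id}} B \to 0$. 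For the reverse inclusion $\langle{\bf p}(\mathcal{X},\mathcal{Y})\rangle \subseteq \mathfrak{B}^{\mathcal{X}}_{\mathcal{Y}}$, starting from a defining extension $0 \to {\bf p}(X',Y') \to \left(\begin{smallmatrix}A\\B\end{smallmatrix}\right)_\varphi \to {\bf p}(X,Y) \to 0$, extension closure of $\mathcal{Y}$ applied to the $\mathscr{B}$-row yields $B \in \mathcal{Y}$. I then apply the snake lemma to the commutative diagram whose top row is $T(Y') \to T(B) \to T(Y) \to 0$ (right exactness of $T$), whose bottom row is $0 \to X' \oplus T(Y') \to A \to X \oplus T(Y) \to 0$, and whose vertical maps are the canonical split inclusions $\binom{0}{1}$ flanking $\varphi$. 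Since the outer verticals have kernel $0$ and cokernels $X'$ and $X$, and the bottom-right map is surjective, the six-term sequence collapses to $0 \to \ker\varphi \to 0 \to X' \to \coker\varphi \to X \to 0$, forcing $\ker\varphi = 0$ and $\coker\varphi \in \mathcal{X}$ by extension closure. The same snake-lemma computation applied to an arbitrary extension $0 \to F_1 \to F \to F_2 \to 0$ with $F_i \in \mathfrak{B}^{\mathcal{X}}_{\mathcal{Y}}$ yields the extension closure of $\mathfrak{B}^{\mathcal{X}}_{\mathcal{Y}}$ itself, and minimality of $\langle{\bf p}(\mathcal{X},\mathcal{Y})\rangle$ among extension-closed classes containing ${\bf p}(\mathcal{X},\mathcal{Y})$ is immediate from its one-step defining form.

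For $(3) \Rightarrow (1)$, given $0 \to Y' \to Y \to Y'' \to 0$ in $\mathscr{B}$ with $Y', Y'' \in \mathcal{Y}$, the $\mathcal{Y}$-exactness of $T$ makes $0 \to {\bf p}(0,Y') \to {\bf p}(0,Y) \to {\bf p}(0,Y'') \to 0$ exact in $(T\downarrow\mathscr{A})$; the outer terms belong to $\mathfrak{B}^{\mathcal{X}}_{\mathcal{Y}}$, so (3) places the middle term there too, forcing $Y \in \mathcal{Y}$. For extension closure of $\mathcal{X}$, a parallel argument, requiring no hypothesis on $T$, uses $0 \to {\bf p}(X',0) \to {\bf p}(X,0) \to {\bf p}(X'',0) \to 0$, whose middle lying in $\mathfrak{B}^{\mathcal{X}}_{\mathcal{Y}}$ forces $X = \coker(0 \to X) \in \mathcal{X}$. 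The main obstacle will be the bookkeeping in the snake-lemma step of $(1) \Rightarrow (2)$: because $T$ is only right exact, the top row of the diagram is not short exact at the left, so one must verify carefully that the six-term sequence nevertheless collapses as claimed and in particular terminates with $X$ on the right; that this step goes through without $\mathcal{Y}$-exactness of $T$ is precisely what allows $(1) \Rightarrow (2) \Rightarrow (3)$ to hold unconditionally.
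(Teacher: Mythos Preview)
Your proof is correct and follows essentially the same route as the paper: the canonical sequence $0 \to {\bf p}(0,B) \to \left(\begin{smallmatrix}A\\B\end{smallmatrix}\right)_\varphi \to {\bf p}(\coker\varphi,0) \to 0$ for one inclusion, the snake lemma applied to the diagram with top row $T(Y') \to T(B) \to T(Y) \to 0$ for the other, and the ${\bf p}(-,0)$ and ${\bf p}(0,-)$ sequences for $(3)\Rightarrow(1)$ under $\mathcal{Y}$-exactness. Your explicit remark that the snake-lemma step needs only right exactness of $T$ (so that $(1)\Rightarrow(2)$ holds unconditionally) is a useful clarification the paper leaves implicit.
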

\begin{proof}
$(1)\Rightarrow (2)$. Let $\left(\begin{smallmatrix}A\\B\end{smallmatrix}\right)_{\varphi}\in\langle{\bf p}(\mathcal {X},\mathcal {Y})\rangle$. Then there is an exact sequence
$$0\rightarrow\left(\begin{smallmatrix}X'\oplus T(Y')\\Y'\end{smallmatrix}\right)\rightarrow \left(\begin{smallmatrix}A\\B\end{smallmatrix}\right)\rightarrow\left(\begin{smallmatrix}X\oplus T(Y)\\ Y \end{smallmatrix}\right)\rightarrow0 $$ in $(T\downarrow\mathscr{A})$ with $X, X'\in\mathcal{X}$ and $Y, Y'\in\mathcal{Y}$. Thus the sequence  $0\rightarrow Y'\rightarrow B\rightarrow Y\rightarrow0$ is exact in $\mathscr{B}$,  and hence we have the following commutative diagram with exact rows:
$$\xymatrix{ &T(Y')\ar[r]\ar[d]&T(B)\ar[r]\ar[d]^\varphi&T(Y)\ar[r]\ar[d]&0\\
    0\ar[r]&X'\oplus T(Y') \ar[r]&A\ar[r]&X\oplus T(Y) \ar[r]&0.}$$
Clearly, $\varphi$ is monic and  $B\in \mathcal{Y}$ since $\mathcal{Y}$ is closed under extensions. Moreover, by the Snake Lemma, we have an exact sequence $0\rightarrow X'\rightarrow{\rm coker}\varphi\rightarrow X\rightarrow0$ which implies ${\rm coker}\varphi\in \mathcal{X}$, as desired.

Conversely, assume that  $\left(\begin{smallmatrix}X\\Y\end{smallmatrix}\right)_{\varphi}$ is an object in $(T\downarrow\mathscr{A})$ such that $Y\in \mathcal{Y},\varphi \mbox{ is monic and coker}\varphi\in \mathcal{X}$. So $\left(\begin{smallmatrix}X\\Y\end{smallmatrix}\right)_{\varphi}\in\langle{\bf p}(\mathcal{X},\mathcal{Y})\rangle$ follows from the exact sequence $$0\rightarrow\left(\begin{smallmatrix}T(Y)\\Y \end{smallmatrix}\right)\rightarrow \left(\begin{smallmatrix}X\\Y\end{smallmatrix}\right)\rightarrow\left(\begin{smallmatrix}{\rm coker}\varphi \\0\end{smallmatrix}\right)\rightarrow0 $$ in $(T\downarrow\mathscr{A})$.

By the proof above, one can get that $\langle{\bf p}(\mathcal {X},\mathcal {Y})\rangle$ is closed under extensions. So $\langle{\bf p}(\mathcal {X},\mathcal {Y})\rangle$ is the smallest subclass of ($T\downarrow\mathscr{A}$) containing ${\bf p}(\mathcal {X},\mathcal {Y})$ and closed under extensions.

 $(2)\Rightarrow (3)$ is trivial.

  $(3)\Rightarrow (1)$.
 Let $0\ra A'\ra A\ra A''\ra0$ be an exact sequence in $\mathscr{A}$ with $A',A''\in{\mathcal{X}}$. Then $0\rightarrow\left(\begin{smallmatrix}A'\\0 \end{smallmatrix}\right)\rightarrow \left(\begin{smallmatrix}A\\0\end{smallmatrix}\right)\rightarrow\left(\begin{smallmatrix} A''\\0\end{smallmatrix}\right)\rightarrow0 $ is an exact sequence in $(T\downarrow\mathscr{A})$ with $\left(\begin{smallmatrix}A'\\0\end{smallmatrix}\right)$, $\left(\begin{smallmatrix}A''\\0\end{smallmatrix}\right)\in{ \mathfrak{B}^{\mathcal{X}}_{\mathcal{Y}}}$. Thus $\left(\begin{smallmatrix}A\\0\end{smallmatrix}\right)\in{ \mathfrak{B}^{\mathcal{X}}_{\mathcal{Y}}}$, and hence we get that $A\in{\mathcal{X}}$ by hypothesis. So $\mathcal{X}$ is closed under extensions, as desired. On the other hand, let $0\ra B'\ra B\ra B''\ra0$ be an exact sequence in $\mathscr{B}$ with $B',B''\in{\mathcal{Y}}$. Note that $T$ is $\mathcal{Y}$-exact by hypothesis. Then $0\rightarrow\left(\begin{smallmatrix}T(B')\\B' \end{smallmatrix}\right)\rightarrow \left(\begin{smallmatrix}T(B)\\B\end{smallmatrix}\right)\rightarrow\left(\begin{smallmatrix} T(B'')\\B''\end{smallmatrix}\right)\rightarrow0 $ is an exact sequence in $(T\downarrow\mathscr{A})$. Thus we can get that $\left(\begin{smallmatrix}T(B')\\B'\end{smallmatrix}\right)$, $\left(\begin{smallmatrix}T(B'')\\B''\end{smallmatrix}\right)\in \mathfrak{B}^{\mathcal{X}}_{\mathcal{Y}}$ by hypothesis. It follows that $\left(\begin{smallmatrix}T(B)\\B\end{smallmatrix}\right)\in  \mathfrak{B}^{\mathcal{X}}_{\mathcal{Y}}$, which implies that $B\in{\mathcal{Y}}$. This completes the proof.
\end{proof}

\begin{lem}\label{lem:2.2} Let $\Lambda=\left( \begin{smallmatrix} R & M \\0 & S \\\end{smallmatrix} \right)$ be a triangular matrix ring.
\begin{enumerate}
\item {\rm (\cite[Theorem 3.1]{HV2})} A left $\Lambda$-module $\left(\begin{smallmatrix}{X}\\  {Y} \end{smallmatrix}\right)_\varphi$ is a projective left $\Lambda$-module if and only if $Y$ is a projective left $S$-module and $\varphi:M\otimes_S Y\rightarrow X$ is an injective $R$-morphism with a projective cokernel.
\item {\rm (\cite[Proposition 1.14]{FGR})} A left $\Lambda$-module $\left(\begin{smallmatrix}{X}\\  {Y} \end{smallmatrix}\right)_\varphi$ is a flat left $\Lambda$-module if and only if $Y$ is a flat left $S$-module and $\varphi:M\otimes_S Y\rightarrow X$ is an injective $R$-morphism with a flat cokernel.
\end{enumerate}
\end{lem}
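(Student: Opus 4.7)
Both parts are recorded in the cited references, and the natural self-contained strategy passes through the functor $\mathbf{p}$ of Definition \ref{df:p-functor}. Under the equivalence $\Mod\Lambda \simeq (T \downarrow \Mod R)$ with $T = M \otimes_S -$ of Example \ref{e exact}(1), the regular module $_\Lambda\Lambda$ decomposes as $\left(\begin{smallmatrix}R\\0\end{smallmatrix}\right) \oplus \left(\begin{smallmatrix}M\\S\end{smallmatrix}\right)_{\mathrm{id}} = \mathbf{p}(R,0) \oplus \mathbf{p}(0,S)$, so every free $\Lambda$-module takes the canonical form $\mathbf{p}(R^{(I)}, S^{(J)})$, in which the structure morphism is the split inclusion into the direct sum with free cokernel $R^{(I)}$. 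This canonical form is the backbone of both characterisations.

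For (1), the ($\Leftarrow$) direction is the cleanest: given $\left(\begin{smallmatrix}X\\Y\end{smallmatrix}\right)_\varphi$ satisfying the hypotheses, the canonical short exact sequence
$$0 \to \mathbf{p}(0,Y) \to \left(\begin{smallmatrix}X\\Y\end{smallmatrix}\right)_\varphi \to \mathbf{p}(\operatorname{coker}\varphi,0) \to 0$$
has projective outer terms by Remark \ref{functors}(1), and projectivity of the quotient forces it to split, so the middle is projective. For ($\Rightarrow$), I would note that the exact forgetful functor $\left(\begin{smallmatrix}X\\Y\end{smallmatrix}\right)_\varphi \mapsto Y$ is left adjoint to the exact embedding $Z \mapsto \left(\begin{smallmatrix}0\\Z\end{smallmatrix}\right)_0$, hence preserves projectives, yielding $Y$ projective over $S$; then realising $\left(\begin{smallmatrix}X\\Y\end{smallmatrix}\right)_\varphi$ as a direct summand of a free $\Lambda$-module of the canonical form transfers both monicity of $\varphi$ and projectivity of its cokernel from the ambient free module to the summand.

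For (2) the strategy is parallel, but flat modules do not split off summands the way projectives do, so the ($\Leftarrow$) direction must instead exploit closure of flat $\Lambda$-modules under extensions: one verifies that $\mathbf{p}(0,Y)$ is flat when $Y$ is $S$-flat and $\mathbf{p}(C,0)$ is flat when $C$ is $R$-flat, then applies the closure property to the canonical SES above. For ($\Rightarrow$), I would invoke Lazard's theorem to present the flat $\Lambda$-module as a filtered colimit of finitely generated free $\Lambda$-modules of the canonical form, then use that filtered colimits in $\Mod R$ preserve monomorphisms and that cokernels commute with filtered colimits. I expect the main obstacle to be precisely this last verification --- showing that the $\varphi$ induced on the colimit remains injective with cokernel equal to the filtered colimit of the pointwise cokernels --- which is a question of compatibility between colimits in $(T\downarrow\mathscr A)$ and componentwise colimits in $\mathscr A$ and $\mathscr B$, standard but requiring care because $T = M\otimes_S -$ need not preserve arbitrary monomorphisms.
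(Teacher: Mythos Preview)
The paper does not prove this lemma at all: both parts are simply quoted from the cited references \cite{HV2} and \cite{FGR}, with no argument given. So there is nothing to compare against, and your proposal is not competing with a proof in the paper but rather supplying one.

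That said, your strategy is sound and self-contained within the paper's framework. For part~(1), the ($\Leftarrow$) direction via the canonical short exact sequence and Remark~\ref{functors}(1) is clean, and for ($\Rightarrow$) the adjunction argument for projectivity of $Y$ is correct (the right adjoint $Z\mapsto\left(\begin{smallmatrix}0\\Z\end{smallmatrix}\right)$ is exact), as is the direct-summand transfer for $\varphi$ and its cokernel. For part~(2), the Lazard approach works: since $T=M\otimes_S-$ is a left adjoint it commutes with filtered colimits, so colimits in $(T\downarrow\Mod R)$ are computed componentwise; exactness of filtered colimits in $\Mod R$ then gives injectivity of the limit $\varphi$, and cokernels always commute with colimits. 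The obstacle you flag is thus not a genuine one here. The one place you gloss over is the verification that $\mathbf{p}(0,Y)$ and $\mathbf{p}(C,0)$ are flat when $Y$ and $C$ are; this follows from the same Lazard argument (write $Y$ as a filtered colimit of finitely generated free modules and use that $\mathbf{p}$, being a left adjoint, preserves colimits), so it is not a gap but deserves a sentence.
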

For any ring $R$, the classes of projective and flat left $R$-modules will be denoted by $\mathcal{P}(R)$ and $\mathcal{F}(R)$, respectively. By Proposition \ref{thm:main1} and Lemma \ref{lem:2.2}, we have the following corollary.

\begin{cor}\label{corollary:3.5} Let $\Lambda=\left(
                                                                                 \begin{smallmatrix}
                                                                                   R & M \\
                                                                                   0 & S \\
                                                                                 \end{smallmatrix}
                                                                               \right)
$ be a triangular matrix ring. Then
\begin{enumerate}
\item $\mathcal{F}(\Lambda)\cong \langle{\bf p}({\mathcal{F}(R)},{\mathcal{F}(S)})\rangle$.
\item $\mathcal{P}(\Lambda)\cong \langle{\bf p}({\mathcal{P}(R)},{\mathcal{P}(S)})\rangle$.
\end{enumerate}
\end{cor}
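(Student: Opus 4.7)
The plan is to assemble this corollary directly from Example~\ref{e exact}(1), Lemma~\ref{lem:2.2}, and the implication $(1)\Rightarrow(2)$ of Proposition~\ref{thm:main1}. First I would invoke Example~\ref{e exact}(1) to replace $\Mod\Lambda$ by the comma category $(T\downarrow\Mod R)$, where $T=M\otimes_S-\colon\Mod S\to\Mod R$. Under this equivalence, a left $\Lambda$-module is literally a triple $\left(\begin{smallmatrix}X\\Y\end{smallmatrix}\right)_\varphi$ with $\varphi\colon M\otimes_S Y\to X$.

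Next, I would read Lemma~\ref{lem:2.2} as the identifications
\[
\mathcal{P}(\Lambda)\cong\mathfrak{B}^{\mathcal{P}(R)}_{\mathcal{P}(S)},\qquad
\mathcal{F}(\Lambda)\cong\mathfrak{B}^{\mathcal{F}(R)}_{\mathcal{F}(S)},
\]
since the conditions characterizing projective (respectively flat) $\Lambda$-modules in the lemma are exactly the defining conditions of the classes $\mathfrak{B}^{\mathcal{X}}_{\mathcal{Y}}$ introduced just before Proposition~\ref{thm:main1}.

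Then I would verify the hypothesis needed to apply Proposition~\ref{thm:main1}, namely that $\mathcal{X}$ and $\mathcal{Y}$ are closed under extensions in the respective module categories. For projectives this is immediate, as any short exact sequence with projective ends splits; for flats it is a standard fact. With this in hand, the implication $(1)\Rightarrow(2)$ of Proposition~\ref{thm:main1} gives
\[
\langle{\bf p}(\mathcal{F}(R),\mathcal{F}(S))\rangle=\mathfrak{B}^{\mathcal{F}(R)}_{\mathcal{F}(S)},\qquad
\langle{\bf p}(\mathcal{P}(R),\mathcal{P}(S))\rangle=\mathfrak{B}^{\mathcal{P}(R)}_{\mathcal{P}(S)},
\]
and combining with the previous step yields both isomorphisms of the corollary.

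There is really no serious obstacle here; the corollary is a direct packaging of previous results. The only subtlety worth double-checking is that the implication $(1)\Rightarrow(2)$ of Proposition~\ref{thm:main1} does not require $T$ to be $\mathcal{Y}$-exact (it is only the converses that do), so that no separate $\mathrm{Tor}$-vanishing verification is needed for $M\otimes_S-$ on $\mathcal{P}(S)$ or $\mathcal{F}(S)$. Once that is noted, the proof reduces to one line citing Example~\ref{e exact}(1), Lemma~\ref{lem:2.2}, and Proposition~\ref{thm:main1}.
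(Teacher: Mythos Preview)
Your proposal is correct and matches the paper's own approach exactly: the corollary is stated immediately after Lemma~\ref{lem:2.2} with the single remark that it follows from Proposition~\ref{thm:main1} and Lemma~\ref{lem:2.2}. Your observation that only the implication $(1)\Rightarrow(2)$ of Proposition~\ref{thm:main1} is needed, and hence no $\mathcal{Y}$-exactness hypothesis on $T=M\otimes_S-$ is required, is precisely the point that makes the one-line citation work.
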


Let ${_R}M_S$ be a finitely generated $R$-$S$-bimodule over a pair of Artin algebras $R$ and $S$, and $\Lambda=\left(
                                                                                 \begin{smallmatrix}
                                                                                   R & M \\
                                                                                   0 & S \\
                                                                                 \end{smallmatrix}
                                                                               \right)
$ the triangular matrix algebra. Recall from \cite{XZZ}
that the \emph{monomorphism category} $\mathcal{M}(R, M, S)$ induced by a bimodule ${_R}M_S$ is the subcategory of finitely generated left $\Lambda$-modules consisting of $\left(\begin{smallmatrix}{X}\\  {Y} \end{smallmatrix}\right)_\varphi$ such that $\varphi:M\otimes_S Y\rightarrow X$ is an injective $R$-morphism. When ${_R}M_S$ =${_R}R_R$, it is the classical submodule category $\mathscr{S}(R)$ in \cite{RS1,RS2,RS3}.

\begin{cor} \label{corollary:3.6} Let $R$ and $S$ be Artin algebras, and $\Lambda=\left(
                                                                                 \begin{smallmatrix}
                                                                                   R & M \\
                                                                                   0 & S \\
                                                                                 \end{smallmatrix}
                                                                               \right)
$ a triangular matrix algebra. Then $\mathcal{M}(R, M, S)\cong \langle{\bf p}({\rm mod}R, {\rm mod}S)\rangle$.
\end{cor}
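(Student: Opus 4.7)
The plan is to reduce the statement to Proposition \ref{thm:main1} after transferring the problem from $\Mod\Lambda$ to the comma category. First, by Example \ref{e exact}(1) there is an equivalence of abelian categories $\Mod\Lambda \cong (T\downarrow \Mod R)$ for $T = M\otimes_S -$, under which a $\Lambda$-module is encoded by a triple $\left(\begin{smallmatrix}X\\Y\end{smallmatrix}\right)_{\varphi}$ with $X\in \Mod R$, $Y\in \Mod S$ and $\varphi\colon M\otimes_S Y\to X$ in $\Mod R$. I would first verify that, under this equivalence, the finitely generated $\Lambda$-modules correspond precisely to those triples with $X\in{\rm mod}\,R$ and $Y\in{\rm mod}\,S$; this is a short check using that $\Lambda$ is Artin, so in particular ${}_{R}M$ is finitely generated and hence $M\otimes_S Y\in{\rm mod}\,R$ whenever $Y\in{\rm mod}\,S$.

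Next, I would invoke the implication $(1)\Rightarrow(2)$ of Proposition \ref{thm:main1} with $\mathcal X = {\rm mod}\,R$ and $\mathcal Y = {\rm mod}\,S$. Both classes are evidently closed under extensions in $\Mod R$ and $\Mod S$ respectively, so the proposition yields
\[
\langle {\bf p}({\rm mod}\,R,{\rm mod}\,S)\rangle \;=\; \mathfrak{B}^{{\rm mod}\,R}_{{\rm mod}\,S},
\]
that is, the class of triples $\left(\begin{smallmatrix}X\\Y\end{smallmatrix}\right)_{\varphi}$ with $Y\in{\rm mod}\,S$, $\varphi$ monic, and ${\rm coker}\,\varphi\in{\rm mod}\,R$. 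Note that this direction of Proposition \ref{thm:main1} does not require $T$ to be $\mathcal Y$-exact, so no flatness assumption on ${}_{R}M_{S}$ is needed here.

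Finally, I would identify this class with $\mathcal M(R,M,S)$, which by definition consists of finitely generated $\Lambda$-modules $\left(\begin{smallmatrix}X\\Y\end{smallmatrix}\right)_{\varphi}$ with $\varphi$ monic. For the inclusion $\mathfrak{B}^{{\rm mod}\,R}_{{\rm mod}\,S}\subseteq \mathcal M(R,M,S)$, one uses the short exact sequence
\[
0 \longrightarrow M\otimes_S Y \longrightarrow X \longrightarrow {\rm coker}\,\varphi \longrightarrow 0
\]
in $\Mod R$: since $Y\in{\rm mod}\,S$ and ${}_{R}M$ is finitely generated, $M\otimes_S Y\in{\rm mod}\,R$, and together with ${\rm coker}\,\varphi\in{\rm mod}\,R$ this forces $X\in{\rm mod}\,R$, so the triple is finitely generated as a $\Lambda$-module. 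For the reverse inclusion, any $\left(\begin{smallmatrix}X\\Y\end{smallmatrix}\right)_{\varphi}\in \mathcal M(R,M,S)$ has $X\in{\rm mod}\,R$, hence ${\rm coker}\,\varphi$, being a quotient of $X$, also lies in ${\rm mod}\,R$. Morphisms match automatically since both $\mathcal M(R,M,S)$ and $\langle{\bf p}({\rm mod}\,R,{\rm mod}\,S)\rangle$ are full subcategories of the equivalent categories $\mathrm{mod}\,\Lambda$ and $(T\downarrow\Mod R)$.

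There is no real obstacle: the entire content of the corollary is packaged into Proposition \ref{thm:main1}, and the only delicate point is the finite-generation bookkeeping across the equivalence $\Mod\Lambda \cong (T\downarrow\Mod R)$, which uses that $\Lambda$ being an Artin algebra makes $M$ finitely generated on both sides.
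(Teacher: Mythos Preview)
Your proposal is correct and follows essentially the same approach the paper intends: the corollary is stated without proof in the paper, but the implicit argument is precisely an application of Proposition~\ref{thm:main1} with $\mathcal{X}={\rm mod}\,R$ and $\mathcal{Y}={\rm mod}\,S$, identifying $\langle{\bf p}({\rm mod}\,R,{\rm mod}\,S)\rangle=\mathfrak{B}^{{\rm mod}\,R}_{{\rm mod}\,S}$ with the monomorphism category. The only cosmetic difference is that the paper works directly in the comma category $(T\downarrow{\rm mod}\,R)$ with $T:{\rm mod}\,S\to{\rm mod}\,R$ (recall $_RM_S$ is finitely generated since $\Lambda$ is Artin), so that $\mathcal{X}=\mathscr{A}$ and $\mathcal{Y}=\mathscr{B}$ and the finite-generation bookkeeping you carry out becomes automatic; your choice to work in $(T\downarrow\Mod R)$ and then restrict is equally valid and your extra verifications are correct.
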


The following proposition characterizes when the functor ${\bf p}$ is exact.

\begin{prop}\label{prop:3.6} The following are true for any right exact functor $T:\mathscr{B}\rightarrow\mathscr{A}$:
 \begin{enumerate}
\item $T$ is $\mathcal {Y}$-exact if and only if two exact sequences $0\rightarrow A'\rightarrow A\rightarrow A''\rightarrow0$ in $\mathscr{A}$ and $0\rightarrow B'\rightarrow B\rightarrow Y\rightarrow0$ in $\mathscr{B}$ with $Y\in{\mathcal{Y}}$ induce an exact sequence $0\rightarrow{\bf p}(A',B')\rightarrow{\bf p}(A,B)\rightarrow{\bf p}(A'',Y)\rightarrow0 $ in $(T\downarrow\mathscr{A})$.

 \item $T$ is exact if and only if ${\bf p}:\mathscr{A}\times\mathscr{B}\rightarrow(T\downarrow\mathscr{A})$ is exact.
 \end{enumerate}
\end{prop}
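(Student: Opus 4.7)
The plan is to exploit the explicit formula $\mathbf{p}(A,B)=\left(\begin{smallmatrix}A\oplus T(B)\\ B\end{smallmatrix}\right)_{\left(\begin{smallmatrix}0\\1\end{smallmatrix}\right)}$ together with the standard fact that kernels and cokernels in $(T\downarrow\mathscr{A})$ are computed componentwise in $\mathscr{A}$ and $\mathscr{B}$; hence a short sequence in $(T\downarrow\mathscr{A})$ is exact if and only if both of its component sequences are exact in the respective categories.

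For part (1), the $\mathscr{B}$-component of the candidate sequence $0\to\mathbf{p}(A',B')\to\mathbf{p}(A,B)\to\mathbf{p}(A'',Y)\to 0$ is exactly $0\to B'\to B\to Y\to 0$, which is exact by hypothesis, while the $\mathscr{A}$-component is the direct sum $0\to A'\oplus T(B')\to A\oplus T(B)\to A''\oplus T(Y)\to 0$. Since $0\to A'\to A\to A''\to 0$ is exact by hypothesis and $T$ is right exact, this direct sum is exact if and only if the arrow $T(B')\to T(B)$ is monic, which is exactly the $\mathcal{Y}$-exactness of $T$. This handles the \emph{if} direction. For the \emph{only if} direction, I would specialize to $A=A'=A''=0$; the $\mathscr{A}$-component of the (assumed exact) induced sequence then collapses to $0\to T(B')\to T(B)\to T(Y)\to 0$, giving $\mathcal{Y}$-exactness.

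For part (2), note that a short exact sequence in the product category $\mathscr{A}\times\mathscr{B}$ is precisely a pair consisting of a short exact sequence in $\mathscr{A}$ and one in $\mathscr{B}$. So exactness of $\mathbf{p}$ means that for every such pair the induced sequence in $(T\downarrow\mathscr{A})$ is exact. Taking $\mathcal{Y}=\mathscr{B}$ in part (1), this is equivalent to $T$ being $\mathscr{B}$-exact, i.e., exact. The implication $T$ exact $\Rightarrow$ $\mathbf{p}$ exact then follows from part (1), and the converse follows by the same $A=A'=A''=0$ specialization applied to an arbitrary short exact sequence $0\to B'\to B\to B''\to 0$ in $\mathscr{B}$.

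The only delicate point is being explicit about componentwise exactness in $(T\downarrow\mathscr{A})$ and the commutativity of the squares involving $\left(\begin{smallmatrix}0\\1\end{smallmatrix}\right)$; these are essentially formal and should not pose a real obstacle.
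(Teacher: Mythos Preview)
Your proposal is correct and follows essentially the same approach as the paper: both argue via the componentwise description of exactness in $(T\downarrow\mathscr{A})$, reduce the $\mathscr{A}$-component to the exactness of $0\to T(B')\to T(B)\to T(Y)\to 0$, and obtain the converse by specializing to $A'=A=A''=0$; both then deduce (2) directly from (1). The paper's proof is simply terser, taking the componentwise exactness for granted rather than spelling it out.
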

\begin{proof} We only need to prove (1), because (2) is a direct consequence of (1). Assume that $T:\mathscr{B}\rightarrow\mathscr{A}$ is $\mathcal {Y}$-exact. Let $0\rightarrow A'\rightarrow A\rightarrow A''\rightarrow0$ be an exact sequence in $\mathscr{A}$ and $0\rightarrow B'\rightarrow B\rightarrow Y\rightarrow0$ an exact sequence in $\mathscr{B}$ with $Y\in{\mathcal{Y}}$. Then $0\rightarrow T(B')\rightarrow T(B)\rightarrow T(Y)\rightarrow0$ is an exact sequence in $\mathscr{A}$. So the sequence $0\rightarrow{\bf p}(A',B')\rightarrow{\bf p}(A,B)\rightarrow{\bf p}(A'',Y)\rightarrow0 $ is exact in $(T\downarrow\mathscr{A})$.

Conversely, let $0\rightarrow B'\rightarrow B\rightarrow Y\rightarrow0$ be an exact sequence in $\mathscr{B}$ with $Y\in\mathcal {Y}$. Note that $0\rightarrow{\bf p}(0,B')\rightarrow{\bf p}(0,B)\rightarrow{\bf p}(0,Y)\rightarrow0 $ is an exact sequence in $(T\downarrow\mathscr{A})$ by hypothesis. So $0\rightarrow T(B')\rightarrow T(B)\rightarrow T(Y)\rightarrow0$ is an exact sequence in $\mathscr{A}$. This completes the proof.
\end{proof}

\begin{prop}\label{thm:main3} If $T:\mathscr{B}\rightarrow\mathscr{A}$ is $\mathcal {Y}$-exact, then  $\langle{\bf p}(\mathcal {X},\mathcal {Y})\rangle^{\bot}=\left(\begin{smallmatrix}\mathcal{X}^{\bot}\\\mathcal {Y}^{\bot}\end{smallmatrix}\right)$ holds in the category $(T\downarrow\mathscr{A})$.
\end{prop}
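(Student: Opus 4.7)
The plan is to prove both inclusions of $\langle{\bf p}(\mathcal {X},\mathcal {Y})\rangle^{\bot}=\left(\begin{smallmatrix}\mathcal{X}^{\bot}\\\mathcal {Y}^{\bot}\end{smallmatrix}\right)$ by reducing Ext-computations in $(T\downarrow\mathscr{A})$ to Ext-computations in the factor categories $\mathscr{A}$ and $\mathscr{B}$. The key structural facts are that ${\bf p}(X,Y) = {\bf p}(X,0) \oplus {\bf p}(0,Y)$ (Remark \ref{functors}(1)), and that the class $\langle{\bf p}(\mathcal{X},\mathcal{Y})\rangle$ is built from objects of ${\bf p}(\mathcal{X},\mathcal{Y})$ via a single short exact sequence; the latter implies that an Ext-long-exact-sequence argument reduces $\mbox{Ext}^{1}_{(T\downarrow\mathscr{A})}(\langle{\bf p}(\mathcal{X},\mathcal{Y})\rangle,-)$ to $\mbox{Ext}^{1}_{(T\downarrow\mathscr{A})}({\bf p}(\mathcal{X},\mathcal{Y}),-)$.

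For $(\supseteq)$, take $\left(\begin{smallmatrix}A\\B\end{smallmatrix}\right)_\varphi$ with $A\in\mathcal{X}^{\bot}$, $B\in\mathcal{Y}^{\bot}$, and an arbitrary extension $0\to\left(\begin{smallmatrix}A\\B\end{smallmatrix}\right)_{\varphi}\to\left(\begin{smallmatrix}C\\B'\end{smallmatrix}\right)_\psi\to{\bf p}(X,0)\oplus{\bf p}(0,Y)\to 0$. The summand ${\bf p}(X,0)=\left(\begin{smallmatrix}X\\0\end{smallmatrix}\right)$ forces the $\mathscr{B}$-component to split trivially, and $A\in\mathcal{X}^\bot$ yields a section $X\to C$ of the $\mathscr{A}$-row, which lifts to a morphism $\left(\begin{smallmatrix}X\\0\end{smallmatrix}\right)\to\left(\begin{smallmatrix}C\\B\end{smallmatrix}\right)_\psi$ automatically (the compatibility condition is vacuous since $T(0)=0$). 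For the summand ${\bf p}(0,Y)=\left(\begin{smallmatrix}T(Y)\\Y\end{smallmatrix}\right)_{1}$, the $\mathscr{B}$-row is an extension of $Y\in\mathcal{Y}$ by $B\in\mathcal{Y}^\bot$, so it splits by a map $s\colon Y\to B'$; applying $T$ and using that $T$ is $\mathcal{Y}$-exact guarantees the $\mathscr{A}$-row is exact and that $\psi\circ T(s)$ provides a section of the $\mathscr{A}$-row compatible with $s$. This produces a splitting morphism $\left(\begin{smallmatrix}\psi T(s)\\s\end{smallmatrix}\right)$ of the extension in $(T\downarrow\mathscr{A})$. Hence the Ext vanishes on both summands, and so on ${\bf p}(\mathcal{X},\mathcal{Y})$, and then on $\langle{\bf p}(\mathcal{X},\mathcal{Y})\rangle$ by the extension-closure observation above.

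For $(\subseteq)$, suppose $\left(\begin{smallmatrix}A\\B\end{smallmatrix}\right)_\varphi\in\langle{\bf p}(\mathcal{X},\mathcal{Y})\rangle^{\bot}$. To show $A\in\mathcal{X}^{\bot}$, take any exact sequence $0\to A\to C\to X\to 0$ in $\mathscr{A}$ with $X\in\mathcal{X}$ and lift it to the extension $0\to\left(\begin{smallmatrix}A\\B\end{smallmatrix}\right)_{\varphi}\to\left(\begin{smallmatrix}C\\B\end{smallmatrix}\right)_{\iota\varphi}\to\left(\begin{smallmatrix}X\\0\end{smallmatrix}\right)\to 0$ by ${\bf p}(X,0)\in{\bf p}(\mathcal{X},\mathcal{Y})$; a splitting in $(T\downarrow\mathscr{A})$ directly yields a splitting of the $\mathscr{A}$-row. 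For $B\in\mathcal{Y}^{\bot}$, start with $0\to B\to B'\to Y\to 0$ in $\mathscr{B}$ with $Y\in\mathcal{Y}$; $\mathcal{Y}$-exactness of $T$ gives $0\to T(B)\to T(B')\to T(Y)\to 0$ exact. Form the pushout $C$ of $\varphi\colon T(B)\to A$ and $T(B)\to T(B')$ to build the induced extension
\[
0\to \left(\begin{smallmatrix}A\\B\end{smallmatrix}\right)_{\varphi}\to \left(\begin{smallmatrix}C\\B'\end{smallmatrix}\right)_{\beta}\to \left(\begin{smallmatrix}T(Y)\\Y\end{smallmatrix}\right)_{1}\to 0
\]
in $(T\downarrow\mathscr{A})$ by ${\bf p}(0,Y)\in{\bf p}(\mathcal{X},\mathcal{Y})$; a splitting projects onto a splitting of the original extension in $\mathscr{B}$.

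The main obstacle is the second half of the $(\subseteq)$ direction: we must promote an arbitrary extension in $\mathscr{B}$ by an object of $\mathcal{Y}$ to an honest extension in the comma category. Constructing the pushout requires exactness of $0\to T(B)\to T(B')\to T(Y)\to 0$ and checking that the resulting triple is a genuine short exact sequence of triples, which is precisely where the $\mathcal{Y}$-exactness hypothesis on $T$ is indispensable.
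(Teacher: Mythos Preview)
Your proof is correct and follows essentially the same approach as the paper: both reduce $\langle{\bf p}(\mathcal{X},\mathcal{Y})\rangle^{\bot}$ to ${\bf p}(\mathcal{X},0)^{\bot}\cap{\bf p}(0,\mathcal{Y})^{\bot}$ via the decomposition ${\bf p}(X,Y)={\bf p}(X,0)\oplus{\bf p}(0,Y)$ and the extension-closure observation, handle ${\bf p}(X,0)$ by lifting sections along the $\mathscr{A}$-row, and handle ${\bf p}(0,Y)$ via the pushout construction (which is precisely where $\mathcal{Y}$-exactness enters). One minor remark: your invocation of $\mathcal{Y}$-exactness in the $(\supseteq)$ direction is superfluous---the $\mathscr{A}$-row of a short exact sequence already given in $(T\downarrow\mathscr{A})$ is exact by definition, and the paper checks directly that $\psi\circ T(s)$ is a section using only functoriality of $T$.
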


\begin{proof} In the sequel, we need the following identities $$\langle{\bf p}(\mathcal {X},\mathcal {Y})\rangle^{\bot}={\bf p}(\mathcal {X},\mathcal {Y})^{\bot}={\bf p}(\mathcal {X},0)^{\bot}\bigcap{\bf p}(0,\mathcal {Y})^{\bot},$$ which hold by Remarks \ref{functors}(1).

At first, we claim that $\left(\begin{smallmatrix}\mathcal{X}^{\bot}\\ \mathcal {Y}^{\bot} \end{smallmatrix}\right)\subseteq{\bf p}(\mathcal {X},\mathcal {Y})^{\bot}$. Let $\left(\begin{smallmatrix}A\\B \end{smallmatrix}\right)\in \left(\begin{smallmatrix}\mathcal{X}^{\bot}\\ \mathcal {Y}^{\bot} \end{smallmatrix}\right)$, it is sufficient to show the following exact sequences $$\zeta:0\rightarrow\left(\begin{smallmatrix}A\\B \end{smallmatrix}\right){\rightarrow} \left(\begin{smallmatrix}M\\N\end{smallmatrix}\right)_\varphi\stackrel{{\tiny\left(\begin{smallmatrix}m\\n\end{smallmatrix}\right)}}
{\rightarrow}\left(\begin{smallmatrix}T(Y)\\Y\end{smallmatrix}\right)\rightarrow0$$
$$\xi:0\rightarrow\left(\begin{smallmatrix}A\\B \end{smallmatrix}\right){\rightarrow} \left(\begin{smallmatrix}D\\B\end{smallmatrix}\right){\rightarrow}\left(\begin{smallmatrix}X\\0\end{smallmatrix}\right)\rightarrow0$$ are split for any $X\in{\mathcal{X}}$ and $Y\in{\mathcal{Y}}$. In fact, for $\zeta$, we have $g:Y\rightarrow N$ such that $ng=1_Y$ by hypothesis. It follows that $m (\varphi T(g))=T(n) T(g)=T(ng)=1_{T(Y)}$. That is $\left(\begin{smallmatrix}m\\n\end{smallmatrix}\right)\left(\begin{smallmatrix}\varphi T(g)\\g\end{smallmatrix}\right)=1$  which implies that the sequence $\zeta$ is split, as desired. And $\xi=0$ since the exact sequence $0\rightarrow A\rightarrow D\rightarrow X\rightarrow0$ is split.

For the reverse containment $\langle{\bf p}(\mathcal {X},\mathcal {Y})\rangle^{\bot} \subseteq \left(\begin{smallmatrix}\mathcal{X}^{\bot}\\ \mathcal {Y}^{\bot} \end{smallmatrix}\right)$, we only need to show that ${\bf p}(\mathcal {X},0)^{\bot}\subseteq\left(\begin{smallmatrix}\mathcal {X}^{\bot}\\\mathscr{B} \end{smallmatrix}\right)$ and ${\bf p}(0,\mathcal {Y})^{\bot}\subseteq\left(\begin{smallmatrix}\mathscr{A}\\\mathcal {Y}^{\bot}\end{smallmatrix}\right)$.
Let $\left(\begin{smallmatrix}A\\B\end{smallmatrix}\right)$ be an object in ${\bf p}(\mathcal {X},0)^{\bot}$. Assume that $\varepsilon:0\rightarrow A\rightarrow D\rightarrow X\rightarrow0$ is an exact sequence in $\mathscr{A}$ with $X\in\mathcal{X}$. Then the following exact sequence $$0\rightarrow\left(\begin{smallmatrix}A\\B \end{smallmatrix}\right){\rightarrow} \left(\begin{smallmatrix}D\\B\end{smallmatrix}\right)\rightarrow\left(\begin{smallmatrix}X\\0\end{smallmatrix}\right)\rightarrow0$$
 is split in $(T\downarrow\mathscr{A})$ by hypothesis. It follows that the sequence $\varepsilon$ is split, and hence $A\in {\mathcal{X}}^{\bot}$. So ${\bf p}(\mathcal {X},0)^{\bot}\subseteq\left(\begin{smallmatrix}\mathcal {X}^{\bot}\\\mathscr{B} \end{smallmatrix}\right)$, as desired.

 On the other hand, let $\left(\begin{smallmatrix}A\\B\end{smallmatrix}\right)$ be an object in ${\bf p}(0,\mathcal {Y})^{\bot}$ and $\varsigma:0\rightarrow B\rightarrow N\rightarrow Y\rightarrow0$ an exact sequence in $\mathscr{B}$ with $Y\in\mathcal{Y}$. Note that $T$ is $\mathcal {Y}$-exact. It follows that $0\rightarrow T(B)\rightarrow T(N)\rightarrow T(Y)\rightarrow0$ is an exact sequence in $\mathscr{A}$. Consider the following pushout diagram:
 $$\xymatrix{0\ar[r]&T(B)\ar[r]\ar[d]&T(N)\ar[r]\ar[d]&T(Y)\ar[r]\ar@{=}[d]&0\\
    0\ar[r]&A \ar[r]&L\ar[r]&T(Y) \ar[r]&0.}$$
 Thus we have an exact sequence $\varrho:0\rightarrow\left(\begin{smallmatrix}A\\B \end{smallmatrix}\right){\rightarrow} \left(\begin{smallmatrix}L\\N\end{smallmatrix}\right)\rightarrow\left(\begin{smallmatrix}T(Y)\\Y\end{smallmatrix}\right)\rightarrow0$ in $(T\downarrow\mathscr{A})$. Hence the sequence $\varrho$ is split by hypothesis, and so the sequence $\varsigma$ is split, i.e. $B\in\mathcal{Y}^{\bot}$. Consequently, ${\bf p}(0,\mathcal {Y})^{\bot}\subseteq\left(\begin{smallmatrix}\mathscr{A}\\\mathcal {Y}^{\bot}\end{smallmatrix}\right)$. This completes the proof.
\end{proof}

\begin{lem}\label{orth5} If $\mathscr{A}$ has enough injective objects, then $\langle{\bf p}({\mathscr {A}},{\mathscr {B}})\rangle={^{\bot}\left(\begin{smallmatrix}\mathcal {I}\\0 \end{smallmatrix}\right)}$, where $\mathcal {I}$ is the class of injective objects in $\mathscr{A}$.
\end{lem}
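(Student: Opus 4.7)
The plan is to combine Proposition \ref{thm:main1} with a direct analysis of extensions against objects of the form $\left(\begin{smallmatrix}I\\0\end{smallmatrix}\right)$. First I would apply Proposition \ref{thm:main1}(1)$\Rightarrow$(2) with $\mathcal{X}=\mathscr{A}$ and $\mathcal{Y}=\mathscr{B}$ (both trivially closed under extensions, so no $\mathcal{Y}$-exactness is needed in this direction) to identify
\[
\langle{\bf p}(\mathscr{A},\mathscr{B})\rangle=\mathfrak{B}^{\mathscr{A}}_{\mathscr{B}}=\Bigl\{\bigl(\begin{smallmatrix}A\\B\end{smallmatrix}\bigr)_{\varphi}\in(T\downarrow\mathscr{A}):\varphi\text{ is monic}\Bigr\}.
\]
With this reformulation the lemma reduces to the equivalence: $\varphi\colon T(B)\to A$ is monic if and only if $\operatorname{Ext}^1\bigl(\bigl(\begin{smallmatrix}A\\B\end{smallmatrix}\bigr)_{\varphi},\bigl(\begin{smallmatrix}I\\0\end{smallmatrix}\bigr)\bigr)=0$ for every injective $I\in\mathscr{A}$.

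For the inclusion $\langle{\bf p}(\mathscr{A},\mathscr{B})\rangle\subseteq{}^{\bot}\bigl(\begin{smallmatrix}\mathcal{I}\\0\end{smallmatrix}\bigr)$, I would take an arbitrary extension
\[
0\to\bigl(\begin{smallmatrix}I\\0\end{smallmatrix}\bigr)\to\bigl(\begin{smallmatrix}M\\N\end{smallmatrix}\bigr)_{\psi}\to\bigl(\begin{smallmatrix}A\\B\end{smallmatrix}\bigr)_{\varphi}\to 0
\]
with $I$ injective. The $\mathscr{B}$-component forces $N\cong B$, and the $\mathscr{A}$-component $0\to I\to M\to A\to 0$ splits since $I$ is injective, so $M\cong I\oplus A$ and under this identification $\psi=\bigl(\begin{smallmatrix}\psi_{1}\\ \varphi\end{smallmatrix}\bigr)$ with $\psi_{1}\colon T(B)\to I$. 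To split the extension in $(T\downarrow\mathscr{A})$ I need a section $\bigl(\begin{smallmatrix}a\\1_{B}\end{smallmatrix}\bigr)$, i.e.\ $a=\bigl(\begin{smallmatrix}a_{1}\\1_{A}\end{smallmatrix}\bigr)\colon A\to I\oplus A$ with $a_{1}\varphi=\psi_{1}$. Such an $a_{1}$ exists because $\varphi$ is monic and $I$ is injective.

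For the reverse inclusion, given $\bigl(\begin{smallmatrix}A\\B\end{smallmatrix}\bigr)_{\varphi}\in{}^{\bot}\bigl(\begin{smallmatrix}\mathcal{I}\\0\end{smallmatrix}\bigr)$, I would use that $\mathscr{A}$ has enough injectives to choose a monic $\iota\colon T(B)\hookrightarrow I$ with $I\in\mathcal{I}$, and then construct the specific extension
\[
0\to\bigl(\begin{smallmatrix}I\\0\end{smallmatrix}\bigr)\xrightarrow{\left(\begin{smallmatrix}\binom{1}{0}\\0\end{smallmatrix}\right)}\bigl(\begin{smallmatrix}I\oplus A\\B\end{smallmatrix}\bigr)_{\binom{\iota}{\varphi}}\xrightarrow{\left(\begin{smallmatrix}(0\ 1)\\1_{B}\end{smallmatrix}\right)}\bigl(\begin{smallmatrix}A\\B\end{smallmatrix}\bigr)_{\varphi}\to 0,
\]
whose compatibility at the $\mathscr{A}$-level is immediate. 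By hypothesis it splits, yielding a section whose $\mathscr{A}$-component has the form $\bigl(\begin{smallmatrix}a_{1}\\1_{A}\end{smallmatrix}\bigr)$; the compatibility with the comma-category structure then reads $a_{1}\varphi=\iota$. Since $\iota$ is monic, so is $\varphi$, giving $\bigl(\begin{smallmatrix}A\\B\end{smallmatrix}\bigr)_{\varphi}\in\mathfrak{B}^{\mathscr{A}}_{\mathscr{B}}=\langle{\bf p}(\mathscr{A},\mathscr{B})\rangle$.

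The only place where something subtle happens is the reverse inclusion: the right extension to feed into the orthogonality assumption must be designed so that the resulting section directly factors $\iota$ through $\varphi$. Once that extension is written down, the rest is routine diagram chasing, and crucially no exactness hypothesis on $T$ is required because the $\mathscr{B}$-component of every relevant extension is trivial.
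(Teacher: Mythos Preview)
Your proposal is correct and follows essentially the same approach as the paper: both directions hinge on constructing or analyzing the extension with middle term $\bigl(\begin{smallmatrix}I\oplus A\\B\end{smallmatrix}\bigr)$ and structure map $\bigl(\begin{smallmatrix}\iota\\\varphi\end{smallmatrix}\bigr)$, then using injectivity of $I$ together with monicity of $\varphi$ (or of $\iota$) to produce or exploit a splitting. The only cosmetic differences are that you front-load the identification $\langle{\bf p}(\mathscr{A},\mathscr{B})\rangle=\mathfrak{B}^{\mathscr{A}}_{\mathscr{B}}$ via Proposition~\ref{thm:main1} and use a section of the projection, whereas the paper works with a retraction of the inclusion and invokes Proposition~\ref{thm:main1} at the end of each direction.
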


\pf Assume that $\mathscr{A}$ has enough injective objects and $\mathcal{I}$ is the class of injective objects in $\mathscr{A}$. Let $\left(\begin{smallmatrix}A\\B\end{smallmatrix}\right)_\varphi\in{^{\bot}\left(\begin{smallmatrix}\mathcal {I}\\0 \end{smallmatrix}\right)}$.
Then we have a monomorphism  $\sigma:T(B)\rightarrow I$ with $I$ injective. Thus we have an exact sequence $\xi:0\rightarrow\left(\begin{smallmatrix}I\\0\end{smallmatrix}\right){\rightarrow} \left(\begin{smallmatrix}I\oplus A\\B\end{smallmatrix}\right)\rightarrow\left(\begin{smallmatrix}A\\B\end{smallmatrix}\right)\rightarrow0$ in $(T\downarrow\mathscr{A})$  which is induced from the exact commutative diagram
$$\xymatrix{0\ar[r]&0\ar[r]\ar[d]&T(B)\ar@{=}[r]\ar[d]^{\left(\begin{smallmatrix}\sigma\\ \varphi\end{smallmatrix}\right)}&T(B)\ar[r]\ar[d]&0\\
    0\ar[r]&I \ar[r]^{\left(\begin{smallmatrix}1\\ 0\end{smallmatrix}\right)}&I\oplus A\ar[r]^{(0,1)}&A \ar[r]&0.}$$
By hypothesis, $\xi$ is split. It follows that there is a morphism $(f,g):I\oplus A\rightarrow I$ such that $(f,g)\left(\begin{smallmatrix}1\\0\end{smallmatrix}\right)=1_I$ and $(f,g)\left(\begin{smallmatrix}\sigma\\\varphi\end{smallmatrix}\right)=0$ which implies $g\varphi=-\sigma$. So $\varphi$ is monic, that is $\left(\begin{smallmatrix}A\\B\end{smallmatrix}\right)_\varphi\in\langle{\bf p}({\mathscr {A}},{\mathscr {B}})\rangle$ by Proposition \ref{thm:main1}.

Conversely, assume that $\left(\begin{smallmatrix}A\\B\end{smallmatrix}\right)_\varphi\in\langle{\bf p}({\mathscr {A}},{\mathscr {B}})\rangle$. Consider the exact sequence $$\xi:0\rightarrow\left(\begin{smallmatrix}I\\0\end{smallmatrix}\right)\stackrel{{\tiny\left(\begin{smallmatrix}i\\0\end{smallmatrix}\right)}}{\rightarrow} \left(\begin{smallmatrix}M\\B\end{smallmatrix}\right)_\phi\stackrel{{\tiny\left(\begin{smallmatrix}\pi\\1\end{smallmatrix}\right)}}
\rightarrow\left(\begin{smallmatrix}A\\B\end{smallmatrix}\right)\rightarrow0$$
with $I$ injective. Then there is a map $p:A\rightarrow M$ such that $\pi p=1$ since $I$ is injective. It follows that $\pi(\phi-p\varphi)=0$. By the Universal Property of the kernel, we get a map $\alpha:T(B)\rightarrow I$ such that $\phi-p\varphi=i\alpha$. Since $\varphi$ is monic (by Proposition \ref{thm:main1}) and $I$ is injective, there is a map $\beta:A\rightarrow I$ such that $\alpha=\beta\varphi$. Set $q=p+i\beta$, we have $$q\varphi=(p+i\beta)\varphi=p\varphi+i\beta\varphi=p\varphi+ i\alpha=\phi,$$
$$\ \pi q= \pi p+ \pi i p = \pi p =1.$$
Thus $\left(\begin{smallmatrix}\pi\\1\end{smallmatrix}\right)\left(\begin{smallmatrix}q\\1\end{smallmatrix}\right)=1$, and hence $\xi$ is split.
\epf

\begin{prop}\label{thm:main4}
 If $\mathscr{A}$ has enough injective objects, then $\langle{\bf p}({^{\bot}\mathcal {X}},{^{\bot}\mathcal {Y}})\rangle={^{\bot}\left(\begin{smallmatrix}\mathcal {X}\\\mathcal{Y} \end{smallmatrix}\right)}\bigcap{^{\bot}\left(\begin{smallmatrix}\mathcal {I}\\{0} \end{smallmatrix}\right)}$, where $\mathcal{I}$ is the class of injective objects in $\mathscr{A}$.
\end{prop}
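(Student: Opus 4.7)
The plan is to prove the two inclusions separately; the key observation is that Lemma \ref{orth5} takes care of orthogonality with $\left(\begin{smallmatrix}\mathcal{I}\\0\end{smallmatrix}\right)$ on both sides, so the real work lies in transferring orthogonality with $\left(\begin{smallmatrix}\mathcal{X}\\\mathcal{Y}\end{smallmatrix}\right)$ through the functor ${\bf p}$. Indeed, since ${}^{\bot}\mathcal{X}\subseteq\mathscr{A}$ and ${}^{\bot}\mathcal{Y}\subseteq\mathscr{B}$, one has $\langle{\bf p}({}^{\bot}\mathcal{X},{}^{\bot}\mathcal{Y})\rangle\subseteq\langle{\bf p}(\mathscr{A},\mathscr{B})\rangle={}^{\bot}\left(\begin{smallmatrix}\mathcal{I}\\0\end{smallmatrix}\right)$ by Lemma \ref{orth5}, so the ${}^{\bot}\left(\begin{smallmatrix}\mathcal{I}\\0\end{smallmatrix}\right)$ factor of the right-hand intersection is automatic. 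Conversely, any object in ${}^{\bot}\left(\begin{smallmatrix}\mathcal{I}\\0\end{smallmatrix}\right)$ has monic structure map, again by Lemma \ref{orth5}.

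For $(\supseteq)$, let $\left(\begin{smallmatrix}A\\B\end{smallmatrix}\right)_{\varphi}$ lie in the right-hand intersection. Since $\varphi$ is monic, there is a canonical short exact sequence
\[0\to{\bf p}(0,B)\to\left(\begin{smallmatrix}A\\B\end{smallmatrix}\right)_{\varphi}\to{\bf p}({\rm coker}\,\varphi,0)\to 0\]
in $(T\downarrow\mathscr{A})$, so it suffices to show $B\in{}^{\bot}\mathcal{Y}$ and ${\rm coker}\,\varphi\in{}^{\bot}\mathcal{X}$. For the first, from any extension $0\to Y\to N\xrightarrow{\pi}B\to 0$ with $Y\in\mathcal{Y}$ I assemble the exact sequence $0\to\left(\begin{smallmatrix}0\\Y\end{smallmatrix}\right)\to\left(\begin{smallmatrix}A\\N\end{smallmatrix}\right)_{\varphi T(\pi)}\to\left(\begin{smallmatrix}A\\B\end{smallmatrix}\right)_{\varphi}\to 0$ in $(T\downarrow\mathscr{A})$; since $\left(\begin{smallmatrix}0\\Y\end{smallmatrix}\right)\in\left(\begin{smallmatrix}\mathcal{X}\\\mathcal{Y}\end{smallmatrix}\right)$, it splits, and the splitting restricts to a section of $\pi$. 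For the second, from $0\to X\to M\to{\rm coker}\,\varphi\to 0$ with $X\in\mathcal{X}$ I pull back along $A\twoheadrightarrow{\rm coker}\,\varphi$ to form $M'$ fitting in $0\to X\to M'\to A\to 0$; monicity of $\varphi$ yields a canonical lift $\varphi':T(B)\to M'$ with $\pi_M\varphi'=0$, and then $\left(\begin{smallmatrix}M'\\B\end{smallmatrix}\right)_{\varphi'}$ delivers an extension of $\left(\begin{smallmatrix}A\\B\end{smallmatrix}\right)_{\varphi}$ by $\left(\begin{smallmatrix}X\\0\end{smallmatrix}\right)$. A splitting produces $\sigma_A:A\to M'$ with $\sigma_A\varphi=\varphi'$, whence $\pi_M\sigma_A$ vanishes on ${\rm im}\,\varphi$ and factors through ${\rm coker}\,\varphi$, producing the needed section of $M\twoheadrightarrow{\rm coker}\,\varphi$.

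For $(\subseteq)$, any object in $\langle{\bf p}({}^{\bot}\mathcal{X},{}^{\bot}\mathcal{Y})\rangle$ sits in an exact sequence $0\to{\bf p}(X_1,Y_1)\to\left(\begin{smallmatrix}A\\B\end{smallmatrix}\right)\to{\bf p}(X_2,Y_2)\to 0$, and since ${}^{\bot}\left(\begin{smallmatrix}\mathcal{X}\\\mathcal{Y}\end{smallmatrix}\right)$ is closed under extensions the task reduces to the key claim: for $X\in{}^{\bot}\mathcal{X}$ and $Y\in{}^{\bot}\mathcal{Y}$ one has ${\bf p}(X,Y)\in{}^{\bot}\left(\begin{smallmatrix}\mathcal{X}\\\mathcal{Y}\end{smallmatrix}\right)$. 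Given an extension $0\to\left(\begin{smallmatrix}X'\\Y'\end{smallmatrix}\right)_{\psi}\to\left(\begin{smallmatrix}M\\N\end{smallmatrix}\right)_{\phi}\xrightarrow{(c,d)}{\bf p}(X,Y)\to 0$, the second row $0\to Y'\to N\xrightarrow{d}Y\to 0$ splits via some $s:Y\to N$ because $Y\in{}^{\bot}\mathcal{Y}$; writing $c=\left(\begin{smallmatrix}c_1\\c_2\end{smallmatrix}\right)$, the comma-category compatibility forces $c\phi=\left(\begin{smallmatrix}0\\T(d)\end{smallmatrix}\right)$. Restricting $c_1$ to $K=\ker c_2$ yields a short exact sequence $0\to X'\to K\to X\to 0$ which splits because $X\in{}^{\bot}\mathcal{X}$, producing $\tau_1:X\to M$ with $c_1\tau_1=1_X$ and $c_2\tau_1=0$. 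The pair $((\tau_1,\phi T(s)),s)$ then defines the desired section of $(c,d)$, so the extension splits.

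The main obstacle is the matrix-level verification in the last step: one must confirm simultaneously that $((\tau_1,\phi T(s)),s)$ is a morphism in $(T\downarrow\mathscr{A})$ and that its composition with $(c,d)$ is the identity on both rows, for which the identity $c\phi T(s)=\left(\begin{smallmatrix}0\\1_{T(Y)}\end{smallmatrix}\right)$ is the crucial ingredient. It is worth noting that, unlike Proposition \ref{thm:main3}, no $\mathcal{Y}$-exactness hypothesis on $T$ enters the argument.
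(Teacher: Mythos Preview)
Your proof is correct and follows essentially the same route as the paper's. The only presentational difference is in the inclusion $(\subseteq)$: the paper splits ${\bf p}(X,Y)={\bf p}(X,0)\oplus{\bf p}(0,Y)$ via Remark~\ref{functors}(1) and treats the two summands separately, whereas you handle ${\bf p}(X,Y)$ in one pass using the kernel $K=\ker c_2$ and the simultaneous section $((\tau_1,\phi T(s)),s)$; both arguments are equally direct.
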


\begin{proof}
Assume that $\mathscr{A}$ has enough injective objects and $\mathcal{I}$ is the class of injective objects in $\mathscr{A}$.
To prove that ${^{\bot}\left(\begin{smallmatrix}\mathcal {X}\\\mathcal{Y} \end{smallmatrix}\right)}\bigcap{^{\bot}\left(\begin{smallmatrix}\mathcal {I}\\0 \end{smallmatrix}\right)}\subseteq\langle{\bf p}({^{\bot}\mathcal {X}},{^{\bot}\mathcal {Y}})\rangle$,
we suppose that $\left(\begin{smallmatrix}A\\B\end{smallmatrix}\right)_\varphi$ is an object in ${^{\bot}\left(\begin{smallmatrix}\mathcal {X}\\\mathcal{Y} \end{smallmatrix}\right)}\bigcap{^{\bot}\left(\begin{smallmatrix}\mathcal {I}\\0 \end{smallmatrix}\right)}$. It follows from Proposition \ref{thm:main1} and Lemma \ref{orth5} that $\varphi$ is monic. Note that both ${^{\bot}\mathcal {X}}$ and ${^{\bot}\mathcal {Y}}$ are closed under extensions, and so it is sufficient to show $B\in{^{\bot}\mathcal {Y}}$ and coker$\varphi\in{^{\bot}\mathcal {X}}$ by Proposition \ref{thm:main1}.

Let $\zeta:0\rightarrow Y\rightarrow N\rightarrow B\rightarrow0$ be an exact sequence in $\mathscr{B}$ with $Y\in\mathcal {Y}$. Then there is an exact sequence $\xi:0\rightarrow\left(\begin{smallmatrix}0\\Y\end{smallmatrix}\right){\rightarrow} \left(\begin{smallmatrix}A\\N\end{smallmatrix}\right)\rightarrow\left(\begin{smallmatrix}A\\B\end{smallmatrix}\right)\rightarrow0$ in $(T\downarrow\mathscr{A})$. By hypothesis, the sequence $\xi$ is split since $\left(\begin{smallmatrix}0\\Y\end{smallmatrix}\right)\in\left(\begin{smallmatrix}\mathcal {X}\\\mathcal{Y} \end{smallmatrix}\right)$. So the sequence $\zeta$ is split and $B\in{^{\bot}\mathcal {Y}}$, as desired.

Let $0\rightarrow X\stackrel{f}{\rightarrow} M{\rightarrow} \mbox{coker}\varphi\rightarrow0$ be an exact sequence in $\mathscr{A}$ with $X\in\mathcal{X} $. Then we have a pullback diagram $$\xymatrix{&&0\ar[d]&0\ar[d]\\&&T(B)\ar@{=}[r]\ar[d]^{\tilde{\varphi}}&T(B)\ar[d]^\varphi\\
0\ar[r]&X\ar[r]^{\tilde{f}}\ar@{=}[d]&L\ar[r]^\pi\ar[d]^p&A\ar[r]\ar[d]&0\\
    0\ar[r]&X \ar[r]^f&M\ar[d]\ar[r]&\mbox{coker}\varphi\ar[d] \ar[r]&0\\
    &&0&0,\\}$$ which induces an exact sequence $\xi':0\rightarrow\left(\begin{smallmatrix}X\\0\end{smallmatrix}\right)\stackrel{\left(\begin{smallmatrix}\tilde{f}\\0\end{smallmatrix}\right)}
{\rightarrow}\left(\begin{smallmatrix}L\\B\end{smallmatrix}\right)_{\widetilde{\varphi}}
\stackrel{\left(\begin{smallmatrix}\pi\\1\end{smallmatrix}\right)}
{\rightarrow}\left(\begin{smallmatrix}A\\B\end{smallmatrix}\right)_{\varphi}\rightarrow0$ in $(T\downarrow\mathscr{A})$. Since $\xi'$ is split by hypothesis, there is $\tilde{g}:L\rightarrow X$ such that $\tilde{g}\tilde{f}=1$ and $\tilde{g}\tilde{\varphi}=0$. By the Universal Property of the cokernel, there is a morphism ${g}:M\rightarrow X$ such that $\tilde{g}=g p$. Thus we have $gf=gp\tilde{f}=\tilde{g}\tilde{f}=1$ which means the third row in the above diagram is split. So  coker$\varphi\in{^{\bot}\mathcal {X}}$, as required.

For the reverse containment $\langle{\bf p}({^{\bot}\mathcal {X}},{^{\bot}\mathcal {Y}})\rangle \subseteq{^{\bot}\left(\begin{smallmatrix}\mathcal {X}\\\mathcal{Y} \end{smallmatrix}\right)}\bigcap{^{\bot}\left(\begin{smallmatrix}\mathcal {I}\\0 \end{smallmatrix}\right)}$, by Remark \ref{functors}(1) and Lemma \ref{orth5}, we only need to show that ${\bf p}(A,0), {\bf p}(0,B)\in{^{\bot}\left(\begin{smallmatrix}\mathcal {X}\\\mathcal{Y} \end{smallmatrix}\right)}$ for any $A\in{^{\bot}\mathcal {X}}$ and $B\in{^{\bot}\mathcal {Y}}$.

Let $\varepsilon:0\rightarrow\left(\begin{smallmatrix}X\\Y \end{smallmatrix}\right){\rightarrow} \left(\begin{smallmatrix}M\\Y\end{smallmatrix}\right){\rightarrow}\left(\begin{smallmatrix}A\\0\end{smallmatrix}\right)\rightarrow0$ be an exact sequence in $(T\downarrow\mathscr{A})$ with $A\in{^{\bot}\mathcal {X}}$, $X\in{\mathcal{X}}$ and $Y\in{\mathcal{Y}}$. Note that the exact sequence $0\rightarrow X\rightarrow M\rightarrow A\rightarrow0$ in $\mathscr{A}$ is split. It follows that the sequence $\varepsilon$ is split. So ${\bf p}(A,0)\in{^{\bot}\left(\begin{smallmatrix}\mathcal {X}\\\mathcal{Y} \end{smallmatrix}\right)}$. Similarly, one can prove that ${\bf p}(0,B)\in{^{\bot}\left(\begin{smallmatrix}\mathcal {X}\\\mathcal{Y} \end{smallmatrix}\right)}$. This completes the proof.
\end{proof}

Recall that an exact category $\mathscr{D}$ is said to be \emph{Frobenius} provided that it has enough projective and
enough injective objects, and the class of projective objects coincides with the class of injective
objects. We end this section with the following result which is a generalization of \cite[Corollary 2.3]{XZZ}.

\begin{cor}\label{Frobenius} Let $\mathscr{A}$ and $\mathscr{B}$ be abelian categories with enough projective objects and enough injective objects. If $T:\mathscr{B}\rightarrow\mathscr{A}$ is an exact functor, then $\langle{\bf p}(\mathscr{A},\mathscr{B})\rangle$ is a Frobenius category if and only if $\mathscr{A}$ and $\mathscr{B}$ are Frobenius and $T$ preserves projective objects.
\end{cor}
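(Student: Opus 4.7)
The plan is to work with two earlier results: Proposition \ref{thm:main1} (applicable here since $T$ exact implies $T$ is $\mathscr{B}$-exact), which identifies $\mathcal{E}:=\langle{\bf p}(\mathscr{A},\mathscr{B})\rangle=\mathfrak{B}^{\mathscr{A}}_{\mathscr{B}}$ with the class of triples whose structure morphism is monic; and Proposition \ref{thm:main3}, which gives $\mathcal{E}^{\bot}=\left(\begin{smallmatrix}\mathcal{I}_\mathscr{A}\\\mathcal{I}_\mathscr{B}\end{smallmatrix}\right)$, where $\mathcal{I}_\mathscr{A},\mathcal{I}_\mathscr{B}$ denote the injective objects. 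Since $\mathcal{E}$ is closed under extensions in $(T\downarrow\mathscr{A})$, it inherits an exact structure for which $\mathrm{Ext}^1_\mathcal{E}(E,F)=\mathrm{Ext}^1_{(T\downarrow\mathscr{A})}(E,F)$ when $E,F\in\mathcal{E}$; consequently the injective objects of $\mathcal{E}$ are precisely $\mathcal{E}\cap\mathcal{E}^{\bot}$.

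For the ``if'' direction, assume $\mathscr{A}$, $\mathscr{B}$ are Frobenius and $T$ preserves projectives. By Remark \ref{functors}(1) and Proposition \ref{prop:3.6}(2), ${\bf p}$ is exact and left adjoint to the exact forgetful functor ${\bf q}$, so ${\bf p}(P,Q)$ is projective in $(T\downarrow\mathscr{A})$---hence in $\mathcal{E}$---whenever $P,Q$ are projective. Given $\left(\begin{smallmatrix}X\\Y\end{smallmatrix}\right)_\varphi\in\mathcal{E}$, surjections $P\twoheadrightarrow X$ and $Q\twoheadrightarrow Y$ from projectives yield an admissible epi ${\bf p}(P,Q)\twoheadrightarrow\left(\begin{smallmatrix}X\\Y\end{smallmatrix}\right)_\varphi$; exactness of $T$ ensures the kernel's structure map is a composition of two monomorphisms, so the kernel lies in $\mathcal{E}$. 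Since $\mathscr{A},\mathscr{B}$ are Frobenius, $P$ and $Q$ are also injective, and $T(Q)$ is projective (hence injective) by hypothesis; thus ${\bf p}(P,Q)\in\mathcal{E}^{\bot}$ is injective in $\mathcal{E}$. To establish enough injectives, I would embed any $\left(\begin{smallmatrix}X\\Y\end{smallmatrix}\right)_\varphi\in\mathcal{E}$ into some ${\bf p}(P,Q)$ as follows: choose a monomorphism $b:Y\hookrightarrow Q$ with $Q$ projective-injective in $\mathscr{B}$; using that $T(Q)$ is injective in $\mathscr{A}$, extend $T(b):T(Y)\to T(Q)$ along the monic $\varphi$ to $a_2:X\to T(Q)$; compose $X\twoheadrightarrow X/\varphi(T(Y))\hookrightarrow P$ into a projective-injective $P$ to get $a_1:X\to P$; then $(a_1,a_2,b):\left(\begin{smallmatrix}X\\Y\end{smallmatrix}\right)_\varphi\to{\bf p}(P,Q)$ is monic by a diagram chase. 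This gives enough injectives, and the Frobenius property follows: any projective of $\mathcal{E}$ is a summand of some ${\bf p}(P,Q)$, hence injective; dually any injective is a summand of some ${\bf p}(P,Q)$, hence projective.

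For the ``only if'' direction, assume $\mathcal{E}$ is Frobenius. For any projective $Q\in\mathscr{B}$, ${\bf p}(0,Q)=\left(\begin{smallmatrix}T(Q)\\Q\end{smallmatrix}\right)$ is projective in $\mathcal{E}$, hence injective, hence lies in $\mathcal{E}^{\bot}$; by Proposition \ref{thm:main3} this forces $T(Q)\in\mathcal{I}_\mathscr{A}$ and $Q\in\mathcal{I}_\mathscr{B}$. The same device with ${\bf p}(P,0)=\left(\begin{smallmatrix}P\\0\end{smallmatrix}\right)$ shows every projective in $\mathscr{A}$ is injective. Conversely, for any injective $A\in\mathscr{A}$, the object $\left(\begin{smallmatrix}A\\0\end{smallmatrix}\right)\in\mathcal{E}\cap\mathcal{E}^{\bot}$ is injective in $\mathcal{E}$, hence projective by Frobenius; comparing it with an admissible epi $\left(\begin{smallmatrix}P\\0\end{smallmatrix}\right)\twoheadrightarrow\left(\begin{smallmatrix}A\\0\end{smallmatrix}\right)$ (where $P\twoheadrightarrow A$ is a projective cover in $\mathscr{A}$) realizes $A$ as a summand of $P$, so $A$ is projective. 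The analogous argument for $\mathscr{B}$ uses an injective $B$, picks $\psi:T(B)\hookrightarrow I$ into an $\mathscr{A}$-injective, and exhibits $\left(\begin{smallmatrix}I\\B\end{smallmatrix}\right)_\psi\in\mathcal{E}\cap\mathcal{E}^{\bot}$ as a summand of some ${\bf p}(P,Q)$, so that $B$ is a summand of $Q$ in $\mathscr{B}$ and thus projective. Finally, from $\mathscr{A}$ being Frobenius together with ``$T(Q)$ is injective when $Q$ is projective'', it follows that $T$ preserves projectives.

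The main technical obstacle I anticipate is pinning down the injective objects of $\mathcal{E}$ as exactly $\mathcal{E}\cap\mathcal{E}^{\bot}$ and, in the ``only if'' direction, verifying that every projective of $\mathcal{E}$ really is a summand of some ${\bf p}(P,Q)$---which hinges on the fact that these are admissible epis in $\mathcal{E}$, i.e., that their kernels remain in $\mathcal{E}$. This is precisely where exactness of $T$ (not merely right-exactness) is needed, to keep the structure morphism of the kernel monic.
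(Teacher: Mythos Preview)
Your argument is correct and complete, but it takes a different route from the paper's. The paper exploits Proposition~\ref{thm:main4} to compute the \emph{left} orthogonal ${^{\bot}\langle{\bf p}(\mathscr{A},\mathscr{B})\rangle}$ in one stroke: a chain of identities yields ${^{\bot}\langle{\bf p}(\mathscr{A},\mathscr{B})\rangle}=\langle{\bf p}(\mathcal{P},\mathcal{Q})\rangle$, which automatically sits inside $\mathcal{E}$ and therefore coincides with the class of projectives of $\mathcal{E}$. Combined with the description of injectives as $\left(\begin{smallmatrix}\mathcal{I}\\\mathcal{J}\end{smallmatrix}\right)\cap\mathcal{E}$ (your Proposition~\ref{thm:main3} step), the Frobenius condition reduces immediately to the set-theoretic equality $\langle{\bf p}(\mathcal{P},\mathcal{Q})\rangle=\left(\begin{smallmatrix}\mathcal{I}\\\mathcal{J}\end{smallmatrix}\right)\cap\mathcal{E}$, which unpacks to $T(\mathcal{Q})\subseteq\mathcal{I}=\mathcal{P}$ and $\mathcal{J}=\mathcal{Q}$. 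By contrast, you never invoke Proposition~\ref{thm:main4}; instead you build admissible epis and monics into and out of ${\bf p}(P,Q)$ by hand, and verify that kernels and cokernels remain in $\mathcal{E}$ via the Snake Lemma. Your approach is more elementary (it avoids the nontrivial Proposition~\ref{thm:main4}) and makes the ``enough projectives/injectives'' part of the Frobenius condition fully explicit, whereas the paper's proof is considerably shorter and leaves these routine verifications implicit. Either way the exactness of $T$ enters at the same point: keeping the structure morphisms of kernels monic so that the relevant short exact sequences are conflations in $\mathcal{E}$.
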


\pf Clearly, $\langle{\bf p}(\mathscr{A},\mathscr{B})\rangle$ is an exact category where the exact structure inherits from $(T\downarrow\mathscr{A})$.
Let $\mathcal{P}$ (\textrm{resp.} $\mathcal{I}$) be the class of projective (resp. injective) objects in $\mathscr{A}$ and $\mathcal{Q}$ (\textrm{resp.} $\mathcal{J}$) the class of projective (resp. injective) objects in $\mathscr{B}$.

Then we have
$$\begin{array}{lll}{^{\bot}{\bf p}(\mathscr {A},0)}\bigcap{^{\bot}\left(\begin{smallmatrix}\mathcal {I}\\0 \end{smallmatrix}\right)}&={^{\bot}\left(\begin{smallmatrix}\mathscr {A}\\0 \end{smallmatrix}\right)}\bigcap{^{\bot}\left(\begin{smallmatrix}\mathcal {I}\\0 \end{smallmatrix}\right)}&(\mbox{since }{\bf p}(\mathscr {A},0)=\left(\begin{smallmatrix}\mathscr {A}\\0 \end{smallmatrix}\right) )\\&=\langle{\bf p}({^{\bot}\mathscr{A}},{^{\bot}0})\rangle&(\mbox{by Proposition \ref{thm:main4}})\\&=\langle{\bf p}(\mathcal{P},\mathscr{B})\rangle,\end{array}$$

$$\begin{array}{lll}{^{\bot}{\bf p}(0,\mathscr {B})}\bigcap{^{\bot}\left(\begin{smallmatrix}\mathcal {I}\\0 \end{smallmatrix}\right)}&={^{\bot}\left(\begin{smallmatrix}T(\mathscr {B})\\\mathscr {B} \end{smallmatrix}\right)}\bigcap{^{\bot}\left(\begin{smallmatrix}\mathcal {I}\\0 \end{smallmatrix}\right)}&(\mbox{since }{\bf p}(0,\mathscr {B})=\left(\begin{smallmatrix}T(\mathscr {B})\\\mathscr {B} \end{smallmatrix}\right) )\\&=\langle{\bf p}({^{\bot}T(\mathscr{B})},{^{\bot}\mathscr{B}})\rangle&(\mbox{by Proposition \ref{thm:main4}})\\&=\langle{\bf p}({^{\bot}T(\mathscr{B})},\mathcal{Q})\rangle.\end{array}$$

Thus, we get
$$\begin{array}{lll}{^{\bot}\langle{\bf p}(\mathscr{A},\mathscr{B})\rangle}&={^{\bot}{\bf p}(\mathscr{A},\mathscr{B})}={^{\bot}{\bf p}(\mathscr {A},0)}\bigcap{^{\bot}{\bf p}(0,\mathscr {B})}&(\mbox{by Remarks \ref{functors}(1)})\\&={^{\bot}{\bf p}(\mathscr {A},0)}\bigcap{^{\bot}\left(\begin{smallmatrix}\mathcal {I}\\0 \end{smallmatrix}\right)}\bigcap{^{\bot}{\bf p}(0,\mathscr {B})}&(\mbox{since }\mathcal{I}\subset\mathscr{A})\\&=\left({^{\bot}{\bf p}(\mathscr {A},0)}\bigcap{^{\bot}\left(\begin{smallmatrix}\mathcal {I}\\0 \end{smallmatrix}\right)}\right)\bigcap\left({^{\bot}{\bf p}(0,\mathscr {B})}\bigcap{^{\bot}\left(\begin{smallmatrix}\mathcal {I}\\0 \end{smallmatrix}\right)}\right)\\&=\langle{\bf p}(\mathcal{P},\mathscr{B})\rangle\bigcap\langle{\bf p}({^{\bot}T(\mathscr{B})},\mathcal{Q})\rangle&(\mbox{by above identities})\\&=\langle{\bf p}(\mathcal{P},\mathcal{Q})\rangle.\end{array}$$

Moreover, the class of injective objects in the exact category $\langle{\bf p}(\mathscr{A},\mathscr{B})\rangle$ is $\left(\begin{smallmatrix}\mathcal {I}\\\mathcal{J} \end{smallmatrix}\right)\bigcap\langle{\bf p}(\mathscr{A},\mathscr{B})\rangle$.
Therefore, $\langle{\bf p}(\mathscr{A},\mathscr{B})\rangle$ is Frobenius if and only if $T(\mathcal{Q})\subset\mathcal{I}=\mathcal{P}$ and $\mathcal{J}=\mathcal{Q}$, as desired.
\epf

\section{Complete cotorsion pairs and the proof of Theorem \ref{thm:special precovering}(1)}
In this section, we shall use our results in Section 2 to show the first statement of the main
result, Theorem \ref{thm:special precovering}. More precisely, we first recall the definition of complete hereditary cotorsion pairs in abelian categories, and then characterize when complete hereditary  cotorsion pairs in abelian categories $\mathscr{A}$ and $\mathscr{B}$ can induce complete hereditary cotorsion pairs in $(T\downarrow\mathscr{A})$. Especially, we shall establish
a crucial result, Proposition \ref{thm:cotorsion-pair1}, which will play a role in the proof of Theorem \ref{thm:special precovering}(1).

Let $\mathcal{L}$ be a class of objects in an abelian category $\mathscr{D}$. Recall that $\mathcal{L}$ is said to  be \emph{resolving} if whenever $0\ra X\ra Y\ra Z\ra0$  is exact in $\mathscr{D}$ with $Z\in{\mathcal{L}}$, then $X\in{\mathcal{L}}$ if and only if $Y\in{\mathcal{L}}$. Dually, $\mathcal{L}$ is said to  be \emph{coresolving} if whenever $0\ra X\ra Y\ra Z\ra0$  is exact in $\mathscr{D}$ with $X\in{\mathcal{L}}$, then $Y\in{\mathcal{L}}$ if and only if $Z\in{\mathcal{L}}$.

\begin{df}
Let $\mathscr{D}$ be an abelian category.
 \begin{enumerate}
\item  A \emph{cotorsion pair} \cite{SL} is a pair of classes ($\mathcal{H}$, $\mathcal{G}$) of objects in $\mathscr{D}$ such that $\mathcal{H}^{{\perp}}=\mathcal{G}$ and $^{{ \perp}}\mathcal{G}=\mathcal{H}$.
\item A cotorsion pair ($\mathcal{H}$, $\mathcal{G}$) is said to be \emph{hereditary} \cite{HC} if ${\rm Ext}^i_\mathscr{D}(X,Y)=0$ for every $X\in{\mathcal{H}}$ and $Y\in{\mathcal{G}}$, and $i>0$. Moreover, if $\mathscr{D}$ has enough projectives and injectives, the condition that ($\mathcal{H}$, $\mathcal{G}$) is hereditary is equivalent to that $\mathcal{H}$ is resolving and $\mathcal{G}$ is coresolving (see \cite[Proposition 2.1]{EPZ}).

\item A cotorsion pair ($\mathcal{H}$, $\mathcal{G}$) is said to be \emph{complete} \cite{Gober} if $\mathcal{H}$ is special precovering and $\mathcal{G}$ is special preenveloping. Moreover, if $\mathscr{D}$ has enough projective objects, the condition that ($\mathcal{H}$, $\mathcal{G}$) is complete is equivalent to that $\mathcal{G}$ is special preenveloping. Similarly, if $\mathscr{D}$ has enough injective objects, the condition that ($\mathcal{H}$, $\mathcal{G}$) is complete is equivalent to that $\mathcal{H}$ is special precovering (see \cite[6.3, p.595]{HJ}).
\end{enumerate}
\end{df}

\begin{lem}\label{resov}  The following are true for any comma category $(T\downarrow\mathscr{A})$:

 \begin{enumerate}
\item  $\left(\begin{smallmatrix}\mathcal {X}\\\mathcal{Y} \end{smallmatrix}\right)$ is coresolving in $(T\downarrow\mathscr{A})$ if and only if $\mathcal {X}$ and $\mathcal {Y}$ are coresolving in $\mathscr{A}$ and $\mathscr{B}$ respectively.
\item  If $T:\mathscr{B}\rightarrow\mathscr{A}$ is $\mathcal {Y}$-exact and $\mathcal {X}$, $\mathcal {Y}$ are closed under extensions in $\mathscr{A}$ and $\mathscr{B}$, respectively, then $\langle{\bf p}(\mathcal {X},\mathcal {Y})\rangle$ is resolving in $(T\downarrow\mathscr{A})$ if and only if $\mathcal {X}$ and $\mathcal {Y}$ are resolving in $\mathscr{A}$ and $\mathscr{B}$, respectively.

   \end{enumerate}
\end{lem}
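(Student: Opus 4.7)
The plan in both parts is to transfer the (co)resolving property back and forth by comparing a short exact sequence in $(T\downarrow\mathscr{A})$ with its two componentwise short exact sequences in $\mathscr{A}$ and $\mathscr{B}$, choosing convenient ``lifts'' of a given componentwise sequence to the comma category.

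For (1), the key elementary fact is that $0\to\left(\begin{smallmatrix}A'\\B'\end{smallmatrix}\right)\to\left(\begin{smallmatrix}A\\B\end{smallmatrix}\right)\to\left(\begin{smallmatrix}A''\\B''\end{smallmatrix}\right)\to0$ is exact in $(T\downarrow\mathscr{A})$ iff both $0\to A'\to A\to A''\to0$ and $0\to B'\to B\to B''\to0$ are exact. From this, the ``if'' direction is immediate: coresolvingness of $\mathcal{X}$ and $\mathcal{Y}$ is applied coordinate-by-coordinate. For ``only if'', I would lift a given $\mathscr{A}$-sequence $0\to A'\to A\to A''\to0$ (with $A'\in\mathcal{X}$) by placing $0$ in the $B$-coordinate, and lift a $\mathscr{B}$-sequence $0\to B'\to B\to B''\to0$ by the ``zero-top'' objects $\left(\begin{smallmatrix}0\\B\end{smallmatrix}\right)_{0}$. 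In either case the left-hand object lies in $\left(\begin{smallmatrix}\mathcal{X}\\\mathcal{Y}\end{smallmatrix}\right)$ because $0\in\mathcal{X}\cap\mathcal{Y}$, and the coresolving hypothesis on $\left(\begin{smallmatrix}\mathcal{X}\\\mathcal{Y}\end{smallmatrix}\right)$ transfers directly; importantly, neither lift requires any exactness assumption on $T$, consistent with the hypotheses of part (1).

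For (2), Proposition \ref{thm:main1} identifies $\langle{\bf p}(\mathcal{X},\mathcal{Y})\rangle$ with $\mathfrak{B}^{\mathcal{X}}_{\mathcal{Y}}$ under the given hypotheses, so I would work with this latter description throughout. To prove ``if'', start with an exact $0\to\left(\begin{smallmatrix}A'\\B'\end{smallmatrix}\right)_{\varphi'}\to\left(\begin{smallmatrix}A\\B\end{smallmatrix}\right)_{\varphi}\to\left(\begin{smallmatrix}A''\\B''\end{smallmatrix}\right)_{\varphi''}\to0$ whose right-hand term is in $\mathfrak{B}^{\mathcal{X}}_{\mathcal{Y}}$. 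The $\mathcal{Y}$-exactness of $T$ gives exactness of $0\to T(B')\to T(B)\to T(B'')\to0$, and the snake-lemma diagram already drawn in the proof of Proposition \ref{thm:main1} supplies (a) the ``$\varphi$ monic iff $\varphi'$ monic'' statement and (b) an exact sequence $0\to\coker\varphi'\to\coker\varphi\to\coker\varphi''\to0$. Feeding (a) and (b) together with the componentwise $\mathscr{B}$-sequence into the resolving hypotheses on $\mathcal{X}$ and $\mathcal{Y}$ makes membership of the middle term in $\mathfrak{B}^{\mathcal{X}}_{\mathcal{Y}}$ equivalent to that of the left term. For ``only if'', the $\mathscr{A}$-case uses the same zero-bottom lift as in (1) (since $\left(\begin{smallmatrix}X\\0\end{smallmatrix}\right)\in\mathfrak{B}^{\mathcal{X}}_{\mathcal{Y}}$ iff $X\in\mathcal{X}$), while for the $\mathscr{B}$-case I would lift a $\mathscr{B}$-sequence $0\to B'\to B\to B''\to0$ to $0\to\left(\begin{smallmatrix}T(B')\\B'\end{smallmatrix}\right)_{\mathrm{id}}\to\left(\begin{smallmatrix}T(B)\\B\end{smallmatrix}\right)_{\mathrm{id}}\to\left(\begin{smallmatrix}T(B'')\\B''\end{smallmatrix}\right)_{\mathrm{id}}\to0$; exactness of this sequence is precisely $\mathcal{Y}$-exactness of $T$, and each $\left(\begin{smallmatrix}T(C)\\C\end{smallmatrix}\right)_{\mathrm{id}}$ has zero cokernel and hence lies in $\mathfrak{B}^{\mathcal{X}}_{\mathcal{Y}}$ iff $C\in\mathcal{Y}$, so the resolving hypothesis on $\mathfrak{B}^{\mathcal{X}}_{\mathcal{Y}}$ transports to $\mathcal{Y}$.

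The only real bookkeeping step, and the one that pinpoints where the $\mathcal{Y}$-exactness hypothesis is actually used, is assembling the snake-lemma output from Proposition \ref{thm:main1} into the resolving condition on cokernels. Since that snake diagram is already drawn in the proof of Proposition \ref{thm:main1}, I would cite it rather than redraw it, keeping the proof short.
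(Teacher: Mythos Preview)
Your proposal is correct and follows essentially the same approach as the paper's proof: for the ``if'' direction of (2) you run the same snake-lemma argument (with $\mathcal{Y}$-exactness of $T$ giving exactness of the top row, $\ker\varphi'\cong\ker\varphi$, and the exact cokernel sequence), and for ``only if'' you use exactly the lifts ${\bf p}(X,0)$ and ${\bf p}(0,B)=\left(\begin{smallmatrix}T(B)\\B\end{smallmatrix}\right)_{\mathrm{id}}$ that the paper uses (the latter only implicitly, under the word ``Similarly''). One small inaccuracy: the snake diagram you need here is \emph{not} the one drawn in the proof of Proposition~\ref{thm:main1}; that diagram is for extensions of objects of the special form ${\bf p}(X,Y)$, whereas here you need the general diagram with arbitrary $\varphi',\varphi,\varphi''$, so you should draw it rather than cite it.
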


\pf  We only prove (2); the proof of (1) is straightforward. Assume that $T$ is $\mathcal {Y}$-exact. For the ``only if" part, we assume that $\langle{\bf p}(\mathcal {X},\mathcal {Y})\rangle$ is resolving in $(T\downarrow\mathscr{A})$. Let $0\rightarrow X'\rightarrow X\rightarrow X''\rightarrow0$ be an exact sequence in $\mathscr{A}$ with $X''$ $\in$ $\mathcal{X}$. It follows that the sequence $0\rightarrow{\bf p}(X',0)\rightarrow{\bf p}(X,0)\rightarrow{\bf p}(X'',0)\rightarrow0 $ is exact in $(T\downarrow\mathscr{A})$. Thus we get that ${\bf p}(X,0)\in\langle{\bf p}(\mathcal {X},\mathcal {Y})\rangle$ if and only if  ${\bf p}(X',0)\in\langle{\bf p}(\mathcal {X},\mathcal {Y})\rangle$ by hypothesis, and hence $X\in{\mathcal{X}}$ if and only if $X'\in{\mathcal{X}}$. So $\mathcal{X}$ is resolving in $\mathscr{A}$. Similarly, one can prove that $\mathcal{Y}$ is resolving in $\mathscr{B}$.

For the ``if" part, we assume that  $\mathcal {X}$ and $\mathcal {Y}$ are resolving in $\mathscr{A}$ and $\mathscr{B}$, respectively. Then $\mathcal {X}$ and $\mathcal {Y}$ are closed under extensions in $\mathscr{A}$ and $\mathscr{B}$ respectively. Let  $0\rightarrow\left(\begin{smallmatrix}X'\\Y' \end{smallmatrix}\right)_{\varphi'}\rightarrow \left(\begin{smallmatrix}X\\Y\end{smallmatrix}\right)_{\varphi}\rightarrow\left(\begin{smallmatrix}X''\\Y''\end{smallmatrix}\right)_{\varphi''}\rightarrow0 $ be an exact sequence in $(T\downarrow\mathscr{A})$ with $\left(\begin{smallmatrix}X''\\Y'' \end{smallmatrix}\right)_{\varphi''}\in\langle{\bf p}(\mathcal {X},\mathcal {Y})\rangle$. Note that $T$ is $\mathcal {Y}$-exact. Then we have the following commutative diagram of exact sequences:

$$\xymatrix{0\ar[r]&T(Y')\ar[r]\ar[d]^{\varphi'}&T(Y)\ar[r]\ar[d]^\varphi&T(Y'')\ar[r]\ar[d]^{\varphi''}&0\\
    0\ar[r]&X' \ar[r]&X\ar[r]&X'' \ar[r]&0.}$$
Note that $\varphi''$ is monic and $\mbox{coker}\varphi''\in \mathcal{X}$ by Proposition \ref{thm:main1}. Then we have that ${\ker}\varphi'\cong{\ker}\varphi$ and
$0\rightarrow\mbox{coker}\varphi'\rightarrow\mbox{coker}\varphi\rightarrow\mbox{coker}\varphi''\rightarrow0$ is exact in $\mathscr{A}$. Therefore, we get that $\varphi$ is monic if and only if $\varphi'$ is monic. Since $\mathcal{X}$ is resolving in $\mathscr{A}$ by hypothesis, we have that $\mbox{coker}\varphi\in \mathcal {X}$ if and only if $\mbox{coker}\varphi'\in \mathcal {X}$. Note that $0\rightarrow Y' \rightarrow Y\rightarrow Y''\rightarrow0$ is an exact sequence in $\mathscr{B}$ and $\mathcal{Y}$ is resolving in $\mathscr{B}$. It follows that $Y'\in\mathcal{Y}$ if and only if $Y\in\mathcal{Y}$. So  $\left(\begin{smallmatrix}X\\Y \end{smallmatrix}\right)_{\varphi}\in\langle{\bf p}(\mathcal {X},\mathcal {Y})\rangle$ if and only if $\left(\begin{smallmatrix}X'\\Y' \end{smallmatrix}\right)_{\varphi'}\in\langle{\bf p}(\mathcal {X},\mathcal {Y})\rangle$ by Proposition \ref{thm:main1}. This completes the proof.
\epf

\begin{lem}\label{lem:cotorsion-pair} Let $\mathscr{A}$ and $\mathscr{B}$ both have enough projective objects and enough injective objects.
If $T:\mathscr{B}\rightarrow\mathscr{A}$ is $\mathcal {Y}$-exact, then $(\mathcal {X}, \mathcal {X}^{\bot})$ and $(\mathcal {Y}, \mathcal {Y}^{\bot})$ are (hereditary) cotorsion pairs in $\mathscr{A}$ and $\mathscr{B}$, respectively if and only if $(\langle{\bf p}(\mathcal {X},\mathcal {Y})\rangle, \left(\begin{smallmatrix}\mathcal {X}^{\bot}\\\mathcal {Y}^{\bot}\end{smallmatrix}\right))$ is a (hereditary) cotorsion pair in $(T\downarrow\mathscr{A})$ and $\mathcal{X}$, $\mathcal {Y}$ are closed under extensions.
\end{lem}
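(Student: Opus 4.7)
The plan is to read the iff as an assembly of the three preceding structural results: Proposition~\ref{thm:main3} computes the right $\bot$ of $\langle{\bf p}(\mathcal{X},\mathcal{Y})\rangle$, Proposition~\ref{thm:main4} computes the left $\bot$ of $\left(\begin{smallmatrix}\mathcal{X}^\bot\\\mathcal{Y}^\bot\end{smallmatrix}\right)$ up to an intersection with ${}^{\bot}\left(\begin{smallmatrix}\mathcal{I}\\0\end{smallmatrix}\right)$, and Proposition~\ref{thm:main1} gives the intrinsic description $\langle{\bf p}(\mathcal{X},\mathcal{Y})\rangle = \mathfrak{B}^{\mathcal{X}}_{\mathcal{Y}}$. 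Lemma~\ref{resov} then takes care of the hereditary refinement via (co)resolving properties. Throughout, the standing hypothesis that $\mathscr{A}$ (and $\mathscr{B}$) has enough injectives is what licenses the use of Proposition~\ref{thm:main4}, and $\mathcal{Y}$-exactness of $T$ is what licenses the characterisation $\langle{\bf p}(\mathcal{X},\mathcal{Y})\rangle = \mathfrak{B}^{\mathcal{X}}_{\mathcal{Y}}$ through the converse of Proposition~\ref{thm:main1}.

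For the forward implication, Proposition~\ref{thm:main3} already delivers $\langle{\bf p}(\mathcal{X},\mathcal{Y})\rangle^\bot = \left(\begin{smallmatrix}\mathcal{X}^\bot\\\mathcal{Y}^\bot\end{smallmatrix}\right)$. The reverse identity I obtain from Proposition~\ref{thm:main4}, which gives ${}^{\bot}\left(\begin{smallmatrix}\mathcal{X}^\bot\\\mathcal{Y}^\bot\end{smallmatrix}\right) \cap {}^{\bot}\left(\begin{smallmatrix}\mathcal{I}\\0\end{smallmatrix}\right) = \langle{\bf p}({}^{\bot}(\mathcal{X}^\bot), {}^{\bot}(\mathcal{Y}^\bot))\rangle$; since $(\mathcal{X},\mathcal{X}^\bot)$ and $(\mathcal{Y},\mathcal{Y}^\bot)$ are cotorsion pairs, the right-hand side collapses to $\langle{\bf p}(\mathcal{X},\mathcal{Y})\rangle$. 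To discard the intersection factor I use the simple remark that every injective $I \in \mathcal{I}$ sits in $\mathcal{X}^\bot$ and $0 \in \mathcal{Y}^\bot$, hence $\left(\begin{smallmatrix}\mathcal{I}\\0\end{smallmatrix}\right) \subseteq \left(\begin{smallmatrix}\mathcal{X}^\bot\\\mathcal{Y}^\bot\end{smallmatrix}\right)$ and so ${}^{\bot}\left(\begin{smallmatrix}\mathcal{X}^\bot\\\mathcal{Y}^\bot\end{smallmatrix}\right) \subseteq {}^{\bot}\left(\begin{smallmatrix}\mathcal{I}\\0\end{smallmatrix}\right)$, making the intersection redundant. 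The hereditary refinement is immediate from Lemma~\ref{resov}: resolvingness of $\mathcal{X}$ and $\mathcal{Y}$ transfers to $\langle{\bf p}(\mathcal{X},\mathcal{Y})\rangle$ by part~(2), and coresolvingness of $\mathcal{X}^\bot, \mathcal{Y}^\bot$ transfers to $\left(\begin{smallmatrix}\mathcal{X}^\bot\\\mathcal{Y}^\bot\end{smallmatrix}\right)$ by part~(1), which in the presence of enough projectives and injectives is equivalent to heredity.

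For the converse, assume the displayed pair is a cotorsion pair in $(T\downarrow\mathscr{A})$ and that $\mathcal{X},\mathcal{Y}$ are closed under extensions. Then $\langle{\bf p}(\mathcal{X},\mathcal{Y})\rangle = {}^{\bot}\left(\begin{smallmatrix}\mathcal{X}^\bot\\\mathcal{Y}^\bot\end{smallmatrix}\right)$, and by the same $\mathcal{I}$-inclusion this equals ${}^{\bot}\left(\begin{smallmatrix}\mathcal{X}^\bot\\\mathcal{Y}^\bot\end{smallmatrix}\right) \cap {}^{\bot}\left(\begin{smallmatrix}\mathcal{I}\\0\end{smallmatrix}\right)$, which by Proposition~\ref{thm:main4} equals $\langle{\bf p}({}^{\bot}(\mathcal{X}^\bot), {}^{\bot}(\mathcal{Y}^\bot))\rangle$. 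Applying Proposition~\ref{thm:main1}\,(1)$\Rightarrow$(2) on both sides (the four classes $\mathcal{X}, \mathcal{Y}, {}^{\bot}(\mathcal{X}^\bot), {}^{\bot}(\mathcal{Y}^\bot)$ being all closed under extensions, and $T$ being $\mathcal{Y}$-exact), the displayed equality becomes $\mathfrak{B}^{\mathcal{X}}_{\mathcal{Y}} = \mathfrak{B}^{{}^{\bot}(\mathcal{X}^\bot)}_{{}^{\bot}(\mathcal{Y}^\bot)}$. To extract $\mathcal{X} = {}^{\bot}(\mathcal{X}^\bot)$, I test with the probe $\left(\begin{smallmatrix}A\\0\end{smallmatrix}\right)$ for any $A \in {}^{\bot}(\mathcal{X}^\bot)$: this object lies in the right-hand $\mathfrak{B}$ trivially, hence in the left-hand $\mathfrak{B}^{\mathcal{X}}_{\mathcal{Y}}$, and the cokernel condition forces $A \in \mathcal{X}$. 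To extract $\mathcal{Y} = {}^{\bot}(\mathcal{Y}^\bot)$, I use the probe ${\bf p}(0,B) = \left(\begin{smallmatrix}T(B)\\B\end{smallmatrix}\right)_{1_{T(B)}}$ for any $B \in {}^{\bot}(\mathcal{Y}^\bot)$: it is tautologically in $\mathfrak{B}^{{}^{\bot}(\mathcal{X}^\bot)}_{{}^{\bot}(\mathcal{Y}^\bot)}$, so it is in $\mathfrak{B}^{\mathcal{X}}_{\mathcal{Y}}$, which by definition of the latter class forces $B \in \mathcal{Y}$. The hereditary refinement is again read off from Lemma~\ref{resov}, now in the reverse direction.

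The step I expect to be the main obstacle is the removal of the extra factor ${}^{\bot}\left(\begin{smallmatrix}\mathcal{I}\\0\end{smallmatrix}\right)$ from the identity in Proposition~\ref{thm:main4}, together with the choice of test objects $\left(\begin{smallmatrix}A\\0\end{smallmatrix}\right)$ and ${\bf p}(0,B)$ in the backward direction — these probes are exactly those that make the intrinsic description $\mathfrak{B}^{\mathcal{X}}_{\mathcal{Y}}$ read off the component classes. Everything else is a routine composition of Propositions~\ref{thm:main1}, \ref{thm:main3}, \ref{thm:main4} and Lemma~\ref{resov}.
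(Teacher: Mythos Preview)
Your proof is correct and follows essentially the same route as the paper's. The forward direction is identical: Proposition~\ref{thm:main3} for one orthogonal, Proposition~\ref{thm:main4} for the other, with the observation $\mathcal{I}\subseteq\mathcal{X}^\bot$ to discard the extra factor; Lemma~\ref{resov} handles heredity in both. For the converse, the paper verifies by hand that the probes $\left(\begin{smallmatrix}A\\0\end{smallmatrix}\right)$ and $\left(\begin{smallmatrix}T(B)\\B\end{smallmatrix}\right)$ lie in ${}^{\bot}\left(\begin{smallmatrix}\mathcal{X}^\bot\\\mathcal{Y}^\bot\end{smallmatrix}\right)$ via an explicit splitting argument, then applies Proposition~\ref{thm:main1}; you instead invoke Proposition~\ref{thm:main4} once more to rewrite ${}^{\bot}\left(\begin{smallmatrix}\mathcal{X}^\bot\\\mathcal{Y}^\bot\end{smallmatrix}\right)$ as $\mathfrak{B}^{{}^{\bot}(\mathcal{X}^\bot)}_{{}^{\bot}(\mathcal{Y}^\bot)}$ and then use the same probes. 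This is a minor economy rather than a different method --- your splitting argument has simply been absorbed into the proof of Proposition~\ref{thm:main4}.
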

\begin{proof}
By \cite[Proposition 2.1]{EPZ} and Lemma \ref{resov}, it suffices to show that $(\langle{\bf p}(\mathcal {X},\mathcal {Y})\rangle, \left(\begin{smallmatrix}\mathcal {X}^{\bot}\\\mathcal {Y}^{\bot}\end{smallmatrix}\right))$ is a cotorsion pair in $(T\downarrow\mathscr{A})$ if and only if $(\mathcal {X}, \mathcal {X}^{\bot})$ and $(\mathcal {Y}, \mathcal {Y}^{\bot})$ are cotorsion pairs in $\mathscr{A}$ and $\mathscr{B}$, respectively.

``$\Rightarrow$". Let $(\mathcal {X}, \mathcal {X}^{\bot})$ be a cotorsion pair in $\mathscr{A}$  and $(\mathcal {Y}, \mathcal {Y}^{\bot})$ a cotorsion pair in $\mathscr{B}$. Then  all injective objects belong to $\mathcal {X}^{\bot}$. By Proposition \ref{thm:main4}, we have the following
$$\langle{\bf p}(\mathcal {X},\mathcal {Y})\rangle=\langle{\bf p}({^{\bot}(\mathcal {X}^{\bot})},{^{\bot}(\mathcal {Y}^{\bot}))}\rangle={^{\bot}\left(\begin{smallmatrix}\mathcal {X}^{\bot}\\\mathcal {Y}^{\bot}\end{smallmatrix}\right).}$$
\noindent Note that ${\langle{\bf p}(\mathcal {X},\mathcal {Y})\rangle}^{\bot}={\left(\begin{smallmatrix}\mathcal {X}^{\bot}\\\mathcal {Y}^{\bot}\end{smallmatrix}\right)}$ by Proposition \ref{thm:main3}. It follows that $(\langle{\bf p}(\mathcal {X},\mathcal {Y})\rangle, \left(\begin{smallmatrix}\mathcal {X}^{\bot}\\\mathcal {Y}^{\bot}\end{smallmatrix}\right))$ is a cotorsion pair in $(T\downarrow\mathscr{A})$.

``$\Leftarrow$". Now, we assume that $(\langle{\bf p}(\mathcal {X},\mathcal {Y})\rangle, \left(\begin{smallmatrix}\mathcal {X}^{\bot}\\\mathcal {Y}^{\bot}\end{smallmatrix}\right))$ is a cotorsion pair in $(T\downarrow\mathscr{A})$ and $\mathcal {X}$, $\mathcal {Y}$ are closed under extensions. It is sufficient to show that  ${^{\bot}(\mathcal {X}^{\bot})}\subseteq\mathcal {X}$ and ${^{\bot}(\mathcal {Y}^{\bot})}\subseteq\mathcal {Y}$. Let $A\in{^{\bot}(\mathcal {X}^{\bot})}$ and $B\in{^{\bot}(\mathcal {Y}^{\bot})}$. Then for any $M\in\mathcal {X}^{\bot}$ and $N\in\mathcal {Y}^{\bot}$, it is clear that the following exact sequences $$0\rightarrow  \left(\begin{smallmatrix}M\\N\end{smallmatrix}\right) \rightarrow \left(\begin{smallmatrix}C\\N\end{smallmatrix}\right)\rightarrow \left(\begin{smallmatrix}A\\0 \end{smallmatrix}\right)\rightarrow0 $$ and $$0\rightarrow \left(\begin{smallmatrix}M\\N\end{smallmatrix}\right) \rightarrow \left(\begin{smallmatrix}D\\L\end{smallmatrix}\right)\rightarrow \left(\begin{smallmatrix}T(B)\\B\end{smallmatrix}\right)\rightarrow0 $$
are split. This implies that $\left(\begin{smallmatrix}A\\0 \end{smallmatrix}\right), \left(\begin{smallmatrix}T(B)\\B\end{smallmatrix}\right)\in {^{\bot}\left(\begin{smallmatrix}\mathcal {X}^{\bot}\\\mathcal {Y}^{\bot}\end{smallmatrix}\right)}=\langle{\bf p}(\mathcal {X},\mathcal {Y})\rangle$, and so $A\in\mathcal{X}, B\in\mathcal{Y}$ by Proposition \ref{thm:main1}. This completes the proof.
\end{proof}

The following proposition is crucial to the proof of Theorem \ref{thm:special precovering}(1).

\begin{prop}\label{thm:cotorsion-pair1}Let $\mathscr{A}$ and $\mathscr{B}$ both have enough projective objects and enough injective objects. Assume that $T:\mathscr{B}\rightarrow\mathscr{A}$ is $\mathcal {Y}$-exact. If $(\mathcal {X}, \mathcal {X}^{\bot})$ and $(\mathcal {Y}, \mathcal {Y}^{\bot})$ are complete cotorsion pairs in $\mathscr{A}$ and $\mathscr{B}$, respectively, then so is $(\langle{\bf p}(\mathcal {X},\mathcal {Y})\rangle, \left(\begin{smallmatrix}\mathcal {X}^{\bot}\\\mathcal {Y}^{\bot}\end{smallmatrix}\right))$. Moreover, the converse holds when $T(\mathcal {Y} \bigcap \mathcal {Y}^{\bot})\subseteq{\mathcal {X}^{\bot}}$ and $\mathcal X, \mathcal Y$ are closed under extensions.
\end{prop}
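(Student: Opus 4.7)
The proof has two directions, and by Lemma \ref{lem:cotorsion-pair} the pairs in question are already cotorsion pairs in each case, so only the completeness question remains. Because $\mathbf{p}$ preserves projectives (Remark \ref{functors}(1)), $(T\downarrow\mathscr{A})$ inherits enough projective objects, so completeness of a cotorsion pair in it reduces to producing special preenvelopes into the right class.

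For the forward direction, my plan is a three-stage construction applied to an arbitrary $\left(\begin{smallmatrix}A\\B\end{smallmatrix}\right)_\varphi$: first take a special $\mathcal{Y}^\perp$-preenvelope $0\to B\to N\to Y\to 0$ in $\mathscr{B}$; then, since $T$ is $\mathcal{Y}$-exact, $T$ preserves short-exactness, so I can push out $\varphi:T(B)\to A$ along the induced inclusion $T(B)\hookrightarrow T(N)$, obtaining $0\to A\to P\to T(Y)\to 0$; finally take a special $\mathcal{X}^\perp$-preenvelope $0\to P\to M\to X\to 0$ in $\mathscr{A}$. The object $\left(\begin{smallmatrix}M\\N\end{smallmatrix}\right)$ with structure map $T(N)\to P\to M$ receives a monomorphism from $\left(\begin{smallmatrix}A\\B\end{smallmatrix}\right)$; a snake-lemma computation identifies the cokernel as $\left(\begin{smallmatrix}M/A\\Y\end{smallmatrix}\right)$ whose structure map is the monomorphism $T(Y)\hookrightarrow M/A$ with cokernel $X\in\mathcal{X}$, and Proposition \ref{thm:main1} places this cokernel in $\langle\mathbf{p}(\mathcal{X},\mathcal{Y})\rangle$.

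For the converse, the $\mathscr{B}$-side is routine: apply the comma-category special preenvelope to $\left(\begin{smallmatrix}0\\B\end{smallmatrix}\right)$ and read off the second coordinate. The $\mathscr{A}$-side is the genuine obstacle, and is precisely where the hypothesis $T(\mathcal{Y}\cap\mathcal{Y}^\perp)\subseteq\mathcal{X}^\perp$ is forced to enter. For $A\in\mathscr{A}$, applying the special preenvelope to $\left(\begin{smallmatrix}A\\0\end{smallmatrix}\right)$ produces
\[
0\to\left(\begin{smallmatrix}A\\0\end{smallmatrix}\right)\to\left(\begin{smallmatrix}M\\N\end{smallmatrix}\right)_\psi\to\left(\begin{smallmatrix}X'\\Y'\end{smallmatrix}\right)_\phi\to 0
\]
with first coordinate a surjection $\pi:M\to X'$ (kernel $A$), second coordinate an isomorphism $j:N\overset{\sim}{\to}Y'$, $M\in\mathcal{X}^\perp$ and $N\in\mathcal{Y}^\perp$. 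The second coordinate forces $N\in\mathcal{Y}\cap\mathcal{Y}^\perp$, so by hypothesis $T(N)\in\mathcal{X}^\perp$. Proposition \ref{thm:main1} makes $\phi$ monic with $X'':=\coker\phi\in\mathcal{X}$, and the pairing $T(Y')\in\mathcal{X}^\perp$ with $X''\in\mathcal{X}$ splits the sequence $0\to T(Y')\to X'\to X''\to 0$, giving $X'\cong T(Y')\oplus X''$ with $\phi$ as the first-summand inclusion.

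The critical step, which I expect to be the delicate point, is turning this splitting of $X'$ into a splitting of $M$. The comma-compatibility relation $\pi\psi=\phi T(j)$ shows that composing $\psi$ with the projection $X'\twoheadrightarrow T(Y')$ and then with $T(j)^{-1}$ produces a surjection $q:M\to T(N)$ satisfying $q\psi=1_{T(N)}$; hence $M\cong M''\oplus T(N)$, where $M'':=\ker q$ is a direct summand of $M\in\mathcal{X}^\perp$ and therefore itself lies in $\mathcal{X}^\perp$. Chasing kernels identifies $M''$ as $\pi^{-1}(X'')\subseteq M$, producing the short exact sequence $0\to A\to M''\to X''\to 0$, which is a special $\mathcal{X}^\perp$-preenvelope of $A$. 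Salce's lemma then yields completeness of $(\mathcal{X},\mathcal{X}^\perp)$.
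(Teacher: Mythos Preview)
Your proof is correct and follows essentially the same strategy as the paper's. The forward direction is identical: a special $\mathcal{Y}^\perp$-preenvelope in $\mathscr{B}$, a pushout along $\varphi$, then a special $\mathcal{X}^\perp$-preenvelope in $\mathscr{A}$, with Proposition~\ref{thm:main1} identifying the cokernel. In the converse you work with special preenvelopes of $\left(\begin{smallmatrix}A\\0\end{smallmatrix}\right)$ while the paper works with special precovers, but the key mechanism is the same: the second coordinate forces $N\in\mathcal{Y}\cap\mathcal{Y}^\perp$, the hypothesis gives $T(N)\in\mathcal{X}^\perp$, and this splits $T(N)$ off so that the remaining summand lies in $\mathcal{X}^\perp$ and carries the desired approximation of $A$; your explicit retraction $q$ with $q\psi=1_{T(N)}$ is just the dual of the paper's observation that the column $0\to T(Y)\to M\to\coker\varphi\to 0$ splits.
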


\begin{proof}
Assume that $(\mathcal {X}, \mathcal {X}^{\bot})$ is a complete cotorsion pair in $\mathscr{A}$ and $(\mathcal {Y}, \mathcal {Y}^{\bot})$ is a complete cotorsion pair in $\mathscr{B}$. For any $\left(\begin{smallmatrix}A\\B\end{smallmatrix}\right)\in$ $(T\downarrow\mathscr{A})$, there is an exact sequence $0\rightarrow B\rightarrow V\rightarrow Y\rightarrow0$ in $\mathscr{B}$ with $V\in\mathcal {Y}^{\bot}$ and $Y\in \mathcal {Y}$. Note that $T$ is $\mathcal {Y}$-exact, we get the following pushout diagram $$\xymatrix{
0\ar[r]&T(B)\ar[r]\ar[d]&T(V)\ar[r]\ar[d]&T(Y)\ar[r]\ar@{=}[d]&0\\
    0\ar[r]&A \ar[r]&C\ar[r]&T(Y) \ar[r]&0.\\
    }$$
Furthermore, we have an exact sequence $0\rightarrow C\rightarrow U\rightarrow X\rightarrow0$ in $\mathscr{A}$ with $U\in\mathcal {X}^{\bot}$ and $X\in \mathcal {X}$. Consider the following pushout diagram
$$\xymatrix{&&0\ar[d]&0\ar[d]\\0\ar[r]&A\ar[r]\ar@{=}[d]&C\ar[r]\ar[d]&T(Y)\ar[r]\ar[d]^\varphi&0\\
    0\ar[r]&A\ar[r]&U\ar[d]\ar[r]&D\ar[d] \ar[r]&0\\
    &&X\ar@{=}[r]\ar[d]&X\ar[d]\\&&0&0.\\}$$
Thus we get an exact sequence $0\rightarrow\left(\begin{smallmatrix}A\\B\end{smallmatrix}\right){\rightarrow} \left(\begin{smallmatrix}U\\V\end{smallmatrix}\right)\rightarrow\left(\begin{smallmatrix}D\\Y\end{smallmatrix}\right)_\varphi\rightarrow0$ in $(T\downarrow\mathscr{A})$ with $\left(\begin{smallmatrix}U\\V\end{smallmatrix}\right)\in\left(\begin{smallmatrix}\mathcal {X}^{\bot}\\\mathcal {Y}^{\bot}\end{smallmatrix}\right)$ and $\left(\begin{smallmatrix}D\\Y\end{smallmatrix}\right)_\varphi\in\langle{\bf p}(\mathcal {X},\mathcal {Y})\rangle$. Note that $\mathscr{A}$ and $\mathscr{B}$ have enough projective objects. Then $(T\downarrow\mathscr{A})$ has enough projective objects. So $(\langle{\bf p}(\mathcal {X},\mathcal {Y})\rangle, \left(\begin{smallmatrix}\mathcal {X}^{\bot}\\\mathcal {Y}^{\bot}\end{smallmatrix}\right))$ is a complete cotorsion pair in $(T\downarrow\mathscr{A})$ by \cite[6.3, p.595]{HJ}, as desired.

Conversely, we assume that $(\langle{\bf p}(\mathcal {X},\mathcal {Y})\rangle, \left(\begin{smallmatrix}\mathcal {X}^{\bot}\\\mathcal {Y}^{\bot}\end{smallmatrix}\right))$ is a complete cotorsion pair in $(T\downarrow\mathscr{A})$, $T(\mathcal {Y} \bigcap \mathcal {Y}^{\bot})\subseteq{\mathcal {X}^{\bot}}$ and $\mathcal X, \mathcal Y$ are closed under extensions. Then for any $B\in\mathscr{B}$, we have an exact sequence  $0\rightarrow\left(\begin{smallmatrix}U\\V\end{smallmatrix}\right){\rightarrow} \left(\begin{smallmatrix}U\\Y\end{smallmatrix}\right)\rightarrow\left(\begin{smallmatrix}0\\B\end{smallmatrix}\right)\rightarrow0$ in $(T\downarrow\mathscr{A})$ with $\left(\begin{smallmatrix}U\\Y\end{smallmatrix}\right)\in\langle{\bf p}(\mathcal {X},\mathcal {Y})\rangle$ and $\left(\begin{smallmatrix}U\\V\end{smallmatrix}\right)\in\left(\begin{smallmatrix}\mathcal {X}^{\bot}\\\mathcal {Y}^{\bot}\end{smallmatrix}\right)$. Thus we have an exact sequence $0\rightarrow V\rightarrow Y\rightarrow B \rightarrow0$ in ${\mathscr{B}}$ with $Y\in\mathcal{Y}$ and $V\in\mathcal {Y}^{\bot}$ which means that $(\mathcal {Y}, \mathcal {Y}^{\bot})$ is a complete cotorsion pair in $\mathscr{B}$.

In further, for any $A\in\mathscr{A}$, we have an exact sequence  $0\rightarrow\left(\begin{smallmatrix}K\\Y\end{smallmatrix}\right)_{\phi}{\rightarrow} \left(\begin{smallmatrix}M\\Y\end{smallmatrix}\right)_\varphi\rightarrow\left(\begin{smallmatrix}A\\0\end{smallmatrix}\right)\rightarrow0$ in $(T\downarrow\mathscr{A})$  with $\left(\begin{smallmatrix}M\\Y\end{smallmatrix}\right)_\varphi\in\langle{\bf p}(\mathcal {X},\mathcal {Y})\rangle$ and $\left(\begin{smallmatrix}K\\Y\end{smallmatrix}\right)_\phi\in\left(\begin{smallmatrix}\mathcal {X}^{\bot}\\\mathcal {Y}^{\bot}\end{smallmatrix}\right)$. Thus we obtain that $Y\in\mathcal {Y}\bigcap\mathcal {Y}^{\bot}, \coker\varphi\in\mathcal{X}, K\in\mathcal {X}^{\bot}$ and $\varphi$ is monic. Consequently,
we have the following exact commutative diagram:
$$\xymatrix{&0\ar[d]&0\ar[d]\\&T(Y)\ar@{=}[r]\ar[d]^{\phi}&T(Y)\ar[d]^\varphi\\
0\ar[r]&K\ar[r]^f\ar[d]^{\tilde{\pi}}&M\ar[r]\ar[d]^\pi&A\ar[r]\ar@{=}[d]&0\\
    0\ar[r]&\coker\phi \ar[d]\ar[r]^{\tilde{f}}&\coker\varphi\ar[d]\ar[r]&A \ar[r]&0\\
    &0&0.\\}$$
By hypotheses, $T(Y)\in\mathcal{X}^{\bot}$. It follows that the middle column splits which implies that the left column is split, and hence
 $\coker\phi$ is a direct summand of $K$. Thus $\coker\phi\in\mathcal {X}^{\bot}$, that is, the exact sequence $0\rightarrow\coker\phi\rightarrow\coker\varphi\rightarrow A\rightarrow0$ implies $(\mathcal {X}, \mathcal {X}^{\bot})$ is a complete cotorsion pair in $\mathscr{A}$.
\end{proof}
\begin{rem}\label{remark:3.5} {\rm If we replace the condition ``$T(\mathcal {Y} \bigcap \mathcal {Y}^{\bot})\subseteq{\mathcal {X}^{\bot}}$" with ``$T(\mathcal {Y} \bigcap \mathcal {Y}^{\bot})\subseteq{\mathcal {X}}$" in Proposition \ref{thm:cotorsion-pair1}, the result still holds. In the case $T(\mathcal {Y} \bigcap \mathcal {Y}^{\bot})\subseteq{\mathcal {X}}$, for any $A\in\mathscr{A}$, it is easy to check that the exact sequence $0\to K\to M\to A\to$ in the third commutative diagram in Proposition \ref{thm:cotorsion-pair1} satisfies that $K\in{\mathcal{X}^{\perp}}$ and $M\in{\mathcal{X}}$. This implies that $(\mathcal {X}, \mathcal {X}^{\bot})$ is a complete cotorsion pair in $\mathscr{A}$.}
\end{rem}

As a corollary of Proposition \ref{thm:cotorsion-pair1} and Corollary \ref{corollary:3.6}, we re-obtain \cite[Theorem 1.1(2)]{XZZ}.
\begin{cor} (see \cite{XZZ})\label{corollary:4.6} Let $R$ and $S$ be Artin algebras, and $\Lambda=\left(
                                                                                 \begin{smallmatrix}
                                                                                   R & M \\
                                                                                   0 & S \\
                                                                                 \end{smallmatrix}
                                                                               \right)
$ a triangular matrix algebra. If $M$ is finitely generated projective left $S$-module, then $\mathcal{M}(R, M, S)$ is a functorially finite subcategory of ${\rm mod}\Lambda$, and has Auslander-Reiten sequences.
\end{cor}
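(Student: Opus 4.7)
The plan is to combine Corollary \ref{corollary:3.6} with Proposition \ref{thm:cotorsion-pair1}. By Corollary \ref{corollary:3.6} we may replace $\mathcal{M}(R,M,S)$ by $\langle{\bf p}({\rm mod}R,{\rm mod}S)\rangle$ inside ${\rm mod}\Lambda$, so it suffices to verify that this class is functorially finite in ${\rm mod}\Lambda$; once this is established, Auslander--Reiten theory for Artin algebras will give the existence of almost split sequences.

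Since $M$ is projective as a (one-sided) $S$-module, the functor $T\cong M\otimes_S-\colon{\rm mod}S\to{\rm mod}R$ is exact and preserves finitely generated modules; in particular $T$ is ${\rm mod}S$-exact in the sense of the paper. Because $R$ and $S$ are Artin algebras, ${\rm mod}R$ and ${\rm mod}S$ have enough projective and enough injective objects, and the pairs $({\rm mod}R,\mathcal{I}_R)$ and $({\rm mod}S,\mathcal{I}_S)$ (with $\mathcal{I}_R,\mathcal{I}_S$ the classes of finitely generated injective modules) are easily checked to be complete cotorsion pairs: identity maps provide trivial special precovers, while injective envelopes provide special preenvelopes. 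Applying Proposition \ref{thm:cotorsion-pair1} to these two cotorsion pairs yields a complete cotorsion pair
\[
\Big(\langle{\bf p}({\rm mod}R,{\rm mod}S)\rangle,\ \left(\begin{smallmatrix}\mathcal{I}_R\\\mathcal{I}_S\end{smallmatrix}\right)\Big)
\]
in ${\rm mod}\Lambda$. In particular $\mathcal{M}(R,M,S)$ is special precovering, hence contravariantly finite.

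For covariant finiteness I would argue by an analogous construction: given $\bigl(\begin{smallmatrix}A\\B\end{smallmatrix}\bigr)\in{\rm mod}\Lambda$, take injective envelopes $A\hookrightarrow I_A$ in ${\rm mod}R$ and $B\hookrightarrow I_B$ in ${\rm mod}S$, and combine them via the pushout technique used in the proof of Proposition \ref{thm:cotorsion-pair1} to obtain a monomorphism $\bigl(\begin{smallmatrix}A\\B\end{smallmatrix}\bigr)\hookrightarrow{\bf p}(X,I_B)$ whose cokernel lies in $^{\bot}\langle{\bf p}({\rm mod}R,{\rm mod}S)\rangle$, i.e.\ a special $\mathcal{M}(R,M,S)$-preenvelope. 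The main obstacle is precisely this covariant half: Proposition \ref{thm:cotorsion-pair1} is phrased only on the precovering side, and one must verify that all constructions stay inside finitely generated modules, which is where the hypothesis that $M$ is finitely generated projective over $S$ is essential — it guarantees both that $T$ is exact and that $T$ sends finitely generated modules to finitely generated modules, so the assembled objects actually live in ${\rm mod}\Lambda$. Once functorial finiteness is secured, the existence of Auslander--Reiten sequences in $\mathcal{M}(R,M,S)$ follows from the Auslander--Smal\o\ theorem for functorially finite subcategories of ${\rm mod}\Lambda$.
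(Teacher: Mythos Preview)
Your argument for the precovering (contravariantly finite) half is exactly the paper's: identify $\mathcal{M}(R,M,S)$ with $\langle{\bf p}({\rm mod}R,{\rm mod}S)\rangle$ via Corollary~\ref{corollary:3.6}, then apply Proposition~\ref{thm:cotorsion-pair1} to the trivial complete cotorsion pairs $({\rm mod}R,\mathcal{I}_R)$ and $({\rm mod}S,\mathcal{I}_S)$.

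Where you diverge is in the treatment of covariant finiteness and Auslander--Reiten sequences. The paper does not attempt these from scratch at all: it observes that, by the proof of \cite[Theorem~1.1(2)]{XZZ}, the entire statement reduces to showing that $\mathcal{M}(R,M,S)$ is precovering in ${\rm mod}\Lambda$. In other words, covariant finiteness and the existence of AR sequences are already handled in \cite{XZZ}; the paper's only new input is the precovering half, supplied by Proposition~\ref{thm:cotorsion-pair1}.

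Your direct attack on covariant finiteness, as written, has a genuine gap. The complete cotorsion pair you produce yields special $\left(\begin{smallmatrix}\mathcal{I}_R\\\mathcal{I}_S\end{smallmatrix}\right)$-preenvelopes, not $\langle{\bf p}({\rm mod}R,{\rm mod}S)\rangle$-preenvelopes. For a \emph{special} $\mathcal{M}(R,M,S)$-preenvelope the cokernel would have to lie in $^{\bot}\langle{\bf p}({\rm mod}R,{\rm mod}S)\rangle$, which by the computation in the proof of Corollary~\ref{Frobenius} equals $\langle{\bf p}(\mathcal{P}_R,\mathcal{P}_S)\rangle$, i.e.\ the projectives of ${\rm mod}\Lambda$; a pushout along injective envelopes will not produce such a cokernel in general. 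The left approximations in \cite{XZZ} are obtained by an entirely different construction, and the paper simply inherits that rather than reproving it.
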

\begin{proof} By the proof of Theorem 1.1(2) in \cite[p.34]{XZZ}, it suffices to show that $\mathcal{M}(R, M, S)$ is precovering in ${\rm mod}\Lambda$. This is true by Proposition \ref{thm:cotorsion-pair1} and Corollary \ref{corollary:3.6}.
\end{proof}

\begin{cor}\label{corollary:4.5'} Let $\Lambda=\left(
                                                                                 \begin{smallmatrix}
                                                                                   R & M \\
                                                                                   0 & S \\
                                                                                 \end{smallmatrix}
                                                                               \right)
$ be a triangular matrix ring, and let $\mathcal{X}_{1}$ and $\mathcal{X}_{2}$ be two classes of left $R$-modules, $\mathcal{Y}_1$ and $\mathcal{Y}_{2}$ be two classes of left $S$-modules. Assume that ${\rm Tor}_{1}^{S}(M,Y)=0$ for any $Y\in{\mathcal{Y}_1}$.
If $(\mathcal{X}_1, \mathcal{X}_2)$ and $(\mathcal{Y}_1, \mathcal{Y}_2)$ are hereditary complete cotorsion pairs in $\Mod R$ and $\Mod S$, respectively, then so is $(\mathfrak{B}^{\mathcal{X}_1}_{\mathcal{Y}_1}, \left(\begin{smallmatrix}\mathcal{X}_2\\ \mathcal{Y}_2\end{smallmatrix}\right))$. Moreover, the converse holds when $M\otimes_{S}N\subseteq{\mathcal {X}_2}$ or $M\otimes_{S}N\subseteq{\mathcal{X}_1}$ for any $N \in {\mathcal {Y}_1 \bigcap \mathcal {Y}_2}$.
\end{cor}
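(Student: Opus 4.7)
The plan is to reduce everything to the already-established results by recognising $(\mathfrak{B}^{\mathcal{X}_1}_{\mathcal{Y}_1}, \left(\begin{smallmatrix}\mathcal{X}_2\\ \mathcal{Y}_2\end{smallmatrix}\right))$ as an instance of the cotorsion pair on a comma category. First, by Example \ref{e exact}(1), $\Mod\Lambda\cong(T\downarrow \Mod R)$ with $T=M\otimes_S-$. The assumption $\mathrm{Tor}^S_1(M,Y)=0$ for every $Y\in\mathcal{Y}_1$ says exactly that $T$ preserves exactness of sequences $0\to B\to B'\to Y\to 0$ with $Y\in\mathcal{Y}_1$, i.e.\ $T$ is $\mathcal{Y}_1$-exact. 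Moreover, whenever $(\mathcal{X}_1,\mathcal{X}_2)$ and $(\mathcal{Y}_1,\mathcal{Y}_2)$ are cotorsion pairs, the classes $\mathcal{X}_1={}^{\bot}\mathcal{X}_2$ and $\mathcal{Y}_1={}^{\bot}\mathcal{Y}_2$ are automatically closed under extensions; symmetrically the left-half $\mathfrak{B}^{\mathcal{X}_1}_{\mathcal{Y}_1}$ of any cotorsion pair in $\Mod\Lambda$ is closed under extensions.

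For the forward direction I would proceed in three steps. First, apply Proposition \ref{thm:main1} (using $T$ is $\mathcal{Y}_1$-exact together with $\mathcal{X}_1,\mathcal{Y}_1$ closed under extensions) to identify $\mathfrak{B}^{\mathcal{X}_1}_{\mathcal{Y}_1}=\langle{\bf p}(\mathcal{X}_1,\mathcal{Y}_1)\rangle$. Second, invoke Lemma \ref{lem:cotorsion-pair} to conclude that $(\langle{\bf p}(\mathcal{X}_1,\mathcal{Y}_1)\rangle,\left(\begin{smallmatrix}\mathcal{X}_2\\ \mathcal{Y}_2\end{smallmatrix}\right))$ is a hereditary cotorsion pair in $(T\downarrow \Mod R)=\Mod\Lambda$. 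Third, apply Proposition \ref{thm:cotorsion-pair1} to upgrade this to a complete cotorsion pair, which uses precisely the completeness of $(\mathcal{X}_1,\mathcal{X}_2)$ and $(\mathcal{Y}_1,\mathcal{Y}_2)$ together with $\mathcal{Y}_1$-exactness of $T$.

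For the converse direction, start from the hypothesis that $(\mathfrak{B}^{\mathcal{X}_1}_{\mathcal{Y}_1},\left(\begin{smallmatrix}\mathcal{X}_2\\ \mathcal{Y}_2\end{smallmatrix}\right))$ is a hereditary complete cotorsion pair. Since $\mathfrak{B}^{\mathcal{X}_1}_{\mathcal{Y}_1}$ is closed under extensions and $T$ is $\mathcal{Y}_1$-exact, Proposition \ref{thm:main1} (implication $(3)\Rightarrow(1)$) tells us that $\mathcal{X}_1$ and $\mathcal{Y}_1$ are closed under extensions. Next, by Proposition \ref{thm:main3}, the right-orthogonal computes as $\mathfrak{B}^{\mathcal{X}_1}_{\mathcal{Y}_1}{}^{\bot}=\left(\begin{smallmatrix}\mathcal{X}_1^{\bot}\\ \mathcal{Y}_1^{\bot}\end{smallmatrix}\right)$; testing against objects of the form $\left(\begin{smallmatrix}A\\0\end{smallmatrix}\right)$ and $\left(\begin{smallmatrix}0\\B\end{smallmatrix}\right)$ (which lie in $\left(\begin{smallmatrix}\mathcal{X}_2\\ \mathcal{Y}_2\end{smallmatrix}\right)$ since $0\in\mathcal{Y}_2$ and $0\in\mathcal{X}_2$) identifies $\mathcal{X}_2=\mathcal{X}_1^{\bot}$ and $\mathcal{Y}_2=\mathcal{Y}_1^{\bot}$. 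Then Lemma \ref{lem:cotorsion-pair} yields that $(\mathcal{X}_1,\mathcal{X}_2)$ and $(\mathcal{Y}_1,\mathcal{Y}_2)$ are hereditary cotorsion pairs in $\Mod R$ and $\Mod S$, and Proposition \ref{thm:cotorsion-pair1} (when $M\otimes_S N\subseteq\mathcal{X}_2$) or its variant in Remark \ref{remark:3.5} (when $M\otimes_S N\subseteq\mathcal{X}_1$) promotes them to complete cotorsion pairs.

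The only genuinely subtle point, and the place I expect to have to be careful, is the bookkeeping in the converse: one must justify the equalities $\mathcal{X}_2=\mathcal{X}_1^{\bot}$ and $\mathcal{Y}_2=\mathcal{Y}_1^{\bot}$ from the matricial equality of orthogonal classes, and then correctly match the hypothesis $M\otimes_S N\subseteq\mathcal{X}_2$ (resp.\ $\subseteq\mathcal{X}_1$) with the condition $T(\mathcal{Y}_1\cap\mathcal{Y}_2)\subseteq\mathcal{X}_2$ (resp.\ the alternative formulation in Remark \ref{remark:3.5}) so as to actually apply Proposition \ref{thm:cotorsion-pair1}. Apart from this piece of translation, everything is a direct invocation of results from Section~2 and the preceding part of Section~3.
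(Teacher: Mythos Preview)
Your proposal is correct and follows essentially the same route as the paper: identify $\Mod\Lambda$ with $(T\downarrow\Mod R)$ via Example~\ref{e exact}(1), observe that the $\mathrm{Tor}$-hypothesis makes $T$ $\mathcal{Y}_1$-exact, use Proposition~\ref{thm:main1} to get $\mathfrak{B}^{\mathcal{X}_1}_{\mathcal{Y}_1}=\langle{\bf p}(\mathcal{X}_1,\mathcal{Y}_1)\rangle$ and extension closure of $\mathcal{X}_1,\mathcal{Y}_1$, use Proposition~\ref{thm:main3} and the test objects $\left(\begin{smallmatrix}A\\0\end{smallmatrix}\right)$, $\left(\begin{smallmatrix}0\\B\end{smallmatrix}\right)$ to obtain $\mathcal{X}_2=\mathcal{X}_1^{\bot}$ and $\mathcal{Y}_2=\mathcal{Y}_1^{\bot}$, and then invoke Lemma~\ref{lem:cotorsion-pair}, Proposition~\ref{thm:cotorsion-pair1} and Remark~\ref{remark:3.5}. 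The only cosmetic difference is that the paper packages both directions together by first deriving $\mathcal{X}_2=\mathcal{X}_1^{\bot}$, $\mathcal{Y}_2=\mathcal{Y}_1^{\bot}$ under the cotorsion-pair hypothesis and then citing the three results at once, whereas you separate the forward and converse arguments.
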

\begin{proof} If we define $T\cong M\otimes_{S}-: \Mod S \ra \Mod R$, then $\Mod {\Lambda}$ is equivalent to the comma category $(T\downarrow \Mod R)$ by Example \ref{e exact}(1). Assume that $(\mathfrak{B}^{\mathcal{X}_1}_{\mathcal{Y}_1}, \left(\begin{smallmatrix}\mathcal{X}_2\\ \mathcal{Y}_2\end{smallmatrix}\right))$ is a cotorsion pair. Then $0\in{\mathcal{X}_1\cap {\mathcal{X}_2}}$, $0\in{\mathcal{Y}_1\cap {\mathcal{Y}_2}}$ and $\mathfrak{B}^{\mathcal{X}_1}_{\mathcal{Y}_1}$ is closed under extensions. It follows from Proposition \ref{thm:main1} that $\langle{\bf p}(\mathcal {X}_1,\mathcal {Y}_1)\rangle=\mathfrak{B}^{\mathcal{X}_1}_{\mathcal{Y}_1}$. Let $M$ be a left $R$-module in $\mathcal{X}_2$. Then $\left(\begin{smallmatrix}M\\0\end{smallmatrix}\right)\in \left(\begin{smallmatrix}\mathcal{X}_2\\\mathcal{Y}_2\end{smallmatrix}\right)=\langle{\bf p}(\mathcal {X}_1,\mathcal {Y}_1)\rangle^{\perp}=\left(\begin{smallmatrix}\mathcal{X}_{1}^{\bot}\\\mathcal {Y}_1^{\bot}\end{smallmatrix}\right)$ by Lemma \ref{thm:main3}. This implies that $M\in{\mathcal{X}_{1}^{\bot}}$. On the other hand, let $N$ be a left $R$-module in $\mathcal{X}_{1}^{\bot}$. It follows that $\left(\begin{smallmatrix}N\\0\end{smallmatrix}\right)\in \left(\begin{smallmatrix}\mathcal{X}_1^{\perp}\\\mathcal{Y}_1^{\perp}\end{smallmatrix}\right)=\langle{\bf p}(\mathcal {X}_1,\mathcal {Y}_1)\rangle^{\perp}=\left(\begin{smallmatrix}\mathcal{X}_{2}\\\mathcal {Y}_2\end{smallmatrix}\right)$ by Lemma \ref{thm:main3}. So we have $N\in{\mathcal{X}_2}$ and $\mathcal{X}_1^{\perp}=\mathcal{X}_2$. Similarly, one can show $\mathcal{Y}_1^{\perp}=\mathcal{Y}_2$. So the result holds by Lemma \ref{lem:cotorsion-pair}, Proposition \ref{thm:cotorsion-pair1} and Remark \ref{remark:3.5}.
\end{proof}

\begin{rem}\label{rem:3.7} {\rm We note that Corollary \ref{corollary:4.5'} refines a result obtained by Mao in \cite{Mao}. More precisely, the conditions that ``${\rm Tor}_{i}^{A}(U,^{\perp}\mathcal{C}_{2})=0$ for any $i\geq1$" and ``${\rm Tor}_{i}^{A}(U,\mathcal{C}_{1})=0$ for any $i\geq2$" in \cite[Theorem 5.6(1)]{Mao} are superfluous. Also, our proof here is different from
that in \cite{Mao}.}
\end{rem}

Let $\mathcal{L}$ be a class of objects in an abelian category $\mathscr{D}$. We denote by $\mathsf{Smd}(\mathcal{L})$ the class of direct summands of objects in $\mathcal L$.

\begin{lem}\label{smd}Let $\mathscr{D}$ be an abelian category with enough injective objects. If $\mathcal L$ is special precovering in $\mathscr{D}$, then $(\mathsf{Smd}(\mathcal{L}),\mathsf{Smd}(\mathcal{L})^\bot)$ is a complete cotorsion pair.
\end{lem}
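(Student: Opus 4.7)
The plan is to verify the two requirements separately: first that $(\mathsf{Smd}(\mathcal{L}),\mathsf{Smd}(\mathcal{L})^{\bot})$ really is a cotorsion pair, and second that it is complete. The key preliminary observation I would record is that $\mathsf{Smd}(\mathcal{L})^{\bot}=\mathcal{L}^{\bot}$. One inclusion is immediate since $\mathcal{L}\subseteq\mathsf{Smd}(\mathcal{L})$. For the other, if $L=X\oplus X'$ with $L\in\mathcal{L}$, then $\mathrm{Ext}^{1}_{\mathscr{D}}(X,M)$ is a direct summand of $\mathrm{Ext}^{1}_{\mathscr{D}}(L,M)$ for any $M$, so the vanishing for $\mathcal{L}$ transfers to $\mathsf{Smd}(\mathcal{L})$.

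Next I would show that $\mathsf{Smd}(\mathcal{L})$ is itself special precovering. Given any object $M\in\mathscr{D}$, pick a special $\mathcal{L}$-precover $0\to K\to L\to M\to 0$ with $L\in\mathcal{L}$ and $K\in\mathcal{L}^{\bot}$. Because $\mathcal{L}\subseteq\mathsf{Smd}(\mathcal{L})$ and $K\in\mathcal{L}^{\bot}=\mathsf{Smd}(\mathcal{L})^{\bot}$ by the observation above, the same short exact sequence is already a special $\mathsf{Smd}(\mathcal{L})$-precover of $M$.

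The nontrivial cotorsion-pair identity to check is $^{\bot}(\mathsf{Smd}(\mathcal{L})^{\bot})=\mathsf{Smd}(\mathcal{L})$; the inclusion $\supseteq$ is formal. For $\subseteq$, take any $X\in{}^{\bot}(\mathsf{Smd}(\mathcal{L})^{\bot})$ and apply the special $\mathcal{L}$-precovering of $X$ to get $0\to K\to L\to X\to 0$ with $L\in\mathcal{L}$ and $K\in\mathcal{L}^{\bot}=\mathsf{Smd}(\mathcal{L})^{\bot}$. Then $\mathrm{Ext}^{1}_{\mathscr{D}}(X,K)=0$, so the sequence splits, exhibiting $X$ as a direct summand of $L\in\mathcal{L}$, i.e.\ $X\in\mathsf{Smd}(\mathcal{L})$. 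This is really the only step that uses anything beyond bookkeeping, and it is exactly the trick that bypasses the missing ``closed under summands'' hypothesis that would normally be invoked in \cite[Lemma 2.2.6]{Gober}.

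Finally, for completeness of the cotorsion pair, I would invoke \cite[6.3, p.595]{HJ} as cited earlier: since $\mathscr{D}$ has enough injective objects, a cotorsion pair is complete as soon as its left-hand class is special precovering, which we have already established. There is no real obstacle; the only subtle point is the splitting argument in the previous paragraph, and even that is short.
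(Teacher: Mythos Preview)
Your proof is correct and follows essentially the same approach as the paper: both use the identity $\mathsf{Smd}(\mathcal{L})^{\bot}=\mathcal{L}^{\bot}$ and the splitting of a special $\mathcal{L}$-precover of any $X\in{}^{\bot}(\mathsf{Smd}(\mathcal{L})^{\bot})$ to obtain the cotorsion pair equality. You spell out slightly more than the paper does (the justification of $\mathsf{Smd}(\mathcal{L})^{\bot}=\mathcal{L}^{\bot}$ and the appeal to \cite[6.3, p.595]{HJ} for completeness), but the argument is the same.
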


\pf Note that $\mathsf{Smd}(\mathcal{L})^\bot=\mathcal{L}^\bot$. Then it is clear that $\mathsf{Smd}(\mathcal{L})$ is a special precovering class. In the sequel, we claim that ${^\bot(\mathsf{Smd}(\mathcal{L})^\bot)}\subseteq\mathsf{Smd}(\mathcal{L})$. For any $L\in{^\bot(\mathsf{Smd}(\mathcal{L})^\bot)}$, there is an exact sequence $0\rightarrow K\rightarrow \overline{L}\rightarrow L\rightarrow0$ with $\overline{L}\in\mathcal L$ and $K\in\mathcal{L}^\bot$ since $\mathcal L$ is  special precovering. Thus the above sequence is split as $\mathsf{Smd}(\mathcal{L})^\bot=\mathcal{L}^\bot$. It follows that $L\in\mathsf{Smd}(\mathcal{L})$, and so ${^\bot(\mathsf{Smd}(\mathcal{L})^\bot)}=\mathsf{Smd}(\mathcal{L})$. We complete the proof.
\epf

We are now in a position to prove Theorem \ref{thm:special precovering}(1).

{\bf Proof of Theorem \ref{thm:special precovering}(1).} At first, we claim that $T$ is $\mathsf{Smd}(\mathcal {Y})$-exact. For any exact sequence $0\rightarrow B_1\rightarrow B_0\rightarrow {Y}\rightarrow0$ in $\mathscr{B}$ with ${Y}\in\mathsf{Smd}(\mathcal {Y})$, there is an induced exact sequence
$0\rightarrow B_1\rightarrow B_0\bigoplus M\rightarrow {Y}\bigoplus M\rightarrow0$ where ${ Y}\bigoplus M\in\mathcal {Y}$. Then we obtain the following commutative diagram $$\xymatrix{
0\ar[r]&T(B_1)\ar[r]\ar@{=}[d]&T(B_0)\ar[r]\ar[d]&T(Y)\ar[r]\ar[d]&0\\
    0\ar[r]&T(B_1)\ar[r]& T(B_0)\bigoplus T(M)\ar[r]&T(Y)\bigoplus T(M) \ar[r]&0,\\
    }$$
where the bottom row is exact since $T$ is $\mathcal {Y}$-exact. It follows that the first row is exact.

Since $(\mathsf{Smd}(\mathcal {X}), \mathsf{Smd}(\mathcal {X})^\bot)$ and $(\mathsf{Smd}(\mathcal {Y}), \mathsf{Smd}(\mathcal {Y})^\bot)$ are complete cotorsion pairs by Lemma \ref{smd}, the pair $(\langle{\bf p}(\mathsf{Smd}(\mathcal {X}),\mathsf{Smd}(\mathcal {Y}))\rangle, \left(\begin{smallmatrix}\mathsf{Smd}(\mathcal {X})^{\bot}\\\mathsf{Smd}(\mathcal {Y})^{\bot}\end{smallmatrix}\right))$ is a complete cotorsion pair in $(T\downarrow\mathscr{A})$ by Proposition \ref{thm:cotorsion-pair1}. Thus for any object $\left(\begin{smallmatrix}A\\B\end{smallmatrix}\right)\in(T\downarrow\mathscr{A})$, there is an exact sequence $0\rightarrow\left(\begin{smallmatrix}\overline{C}\\\overline{D}\end{smallmatrix}\right)_{\overline{\psi}}{\rightarrow} \left(\begin{smallmatrix}\overline{M}\\\overline{Y}\end{smallmatrix}\right)_{\overline{\varphi}}\rightarrow\left(\begin{smallmatrix}A\\B\end{smallmatrix}\right)\rightarrow0$ with $\left(\begin{smallmatrix}\overline{M}\\\overline{Y}\end{smallmatrix}\right)_{\overline{\varphi}}\in\langle{\bf p}(\mathsf{Smd}(\mathcal {X}),\mathsf{Smd}(\mathcal {Y}))\rangle$ and $\left(\begin{smallmatrix}\overline{C}\\\overline{D}\end{smallmatrix}\right)_{\overline{\psi}}\in\left(\begin{smallmatrix}\mathsf{Smd}(\mathcal {X})^{\bot}\\\mathsf{Smd}(\mathcal {Y})^{\bot}\end{smallmatrix}\right)$.

Since $\mathcal{Y}$ is special precovering, we have an exact sequence $0\rightarrow K\rightarrow Y\rightarrow\overline{Y}\rightarrow0$ in $\mathscr{B}$ with $Y\in\mathcal{Y}$ and $K\in\mathcal{Y}^\bot$. It follows from $\mathcal{Y}^\bot=\mathsf{Smd}(\mathcal {Y})^\bot$ that $Y\cong\overline{Y}\bigoplus K$. Similarly, one can show that there exists an exact sequence $0\rightarrow U\rightarrow X'\rightarrow\coker\overline{\varphi}\rightarrow0$ in $\mathscr{A}$ with $ U\bigoplus{\coker\overline{\varphi}}\cong X'\in{\mathcal{X}}$ and $U\in\mathcal{X}^\bot$ by noting that ${\coker\overline{\varphi}}\in{\mathsf{Smd}(\mathcal {X})}$ and $\mathcal{X}$ is special precovering. For $T(K)\in\mathscr{A}$, there is an exact sequence $0\rightarrow T(K)\stackrel{i}{\rightarrow} N\rightarrow \overline{X}\rightarrow0$ in $\mathscr{A}$ with $N\in\mathsf{Smd}(\mathcal {X})^\bot=\mathcal {X}^\bot$ and $\overline{X}\in\mathsf{Smd}(\mathcal {X})$. Moreover, there is an exact sequence $0\rightarrow L\rightarrow X\rightarrow \overline{X}\rightarrow0$ in $\mathscr{A}$ with $X\in\mathcal{X}$ and $L\in\mathcal{X}^\bot$, and so $X\cong\overline{X}\bigoplus L$. Therefore, we have an exact sequence in $(T\downarrow\mathscr{A})$
$$0\rightarrow\left(\begin{smallmatrix}C\\D\end{smallmatrix}\right)_{\psi}{\rightarrow} \left(\begin{smallmatrix}M\\{Y}\end{smallmatrix}\right)_{\varphi}\rightarrow\left(\begin{smallmatrix}A\\B\end{smallmatrix}\right)\rightarrow0$$
with $D\cong \overline{D}\bigoplus K, Y\cong\overline{Y}\bigoplus K, C\cong \overline{C}\bigoplus U\bigoplus N \bigoplus L, M\cong \overline{M}\bigoplus U\bigoplus N\bigoplus L$,
$\psi=\left(\begin{smallmatrix}\overline{\psi}&0\\0&0\\0&i\\0&0\\\end{smallmatrix}\right)$
and $\varphi=\left(\begin{smallmatrix}\overline{\varphi}&0\\0&0\\0&i\\0&0\\\end{smallmatrix}\right)$. Clearly, $\left(\begin{smallmatrix}C\\D\end{smallmatrix}\right)_{\psi}\in\left(\begin{smallmatrix}\mathcal {X}^{\bot}\\\mathcal {Y}^{\bot}\end{smallmatrix}\right)$. Note that we have an exact sequence
$0\rightarrow\left(\begin{smallmatrix}T(Y)\\Y\end{smallmatrix}\right){\rightarrow} \left(\begin{smallmatrix}{M}\\Y\end{smallmatrix}\right)_{\varphi}\rightarrow\left(\begin{smallmatrix}X'\bigoplus X\\0\end{smallmatrix}\right)\rightarrow0$ in $(T\downarrow\mathscr{A})$. Then $\left(\begin{smallmatrix}M\\Y\end{smallmatrix}\right)_{\varphi}\in\langle{\bf p}(\mathcal {X},\mathcal {Y})\rangle$. So we obtain that $\langle{\bf p}(\mathcal {X},\mathcal {Y})\rangle$ is special precovering.

Conversely, assume that $\langle{\bf p}(\mathcal {X},\mathcal {Y})\rangle$ is special precovering,
for any $A\in\mathscr A$ and $B\in\mathscr B$. Then there is an exact sequence $\xi: 0\rightarrow\left(\begin{smallmatrix}C\\D\end{smallmatrix}\right)_\phi{\rightarrow} \left(\begin{smallmatrix}M\\{Y}\end{smallmatrix}\right)_\varphi\rightarrow\left(\begin{smallmatrix}A\\B\end{smallmatrix}\right)\rightarrow0$ in $(T\downarrow\mathscr{A})$ with
$\left(\begin{smallmatrix}M\\Y\end{smallmatrix}\right)\in\langle{\bf p}(\mathcal {X},\mathcal {Y})\rangle$ and $\left(\begin{smallmatrix}C\\D\end{smallmatrix}\right)\in\langle{\bf p}(\mathcal {X},\mathcal {Y})\rangle^\bot$. By Proposition \ref{thm:main3}, $\langle{\bf p}(\mathcal {X},\mathcal {Y})\rangle^\bot=\left(\begin{smallmatrix}\mathcal {X}^{\bot}\\\mathcal {Y}^{\bot}\end{smallmatrix}\right)$. So, the exact sequence $0\rightarrow D\rightarrow Y\rightarrow B\rightarrow0$ implies that $\mathcal {Y}$ is special precovering.  Moreover, let $B=0$ for the sequence $\xi$, then we have an exact commutative diagram $$\xymatrix{&0 \ar[d]&0\ar[d]&&\\
0\ar[r]&T(D)\ar[r]^{\simeq}\ar[d]^{\phi}&T(Y)\ar[d]^\varphi\ar[r]&T(B)=0\ar[d]\ar[r]&0\\
0\ar[r]&C\ar[r]\ar[d]&M\ar[r]\ar[d]&A\ar[r]\ar@{=}[d]&0\\
0\ar[r]&\coker\phi\ar[d]\ar[r]&\coker\ar[d]\varphi\ar[r]&A \ar[r]&0\\
&0&0.&&\\}$$
Note that $Y\in\mathcal {Y}\bigcap\mathcal {Y}^{\bot}, \coker\varphi\in\mathcal{X}$, so the middle column is split. This implies that the left column is split. If follows that  $\coker\phi\in\mathcal{X}^{\bot}$ since $C\in\mathcal{X}^{\bot}$. Thus the exact sequence $0\rightarrow\coker\phi\rightarrow\coker\varphi\rightarrow A\rightarrow0$ yields the special $\mathcal{X}$-precover of $A$.

\begin{rem}\label{rem:3.5}{\rm We can not omit the condition that $T$ is  $\mathcal {Y}$-exact in Theorem \ref{thm:special precovering}(1). For exampe, let $\Lambda$ be the $k$-algebra given by quiver $\xymatrix{\underset{3}{\bullet}\ar[r]^{\alpha}&\underset{2}{\bullet}\ar[r]^{\beta}&\underset{1}{\bullet}}$ with a relation $\beta\alpha$ and $P(i) \ ( \textrm{resp.} \ I(i), S(i))$ denotes the indecomposible projective (resp. injective, simple) module corresponding to the vertex $i$. In fact, $\Lambda=\left(\begin{smallmatrix}k & k \\0 & kA_2\\\end{smallmatrix}\right)$ with $A_2:= \xymatrix{\underset{3}{\bullet}\ar[r]^{\alpha}&\underset{2}{\bullet}}$.
Let \bc$(\mathcal {X},\mathcal {X}^{\bot})=(\mbox{Mod}k, \mbox{Mod}k)$ and $(\mathcal {Y},\mathcal {Y}^{\bot})=(\mbox {Mod}kA_2, \mathcal{I}(kA_2))$. \ec Applying $T=M\bigotimes_{kA_2}-$ to the exact sequence $0\rightarrow P(2)\stackrel{\alpha}{\rightarrow}P(3)\rightarrow S(3)\rightarrow0$ in $\mbox{Mod}kA_2$, we obtain that $T$ is not $\mathcal {Y}$-exact. Note that $\langle{\bf p}(\mbox{Mod}k,\mbox{Mod}kA_2)\rangle=\{P(3),P(2),P(1),S(3)\}$. It follows that ${\langle{\bf p}(\mbox{Mod}k,\mbox{Mod}kA_2)\rangle}^{\perp}=\{I(1),I(2),I(3)\}$. This implies that $S(2)$ has not special $\langle{\bf p}(\mbox{Mod}k,\mbox{Mod}kA_2)\rangle$-precovers. So ${\langle{\bf p}(\mbox{Mod}k,\mbox{Mod}kA_2)\rangle}$ is not special precovering in general.}\end{rem}

\section{Gorenstein Projective objects and the proof of Theorem \ref{thm:special precovering}(2)}

In this section, to get the proof of Theorem \ref{thm:special precovering}(2), we first characterize when the functor ${\bf p}:\mathscr{A}\times\mathscr{B}\rightarrow(T\downarrow\mathscr{A})$ preserves Gorenstein projective objects. It should be noted that the functor ${\bf p}$ does not preserve Gorenstein projective objects by \cite[Example 1]{Z}.

Throughout this section, $\mathscr{A}$ and $\mathscr{B}$ always have enough projective objects.

\begin{df}\label{df:5.2} The right exact functor $T:\mathscr{B}\rightarrow \mathscr{A}$ is \emph{compatible}, if the following two conditions hold:
\begin{enumerate}
\item[(C1)] $T(Q^\bullet)$ is exact for any exact sequence $Q^\bullet$ of projective objects in $\mathscr{B}$.
\item[(C2)] ${\rm Hom}_{\mathscr{A}}(P^\bullet,T(Q))$ is  exact for any complete $\mathscr{A}$-projective resolution $P^\bullet$ and any projective object $Q$ in $\mathscr{B}$.
\end{enumerate}
Moreover, $T:\mathscr{B}\rightarrow \mathscr{A}$ is called weak compatible, if it satisfies conditions (W1) and (C2), where
\begin{enumerate}
\item[(W1)] $T(Q^\bullet)$ is exact for any complete $\mathscr{B}$-projective resolution $Q^\bullet$.
\end{enumerate}
\end{df}
\begin{rem}\label{rem:4.3}{\rm (1) We note that the exact functor $e:$ $\textrm{Ch}$$(R)$ $\rightarrow$ $\Mod R$ defined in Example \ref{e exact}(4) is compatible.

(2) Let $R$ and $S$ be Artin algebras, and $\Lambda=\left(
                                                                                 \begin{smallmatrix}
                                                                                   R & M \\
                                                                                   0 & S \\
                                                                                 \end{smallmatrix}
                                                                               \right)
$ a triangular matrix algebra. If we define $T\cong M\otimes_{S}-: {\rm mod}S \ra  {\rm mod}R$, it is easy to check that $T$ is compatible if and only if $M$
 is a compatible $R$-$S$-bimodule as defined by Zhang in \cite[Definition 1.1]{Z}.

 (3) It should be noted that a weak compatible functor $T$ is not compatible in general  as the following example shows.}
 \end{rem}

\begin{exa}{\rm Let $\Lambda=kQ/I$ with quiver $\xymatrix{\underset{3}{\bullet}\ar[r]^{\beta}&\ar@(ul,ur)^{x}\underset{2}{\bullet}\ar[r]^{\alpha}
&\underset{1}{\bullet}}$ and $I=\langle x^2,\alpha x, \alpha\beta, x\beta\rangle$, that is, $\Lambda=\left(\begin{smallmatrix}R & M \\0 & S\\\end{smallmatrix}\right)$ with $R=k$ and $S=\left(\begin{smallmatrix}k[x]/\langle x^2\rangle& k\\0&k\\\end{smallmatrix}\right)$. Note that the algebra $S$ is CM-free, that is every finitely generated Gorenstein projective left $S$-module is projective, and so each complete $\mathscr{B}$-projective resolution is always split, where $\mathscr{B}$ is the category of finitely generated left $S$-modules. It follows that $T=M\bigotimes_{S}-$ must be weak compatible and the $\mathscr{B}$-projective resolution $$Q^\bullet=\cdots\rightarrow\left(\begin{smallmatrix}k[x]/\langle x^2\rangle\\0\\\end{smallmatrix}\right)\stackrel{x}{\rightarrow}\left(\begin{smallmatrix}k[y]/\langle x^2\rangle\\0\\\end{smallmatrix}\right)\stackrel{x}{\rightarrow}\left(\begin{smallmatrix}k[y]/\langle x^2\rangle\\0\\\end{smallmatrix}\right)\stackrel{x}{\rightarrow}\cdots$$ is  not complete. Note that $T(Q^\bullet)=\cdots\rightarrow k\stackrel{0}{\rightarrow}k\stackrel{0}{\rightarrow}k{\rightarrow}\cdots$ is not exact, and so $T$ is not compatible.}
\end{exa}

For any ring $R$, the  projective (resp. injective) dimension of a left $R$-module $M$ will be denoted by $\textrm{pd}_RM$ (resp. $\textrm{id}_RM$) and the  flat dimension of a right $R$-module $N$ will be denoted by $\textrm{fd}N_{R}$. The following proposition gives more examples of compatible functors.

\begin{prop}\label{dim-com}Let $M$ be an $R$-$S$-bimodule and $T=M\bigotimes_S-$. Then
\begin{enumerate}
\item If {\rm fd}$M_S$ is finite, then $T$ satisfies (C1).
\item If {\rm pd}$_RM$ is finite, then $T$ satisfies (C2).
\item If $R$ is a left noetherian ring and {\rm id}$_RM$ is finite, then $T$ satisfies (C2).
\end{enumerate}\end{prop}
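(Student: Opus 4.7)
\medskip

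\noindent\textbf{Proof plan.} The three parts share a common philosophy: reduce the assertion to a uniform vanishing statement about $\mathrm{Tor}$ or $\mathrm{Ext}$ that follows from the given finite homological dimension, and then splice short exact sequences.

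For \textbf{(1)}, write $\mathrm{fd}\,M_S = n < \infty$. Given an exact sequence $Q^\bullet$ of projective left $S$-modules with syzygies $K_i = \operatorname{Im}(Q^{i}\to Q^{i-1})$, I will break $Q^\bullet$ into short exact sequences $0\to K_{i+1}\to Q^{i+1}\to K_i\to 0$. Exactness of $M\otimes_S Q^\bullet$ at the $i$-th spot is equivalent to $\mathrm{Tor}_1^S(M,K_i)=0$. A standard dimension-shifting argument along $n$ of these short exact sequences gives $\mathrm{Tor}_1^S(M,K_i)\cong \mathrm{Tor}_{n+1}^S(M,K_{i+n})=0$, which finishes this part. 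No obstacle here.

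For \textbf{(2)}, the key intermediate claim is: if $L$ is any left $R$-module with $\mathrm{pd}_R L<\infty$, then $\mathrm{Hom}_R(P^\bullet,L)$ is exact for every complete $\mathscr{A}$-projective resolution $P^\bullet$. I will prove this by induction on $d:=\mathrm{pd}_R L$. The base case $d=0$ is the definition of a complete projective resolution. For the inductive step, choose a short exact sequence $0\to L'\to P\to L\to 0$ with $P$ projective and $\mathrm{pd}_R L'=d-1$; because each $P^i$ is projective, $\mathrm{Hom}_R(P^i,-)$ sends this to a short exact sequence of abelian groups, so we obtain a short exact sequence of complexes
\[
0\to \mathrm{Hom}_R(P^\bullet,L')\to \mathrm{Hom}_R(P^\bullet,P)\to \mathrm{Hom}_R(P^\bullet,L)\to 0,
\]
and the long exact cohomology sequence together with the inductive hypothesis gives exactness of the third complex. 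To deduce (C2), note that for $Q$ projective in $\mathscr{B}$ the module $M\otimes_S Q$ is a direct summand of $M^{(\Lambda)}$ for some set $\Lambda$, whence $\mathrm{pd}_R(M\otimes_S Q)\le \mathrm{pd}_R M<\infty$; apply the intermediate claim.

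For \textbf{(3)}, I mirror the argument of (2) but replace projective dimension by injective dimension. The dual intermediate claim is: if $\mathrm{id}_R L <\infty$ then $\mathrm{Hom}_R(P^\bullet,L)$ is exact for every complete projective resolution $P^\bullet$. The base case here is genuinely different and uses that $P^\bullet$ is \emph{exact}: for $L$ injective, $\mathrm{Hom}_R(P^\bullet, L)$ is exact because $\mathrm{Hom}_R(-,L)$ is exact on the exact complex $P^\bullet$. The inductive step is the same short-exact-sequence-of-complexes trick as in (2), using a sequence $0\to L\to I\to L'\to 0$ with $I$ injective and $\mathrm{id}_R L' = \mathrm{id}_R L -1$. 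The main obstacle is then upgrading from $M$ to $M\otimes_S Q$: for $Q$ projective, $M\otimes_S Q$ is a summand of $M^{(\Lambda)}$, so it suffices to bound $\mathrm{id}_R M^{(\Lambda)}$. This is precisely where the left noetherian hypothesis on $R$ is used: arbitrary direct sums of injective left $R$-modules are injective, so an injective resolution of $M$ of length $\mathrm{id}_R M$ yields one of $M^{(\Lambda)}$ of the same length; hence $\mathrm{id}_R(M\otimes_S Q)\le \mathrm{id}_R M<\infty$, and the intermediate claim applies.
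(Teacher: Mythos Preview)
Your proof is correct and follows essentially the same strategy as the paper: part (1) is identical (dimension-shift $\mathrm{Tor}_1^S(M,K_i)\cong \mathrm{Tor}_{n+1}^S(M,K_{i+n})=0$), and for (2)--(3) both you and the paper first bound the projective (resp.\ injective) dimension of $M\otimes_S Q$ by observing it is a summand of $M^{(\Lambda)}$ and invoking left noetherianity for the injective case. The only organizational difference is in how the $\mathrm{Ext}$-vanishing is extracted: the paper shifts in the \emph{first} variable along the syzygies of the complete projective resolution, obtaining $\mathrm{Ext}^1_R(\ker d^i, M\otimes_S Q)\cong \mathrm{Ext}^{n+1}_R(\ker d^{n+i}, M\otimes_S Q)=0$ in one stroke for both (2) and (3), whereas you induct on the homological dimension of the \emph{target} via short exact sequences of complexes, which requires distinguishing the two base cases (projective target uses the ``complete'' part of $P^\bullet$; injective target uses only the exactness of $P^\bullet$). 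Both routes are standard and equally valid; the paper's is slightly more compact, while yours makes the role of each hypothesis on $P^\bullet$ a bit more transparent.
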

\pf (1) Let $Q^\bullet=\cdots\rightarrow Q^{-1}\stackrel{d^{-1}}\rightarrow Q^0\stackrel{d^0}\rightarrow Q^1\rightarrow\cdots$ be an exact sequence of projective left $S$-modules and fd$M_S=n$. By Dimension Shifting, we have ${\rm Tor}_S^1(M, {\ker} d^i)={\rm Tor}_S^{n+1}(M, {\ker} d^{n+i})=0$ for any integer $i$. It follows that $T(Q^\bullet)$ is still exact.

(2) and (3). For any projective left $S$-module $Q$, there is an index $I$ such that $Q$ is a direct summand of $S^{(I)}$. Then $M\bigotimes_{S} Q$ is a direct summand of $M\bigotimes_{S} S^{(I)}\cong {M^{(I)}}$, and hence pd$_RM\bigotimes_{S} Q$ is finite if pd$_RM$ is finite and  id$_RM\bigotimes_{S} Q$ is finite if id$_RM$ is finite and $R$ is left noetherian.  Let $P^\bullet=\cdots\rightarrow P^{-1}\stackrel{d^{-1}}\rightarrow P^0\stackrel{d^0}\rightarrow P^1\rightarrow\cdots$ be a complete $\mathscr{A}$-projective resolution. Then (2) and (3) hold  by the isomorphism  ${\rm Ext}_{R}^1({\ker} d^i, M\bigotimes_{S} Q)\cong{\rm Ext}_{R}^{n+1}({\ker} d^{n+i}, M\bigotimes_{S} Q)$ for any integer $i$.
\epf

\begin{lem}\label{res GP} Let  $G$ be an object in $\mathscr{B}$ and $L$ an object in $\mathscr{A}$.
\begin{enumerate}
\item If ${\bf p}(0,G)$ is a Gorenstein projective object, then $G$ is Gorenstein projective.
\item If ${\bf p}(L,0)$ is a Gorenstein projective object, then $L$ is Gorenstein projective.
\end{enumerate}
\end{lem}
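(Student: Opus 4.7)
The overall strategy is to take a complete projective resolution $P^{\bullet}$ in $(T\downarrow\mathscr{A})$ of the given Gorenstein projective object, apply a natural forgetful functor, and verify the resulting complex is a complete projective resolution in $\mathscr{A}$ or $\mathscr{B}$. By the analogue of Lemma \ref{lem:2.2} describing projectives in $(T\downarrow\mathscr{A})$, I may take each $P^{i}=\mathbf{p}(A^{i},B^{i})$ with $A^{i}$ projective in $\mathscr{A}$ and $B^{i}$ projective in $\mathscr{B}$. Writing $B^{\bullet}=U_{\mathscr{B}}(P^{\bullet})$ and $A^{\bullet}$ for the complex of cokernels of the structure maps, one obtains a canonical degree-wise split short exact sequence of complexes
$$0 \longrightarrow \mathbf{p}(0, B^{\bullet}) \longrightarrow P^{\bullet} \longrightarrow \mathbf{p}(A^{\bullet}, 0) \longrightarrow 0$$
in $(T\downarrow\mathscr{A})$. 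Under condition (C1) of Definition \ref{df:5.2}, $T(B^{\bullet})$ is exact, so the long exact cohomology sequence forces $A^{\bullet}$ to be an exact complex of projectives in $\mathscr{A}$; moreover the adjunction $\mathbf{p}\dashv\mathbf{q}$ gives the identification $\operatorname{Hom}_{(T\downarrow\mathscr{A})}(P^{\bullet}, \mathbf{p}(P, 0)) \cong \operatorname{Hom}_{\mathscr{A}}(A^{\bullet}, P)$ for any projective $P\in\mathscr{A}$, so that $A^{\bullet}$ is a complete $\mathscr{A}$-projective resolution.

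For part (2), I would apply $U_{\mathscr{A}}$ to obtain the SES $0 \to T(B^{\bullet}) \to U_{\mathscr{A}}(P^{\bullet}) \to A^{\bullet} \to 0$ of complexes in $\mathscr{A}$. Since $Z^{0}(B^{\bullet}) = U_{\mathscr{B}}(\mathbf{p}(L,0)) = 0$, exactness of $B^{\bullet}$ forces $d_{B}^{-1}=0$, and hence $Z^{0}(T(B^{\bullet}))=\operatorname{im}(Td_{B}^{-1})=0$. Passing to cycles in the above SES (permissible because $T(B^{\bullet})$ is exact) then yields $L=Z^{0}(U_{\mathscr{A}}(P^{\bullet}))\cong Z^{0}(A^{\bullet})$, which is Gorenstein projective.

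For part (1), I would apply $U_{\mathscr{B}}$ to get an exact complex $B^{\bullet}$ of projectives in $\mathscr{B}$ with $Z^{0}(B^{\bullet})=G$. The right adjoint $R$ of $U_{\mathscr{B}}$, given by $R(Q)=\left(\begin{smallmatrix}0\\Q\end{smallmatrix}\right)$, produces $\operatorname{Hom}_{\mathscr{B}}(B^{\bullet},Q)\cong\operatorname{Hom}_{(T\downarrow\mathscr{A})}(P^{\bullet},R(Q))$. The canonical short exact sequence $0\to\mathbf{p}(T(Q),0)\to\mathbf{p}(0,Q)\to R(Q)\to 0$ in $(T\downarrow\mathscr{A})$, combined with the exactness of $\operatorname{Hom}(P^{\bullet},\mathbf{p}(0,Q))$ (since $\mathbf{p}(0,Q)$ is projective), reduces the total acyclicity of $B^{\bullet}$ to the exactness of $\operatorname{Hom}_{(T\downarrow\mathscr{A})}(P^{\bullet},\mathbf{p}(T(Q),0))\cong\operatorname{Hom}_{\mathscr{A}}(A^{\bullet},T(Q))$. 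This is precisely condition (C2) of Definition \ref{df:5.2} applied to the complete projective resolution $A^{\bullet}$ supplied in the first paragraph, so $G$ is Gorenstein projective.

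The chief obstacle is that neither the exactness of $T(B^{\bullet})$ nor that of $\operatorname{Hom}_{\mathscr{A}}(A^{\bullet},T(Q))$ holds automatically for an arbitrary right-exact $T$; both properties are exactly captured by conditions (C1) and (C2), i.e.\ by compatibility of $T$ in the sense of Definition \ref{df:5.2}. The argument therefore hinges on packaging the two types of exactness needed (one to make the cokernel complex a genuine complete resolution, the other to kill the relevant Tate-cohomology obstruction) into a single compatibility hypothesis, after which the forgetful/cokernel reductions above close cleanly.
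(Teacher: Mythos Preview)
Your argument does not prove the lemma as stated: Lemma~\ref{res GP} carries \emph{no} hypothesis on $T$ beyond right exactness, yet you explicitly invoke conditions (C1) and (C2) of Definition~\ref{df:5.2} and even conclude that ``the argument therefore hinges on \ldots a single compatibility hypothesis.'' That is precisely what must be avoided here. The lemma is meant to hold unconditionally, and indeed it is paired with Lemma~\ref{T(G)} --- which characterises (W1) and (C2) --- so assuming (C1)/(C2) in the proof of Lemma~\ref{res GP} would make the logical structure of Proposition~\ref{GP} circular.

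The paper's route bypasses the need for compatibility by testing against objects that are already projective in $(T\downarrow\mathscr{A})$. For part~(1) one applies $\operatorname{Hom}_{(T\downarrow\mathscr{A})}(-,\mathbf{p}(0,Q))$ to the complete resolution; since $\mathbf{p}(0,Q)$ is projective, this complex is exact, and the adjunction $\mathbf{p}\dashv\mathbf{q}$ identifies it (termwise) with $\operatorname{Hom}_{\mathscr{A}}(P^{i},T(Q))\times\operatorname{Hom}_{\mathscr{B}}(Q^{i},Q)$, from which exactness of $\operatorname{Hom}_{\mathscr{B}}(Q^{\bullet},Q)$ is read off. You instead route through the non-projective object $R(Q)=\left(\begin{smallmatrix}0\\Q\end{smallmatrix}\right)$ and are then forced to control $\operatorname{Hom}_{\mathscr{A}}(A^{\bullet},T(Q))$, which is exactly why (C2) enters your argument. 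For part~(2) the paper uses $\operatorname{Hom}_{(T\downarrow\mathscr{A})}(\mathbf{p}(P^{i},Q^{i}),\mathbf{p}(P,0))\cong\operatorname{Hom}_{\mathscr{A}}(P^{i},P)$ directly --- again $\mathbf{p}(P,0)$ is projective --- so no information about $T(B^{\bullet})$ is needed and (C1) never enters. Your detour through the short exact sequence $0\to T(B^{\bullet})\to U_{\mathscr{A}}(P^{\bullet})\to A^{\bullet}\to 0$ forces you to assume (C1); moreover, your ``passing to cycles'' step to conclude $L\cong Z^{0}(A^{\bullet})$ is not justified: left exactness of $Z^{0}$ only gives an injection $L\hookrightarrow Z^{0}(A^{\bullet})$, and the cokernel of that map lands in $T(B^{0})/\operatorname{im}T(d_{B}^{-1})=T(B^{0})$, which need not vanish.

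In short, the missing idea is to test the complete resolution against $\mathbf{p}(0,Q)$ and $\mathbf{p}(P,0)$, which are genuine projectives of $(T\downarrow\mathscr{A})$, rather than against $R(Q)$ or against $A^{\bullet}$ via $U_{\mathscr{A}}$. With that choice the adjunction $\mathbf{p}\dashv\mathbf{q}$ delivers the required $\operatorname{Hom}$-exactness in $\mathscr{B}$ and $\mathscr{A}$ without any compatibility assumption.
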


\pf (1) Let ${\bf p}(0,G)$ be a Gorenstein projective object. Then there is a complete $(T\downarrow\mathscr{A})$-projective resolution $${{\bf p}(P^\bullet,Q^\bullet)}=\cdots\rightarrow{\bf p}(P^{-1},Q^{-1}){\rightarrow} {\bf p}(P^{0},Q^{0})\rightarrow
{\bf p}(P^{1},Q^{1})\rightarrow\cdots$$ with $Z^0({{\bf p}(P^\bullet,Q^\bullet)})={\bf p}(0,G)$.  Then  ${{\bf p}(P^\bullet,Q^\bullet)}$ is ${\rm Hom}_{(T\downarrow\mathscr{A})}(-,{\bf p}(0,Q))$ exact for any projective object $Q$ in $\mathscr{B}$, which implies that $Q^\bullet_\mathscr{B}=\cdots\rightarrow Q^{-1}\rightarrow Q^0\rightarrow Q^1\rightarrow\cdots$ is ${\rm Hom}_{\mathscr{B}}(-,Q)$ exact since  $${\rm Hom}_{(T\downarrow\mathscr{A})}({\bf p}(P^{i},Q^{i}),{\bf p}(0,Q))\simeq{\rm Hom}_{\mathscr{A}\times\mathscr{B}}((P^{i},Q^{i}),{\bf q}({\bf p}(0,Q))){\simeq {\rm Hom}_\mathscr{A}(P^{i},T(Q))\times} {\rm Hom}_\mathscr{B}(Q^{i},Q)$$ by Remark \ref{functors}. Clearly, $G=Z^0({Q}^\bullet)$, then $G$ is Gorenstein projective.

(2) Let ${\bf p}(L,0)$ be a Gorenstein projective object. Then there is a complete $(T\downarrow\mathscr{A})$-projective resolution $${{\bf p}(P^\bullet,Q^\bullet)}=\cdots\rightarrow{\bf p}(P^{-1},Q^{-1}){\rightarrow} {\bf p}(P^{0},Q^{0})\rightarrow
{\bf p}(P^{1},Q^{1})\rightarrow\cdots$$ with $Z^0({{\bf p}(P^\bullet,Q^\bullet)})={\bf p}(L,0)$.  Then ${{\bf p}(P^\bullet,Q^\bullet)}$ is ${\rm Hom}_{(T\downarrow\mathscr{A})}(-,{{\bf p}(P,0)})$ exact for any projective object $P$ in $\mathscr{A}$, which implies that $P^\bullet=\cdots\rightarrow P^{-1}\rightarrow P^0\rightarrow P^1\rightarrow\cdots$ is ${\rm Hom}_{\mathscr{A}}(-,P)$ exact since  $${\rm Hom}_{(T\downarrow\mathscr{A})}({\bf p}(P^{i},Q^{i}),{\bf p}(P,0))\simeq{\rm Hom}_{\mathscr{A}\times\mathscr{B}}((P^{i},Q^{i}),{\bf q}({\bf p}(P,0)))={\rm Hom}_{\mathscr{A}}(P^{i},P)$$ by Remark \ref{functors}. Clearly, $L=Z^0(P^\bullet)$, then $L$ is Gorenstein projective.
\epf

\begin{lem}\label{T(G)} For the comma category  $(T\downarrow\mathscr{A})$, we have
\begin{enumerate}
\item $T$ satisfies (W1) if and only if ${\bf p}(0,G)$ is a Gorenstein projective object for any Gorenstein projective object $G$ in $\mathscr{B}$.
\item $T$ satisfies (C2) if and only if ${\bf p}(L,0)$ is a Gorenstein projective object for any Gorenstein projective object $L$ in $\mathscr{A}$.
\end{enumerate}
\end{lem}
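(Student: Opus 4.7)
My plan is to handle both biconditionals by a uniform template: in each part the $(\Rightarrow)$ direction promotes a complete projective resolution downstairs to a candidate complete projective resolution in $(T\downarrow\mathscr{A})$ via ${\bf p}(0,-)$ (for part (1)) or ${\bf p}(-,0)$ (for part (2)), while the $(\Leftarrow)$ direction extracts the relevant information from a given complete resolution of ${\bf p}(0,G)$ or ${\bf p}(L,0)$ upstairs.

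For $(\Rightarrow)$ of (2), I would verify that if $P^\bullet$ is a complete $\mathscr{A}$-projective resolution of $L$, then $\left(\begin{smallmatrix}P^\bullet\\0\end{smallmatrix}\right)={\bf p}(P^\bullet,0)$ is a complete $(T\downarrow\mathscr{A})$-projective resolution of ${\bf p}(L,0)$: each term is projective by Remark \ref{functors}(1), exactness is preserved because $\left(\begin{smallmatrix}-\\0\end{smallmatrix}\right)$ is an exact functor, and Hom-exactness against any projective reduces, via the $\text{Hom}$ formulas computed in the proof of Lemma \ref{res GP}, to the exactness of $\text{Hom}_\mathscr{A}(P^\bullet,P)$ for projective $P\in\mathscr{A}$ (automatic) and of $\text{Hom}_\mathscr{A}(P^\bullet,T(Q))$ for projective $Q\in\mathscr{B}$, which is precisely (C2). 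The $(\Rightarrow)$ of (1) is wholly analogous using ${\bf p}(0,Q^\bullet)$; the role of (C2) is replaced by (W1), which directly provides exactness of the $\mathscr{A}$-component $T(Q^\bullet)$, and hence both exactness of the whole complex and the identification of its $0$-cycle with ${\bf p}(0,G)$.

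For $(\Leftarrow)$ of (2) I would invoke the standard observation that, for any Gorenstein projective object $M$, every exact complex of projectives with $Z^0=M$ is automatically a complete projective resolution---a short dimension-shift consequence of the vanishing $\text{Ext}^{>0}(\text{Gorenstein projective},\text{projective})=0$. Applied to the exact complex ${\bf p}(P^\bullet,0)$, whose $0$-cycle is the Gorenstein projective object ${\bf p}(L,0)$, this promotes it to a complete resolution; testing Hom-exactness against ${\bf p}(0,Q)$ then reads off exactness of $\text{Hom}_\mathscr{A}(P^\bullet,T(Q))$, i.e.\ (C2).

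The real work lies in $(\Leftarrow)$ of (1), because ${\bf p}(0,-)$ is only right exact and ${\bf p}(0,Q^\bullet)$ need not be exact a priori. My plan is to set $G=Z^0(Q^\bullet)$ and, using the hypothesis that ${\bf p}(0,G)$ is Gorenstein projective, fix a complete $(T\downarrow\mathscr{A})$-projective resolution $\tilde P^\bullet$ with $\tilde P^i={\bf p}(A^i,B^i)$ (after enlarging by suitable complementary summands). Its $\mathscr{B}$-component $B^\bullet$ is then an exact complex of projectives with $Z^0=G$, hence a complete projective resolution of $G$, homotopy equivalent to $Q^\bullet$. The crucial structural point is that the $\mathscr{A}$-part of the structure map of ${\bf p}(A^i,B^i)$ is the direct-summand inclusion $T(B^i)\hookrightarrow A^i\oplus T(B^i)$, so the $\mathscr{A}$-component of every cycle inclusion $Z^i(\tilde P^\bullet)\hookrightarrow\tilde P^i$ factors through this summand, and its injectivity translates precisely into injectivity of the induced map $T(K^i_B)\to T(B^i)$, where $K^i_B=Z^i(B^\bullet)$; the base case $i=0$ is automatic because the structure map of $Z^0={\bf p}(0,G)$ is the identity on $T(G)$. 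Running this inductively at cosyzygies for $i>0$, and for $i<0$ applying the hypothesis in turn to the Gorenstein projective objects ${\bf p}(0,K^i_B)$, gives injectivity of $T(K^i_B)\to T(B^i)$ for every $i$. Combined with the right exactness of $T$, this upgrades each splicing $0\to K^i_B\to B^i\to K^{i+1}_B\to 0$ to a $T$-exact short exact sequence, so $T(B^\bullet)$ is exact; since $T$ preserves homotopy equivalences and $B^\bullet\simeq Q^\bullet$, we conclude that $T(Q^\bullet)$ is exact, i.e.\ (W1). This cycle-embedding/compatibility analysis is the main obstacle; the remaining three directions reduce to the uniform template above plus routine Hom computations.
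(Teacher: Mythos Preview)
Your arguments for (1)$\Rightarrow$ and both directions of (2) are essentially the paper's, with one caveat: the ``standard observation'' you cite for (2)$\Leftarrow$ is true, but it is \emph{not} a pure dimension-shift consequence of ${\rm Ext}^{>0}(\text{Gorenstein projective},\text{projective})=0$; dimension shifting along $0\to Z^i\to C^i\to Z^{i+1}\to 0$ only controls ${\rm Ext}^{\ge 2}$ of the cosyzygies, and a co-Schanuel or homotopy-equivalence step is still needed for ${\rm Ext}^1$. A cleaner route, closer to the paper's implicit reasoning, is to apply the hypothesis to \emph{every} cycle $Z^i(P^\bullet)$ (each Gorenstein projective since $P^\bullet$ is complete), so that each ${\bf p}(Z^i(P^\bullet),0)$ is Gorenstein projective and hence ${\rm Ext}^1$-orthogonal to projectives in $(T\downarrow\mathscr{A})$; this makes ${\bf p}(P^\bullet,0)$ totally acyclic directly.

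The genuine gap is in (1)$\Leftarrow$. A morphism ${\bf p}(A^i,B^i)\to{\bf p}(A^{i+1},B^{i+1})$ in $(T\downarrow\mathscr{A})$ has $\mathscr{A}$-component of the form $\left(\begin{smallmatrix}d&0\\e&T(\delta)\end{smallmatrix}\right)$ with an off-diagonal $e:A^i\to T(B^{i+1})$ that is generally nonzero; the differentials of $\tilde P^\bullet$ are not ${\bf p}$-morphisms. Your base case and first inductive step go through---one computes $Z^1\cong{\bf p}(A^0,K^1_B)$ with split-monic structure map---but $Z^2$ need not lie in $\langle{\bf p}(\mathscr{A},\mathscr{B})\rangle$: its structure map $\psi^2$ is injective precisely when $dx=0$ forces $ex\in{\rm im}\,T(b^1)$, and nothing guarantees that. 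Since injectivity of $\psi^i$ is \emph{equivalent} to injectivity of $T(K^i_B\to B^i)$, your induction is circular past $i=1$; and your sketch for $i<0$ (``apply the hypothesis to ${\bf p}(0,K^i_B)$'') does not say how Gorenstein projectivity of ${\bf p}(0,K^i_B)$ yields injectivity of $T(K^i_B\to B^i)$ for the \emph{given} embedding. The paper avoids all of this by working with $Q^\bullet$ itself: each cycle $G^i:=Z^i(Q^\bullet)$ is Gorenstein projective, so by hypothesis ${\bf p}(0,G^i)$ embeds in a projective $\left(\begin{smallmatrix}Q_A\\Q_B\end{smallmatrix}\right)$; a comparison map from $0\to G^i\to Q^i\to G^{i+1}\to 0$ to $0\to G^i\to Q_B\to G_B\to 0$ exists because ${\rm Ext}^1(G^{i+1},Q_B)=0$, and after applying $T$ one reads off that $T(G^i\to Q^i)$ is monic since it factors the injection $T(G^i)\hookrightarrow Q_A$. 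No induction on $\tilde P^\bullet$, no homotopy transfer back to $Q^\bullet$.
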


\pf (1) $``\Leftarrow"$. Let $Q^\bullet=\cdots\rightarrow Q^{-1}\rightarrow Q^0\stackrel{d^0}\rightarrow Q^1\rightarrow\cdots$ be a complete $\mathscr{B}$-projective resolution. Then it is sufficient to show that $T$ is exact corresponding to the exact sequence $0\rightarrow G^0\stackrel{i}{\rightarrow} Q^0\rightarrow G^1\rightarrow0$ where $G^0={\ker}d^0$ and $G^1={\rm im}d^0$. Since $G^0$ is Gorenstein projective, ${\bf p}(0,G^0)$ is Gorenstein projective by hypothesis. Thus there is an exact sequence $0\rightarrow{\bf p}(0,G^0){\rightarrow} \left(\begin{smallmatrix}Q_A\\Q_B\end{smallmatrix}\right)\rightarrow\left(\begin{smallmatrix}G_A\\G_B\end{smallmatrix}\right)\rightarrow0$ in $(T\downarrow\mathscr{A})$ with $\left(\begin{smallmatrix}Q_A\\Q_B\end{smallmatrix}\right)$ projective. Since $Q_B$ is projective, we have the following commutative diagram with exact rows:
$$\xymatrix{
0\ar[r]&G^0\ar[r]\ar@{=}[d]&Q^0\ar[r]\ar[d]&G^1\ar[r]\ar[d]&0\\
    0\ar[r]&G^0\ar[r]&Q_B\ar[r]&G_B\ar[r]&0.\\
    }$$
 Therefore, we obtain the following exact commutative diagram
$$\xymatrix{
&T(G^0)\ar^{T(i)}[r]\ar@{=}[d]&T(Q^0)\ar[r]\ar[d]&T(G^1)\ar[r]\ar[d]&0\\
&T(G^0)\ar[r]\ar@{=}[d]&T(Q_B)\ar[r]\ar[d]&T(G_B)\ar[r]\ar[d]&0\\
0\ar[r]&T(G^0)\ar[r]&Q_A\ar[r]&G_A\ar[r]&0,\\
    }$$
which implies that $T(i)$ is monic, as required.

$``\Rightarrow"$. Let $G$ be a Gorenstein projective object in $\mathscr{B}$. Then there is a complete $\mathscr{B}$-projective resolution $Q^\bullet=\cdots\rightarrow Q^{-1}\rightarrow Q^0\stackrel{d^0}\rightarrow Q^1\rightarrow\cdots$ such that $G={\ker} d^0$. By hypothesis,
$${\bf p}(0,Q^\bullet)=\cdots\longrightarrow{\bf p}(0,Q^{-1}){\longrightarrow} {\bf p}(0,Q^{0})\stackrel{{\bf p}(0,d^{0})}\longrightarrow{\bf p}(0,Q^{1})\longrightarrow\cdots$$ is a projective resolution. It follows that $${\rm Hom}_{(T\downarrow\mathscr{A})}({\bf p}(0,Q^{i}),{\bf p}(P,Q))\simeq{\rm Hom}_{\mathscr{A}\times\mathscr{B}}((0,Q^{i}),{\bf q}({\bf p}(P,Q)))={\rm Hom}_{\mathscr{B}}(Q^{i},Q)$$ for any projective object ${\bf p}(P,Q)$ in $(T\downarrow\mathscr{A})$. Thus, ${\bf p}(0,Q^\bullet)$ is complete. Then  ${\bf p}(0,G)$ is a Gorenstein projective object by noting that ${\bf p}(0,G)={\ker} {\bf p}(0,d^{0})$.

(2) Let $P^\bullet=\cdots\rightarrow P^{-1}\stackrel{d^{-1}}\rightarrow P^0\stackrel{d^0}\rightarrow P^1\stackrel{d^1}\rightarrow\cdots$ be a complete $\mathscr{A}$-projective resolution. Then there exists an exact sequence $${\bf p}(P^\bullet,0)=\cdots\longrightarrow{\bf p}(P^{-1},0)\stackrel{{\bf p}(d^{-1},0)}{\longrightarrow} {\bf p}(P^{0},0)\stackrel{{\bf p}(d^{0},0)}\longrightarrow
{\bf p}(P^{1},0)\stackrel{{\bf p}(d^{1},0)}\longrightarrow\cdots,$$
in $(T\downarrow\mathscr{A})$ with each term projective.  For any projective object ${\bf p}(P,Q)$ in $(T\downarrow\mathscr{A})$,  one has $${\rm Hom}_{(T\downarrow\mathscr{A})}({\bf p}(P^\bullet,0),{\bf p}(P,Q))\simeq{\rm Hom}_{\mathscr{A}\times\mathscr{B}}((P^\bullet,0),{\bf q}({\bf p}(P,Q)))={\rm Hom}_{\mathscr{A}}(P^{\bullet},P\oplus T(Q)).$$
Consequently, we get that ${\rm Hom}_{\mathscr{A}}(P^{\bullet}, T(Q))$ is exact for any projective object $Q$ in $\mathscr{B}$ if and only if ${\bf p}(P^\bullet,0)$ is a complete $(T\downarrow\mathscr{A})$-resolution for any complete $\mathscr{A}$-resolution $P^\bullet$, and so (2) holds.
\epf

The following proposition is crucial to the proof of Theorem \ref{thm:special precovering}(2) which characterizes when the functor ${\bf p}$ preserves Gorenstein projective objects.

\begin{prop}\label{GP} Let $(T\downarrow\mathscr{A})$ be a comma category. Then $\langle{\bf p}(\mathcal {GP}_\mathscr{A},\mathcal {GP}_\mathscr{B})\rangle\subseteq\mathcal {GP}_{(T\downarrow\mathscr{A})}$ if and only if $T$ is weak compatible. Moreover, if $T$ is compatible, then
$\mathcal {GP}_{(T\downarrow\mathscr{A})}=\langle{\bf p}(\mathcal {GP}_\mathscr{A},\mathcal {GP}_\mathscr{B})\rangle$.
\end{prop}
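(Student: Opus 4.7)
The plan is to deduce the equivalence from Lemma \ref{T(G)} and to establish the moreover statement by decomposing a complete projective resolution of an arbitrary Gorenstein projective object in $(T\downarrow\mathscr{A})$ into its $\mathscr{A}$- and $\mathscr{B}$-components. Since every projective in $(T\downarrow\mathscr{A})$ is (a direct summand of) some ${\bf p}(P_A, P_B) = {\bf p}(P_A, 0) \oplus {\bf p}(0, P_B)$ with $P_A \in \mathscr{A}$ and $P_B \in \mathscr{B}$ projective, the two parts of Lemma \ref{T(G)} combine to give: $T$ is weak compatible if and only if ${\bf p}(L, G) \in \mathcal{GP}_{(T\downarrow\mathscr{A})}$ for every $L \in \mathcal{GP}_\mathscr{A}$ and $G \in \mathcal{GP}_\mathscr{B}$. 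As $\mathcal{GP}_{(T\downarrow\mathscr{A})}$ is closed under extensions, $\langle{\bf p}(\mathcal{GP}_\mathscr{A},\mathcal{GP}_\mathscr{B})\rangle \subseteq \mathcal{GP}_{(T\downarrow\mathscr{A})}$ holds whenever $T$ is weak compatible; conversely, this inclusion forces ${\bf p}(L, 0)$ and ${\bf p}(0, G)$ to be Gorenstein projective, and Lemma \ref{T(G)} recovers (W1) and (C2).

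Now suppose $T$ is compatible. The $\supseteq$ inclusion is covered above, so consider the reverse. Fix $\left(\begin{smallmatrix}A\\B\end{smallmatrix}\right)_\varphi \in \mathcal{GP}_{(T\downarrow\mathscr{A})}$ and choose a complete projective resolution $\mathbf{P}^\bullet$ of the form ${\bf p}(P_A^\bullet, P_B^\bullet)$ (possible since ${\bf p}$-objects exhaust projectives up to summands). The compatibility condition in the comma category forces each $\mathscr{A}$-component differential of $\mathbf{P}^\bullet$ to take the lower-triangular block form $\left(\begin{smallmatrix}\alpha_{11}^i & 0 \\ \alpha_{21}^i & T(\beta^i)\end{smallmatrix}\right)$, where $\beta^i$ is the corresponding $\mathscr{B}$-component differential. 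Exactness of ${\bf q}$ gives $P_B^\bullet$ exact with $Z^0 = B$, and (C1) upgrades this to $T(P_B^\bullet)$ exact with $Z^0 = T(B)$. Viewing the $\mathscr{A}$-component of $\mathbf{P}^\bullet$ as the middle of a short exact sequence of complexes $0 \to T(P_B^\bullet) \to (\text{middle}) \to P_A^\bullet \to 0$ (read off from the block form) and invoking the long exact cohomology sequence, one finds $P_A^\bullet$ exact and obtains, at the level of $Z^0$, the sequence $0 \to T(B) \to A \to Z^0(P_A^\bullet) \to 0$, which shows $\varphi$ is monic with $\coker\varphi = Z^0(P_A^\bullet)$.

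It remains to see that $P_A^\bullet$ and $P_B^\bullet$ are complete projective resolutions. Testing $\mathbf{P}^\bullet$ against ${\bf p}(P, 0)$ for $P \in \mathscr{A}$ projective yields the canonical identification ${\rm Hom}_{(T\downarrow\mathscr{A})}(\mathbf{P}^\bullet, {\bf p}(P, 0)) \cong {\rm Hom}_\mathscr{A}(P_A^\bullet, P)$ of complexes, so completeness of $\mathbf{P}^\bullet$ forces completeness of $P_A^\bullet$, whence $\coker\varphi \in \mathcal{GP}_\mathscr{A}$. For $P_B^\bullet$, apply ${\rm Hom}_{(T\downarrow\mathscr{A})}(\mathbf{P}^\bullet, -)$ to the short exact sequence $0 \to {\bf p}(T(Q), 0) \to {\bf p}(0, Q) \to \left(\begin{smallmatrix}0\\Q\end{smallmatrix}\right)_0 \to 0$ for $Q \in \mathscr{B}$ projective; projectivity of each $\mathbf{P}^i$ yields a short exact sequence of complexes with outer terms ${\rm Hom}_\mathscr{A}(P_A^\bullet, T(Q))$ and ${\rm Hom}_\mathscr{B}(P_B^\bullet, Q)$. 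The middle is exact by completeness of $\mathbf{P}^\bullet$, and the left is exact by (C2) applied to the already-established complete resolution $P_A^\bullet$, so the long exact cohomology sequence produces ${\rm Hom}_\mathscr{B}(P_B^\bullet, Q)$ exact, placing $B$ in $\mathcal{GP}_\mathscr{B}$. Finally, the short exact sequence $0 \to {\bf p}(0, B) \to \left(\begin{smallmatrix}A\\B\end{smallmatrix}\right)_\varphi \to {\bf p}(\coker\varphi, 0) \to 0$ exhibits $\left(\begin{smallmatrix}A\\B\end{smallmatrix}\right)_\varphi$ as a member of $\langle{\bf p}(\mathcal{GP}_\mathscr{A}, \mathcal{GP}_\mathscr{B})\rangle$.

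The main obstacle is that the differentials in $\mathbf{P}^\bullet$ need not lie in the image of ${\bf p}$: the lower-left block $\alpha_{21}^i$ mixes the $\mathscr{A}$- and $\mathscr{B}$-Hom contributions so that ${\rm Hom}(\mathbf{P}^\bullet, {\bf p}(0, Q))$ does not split as a direct sum of complexes. Recovering the exactness of ${\rm Hom}_\mathscr{B}(P_B^\bullet, Q)$ must therefore be routed through the auxiliary sequence involving $\left(\begin{smallmatrix}0\\Q\end{smallmatrix}\right)_0$, and the two compatibility conditions do complementary work there: (C1) secures the $\mathscr{A}$-side complete resolution $P_A^\bullet$ together with the exactness of $T(P_B^\bullet)$, while (C2) annihilates the ${\rm Hom}_\mathscr{A}(P_A^\bullet, T(Q))$ contribution arising from the connecting homomorphism.
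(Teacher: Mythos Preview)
Your proof is correct and follows the same overall strategy as the paper: the equivalence is deduced from Lemma~\ref{T(G)} together with closure of $\mathcal{GP}$ under extensions and direct sums, and the reverse inclusion (under compatibility) is obtained by decomposing a complete projective resolution of an arbitrary Gorenstein projective object into its $\mathscr{A}$- and $\mathscr{B}$-components.

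The one substantive difference is in how you extract the completeness of the $\mathscr{B}$-component $P_B^\bullet$. The paper simply applies ${\rm Hom}(-,{\bf p}(0,Q))$ and appeals to the termwise adjunction isomorphism ${\rm Hom}_{(T\downarrow\mathscr{A})}({\bf p}(P^i,Q^i),{\bf p}(0,Q))\cong {\rm Hom}_\mathscr{A}(P^i,T(Q))\times{\rm Hom}_\mathscr{B}(Q^i,Q)$, referring to the proof of Lemma~\ref{res GP}(1). You correctly observe that this isomorphism is only degreewise: because the differentials carry the off-diagonal block $\alpha_{21}^i$, the Hom-complex is a nontrivial extension of $\Hom_\mathscr{B}(P_B^\bullet,Q)$ by $\Hom_\mathscr{A}(P_A^\bullet,T(Q))$ rather than a direct sum. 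Your route through the short exact sequence $0\to{\bf p}(T(Q),0)\to{\bf p}(0,Q)\to\left(\begin{smallmatrix}0\\Q\end{smallmatrix}\right)\to 0$, together with the already-established completeness of $P_A^\bullet$ and condition (C2), closes this gap cleanly. This is a genuine clarification of the paper's argument at that step; the rest of your proof coincides with the paper's.
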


\begin{proof} By Lemmas \ref{res GP} and \ref{T(G)}, it is sufficient to show $\mathcal {GP}_{(T\downarrow\mathscr{A})}\subseteq\langle{\bf p}(\mathcal {GP}_\mathscr{A},\mathcal {GP}_\mathscr{B})\rangle$ provided that $T$ is compatible.
For any $\left(\begin{smallmatrix}H\\G\end{smallmatrix}\right)_\varphi\in\mathcal {GP}_{(T\downarrow\mathscr{A})}$,  there is a complete $(T\downarrow\mathscr{A})$-projective resolution $${\bf p}(P^\bullet,Q^\bullet)=\cdots\longrightarrow{\bf p}(P^{-1},Q^{-1})\stackrel{{\bf p}(d^{-1},\delta^{-1})}{\longrightarrow} {\bf p}(P^{0},Q^0)\stackrel{{\bf p}(d^{0},\delta^0)}\longrightarrow
{\bf p}(P^{1},Q^1)\stackrel{{\bf p}(d^{1},\delta^1)}\longrightarrow\cdots$$ with $\left(\begin{smallmatrix}H\\G\end{smallmatrix}\right)_\varphi\in {\ker}{\bf p}(d^{0},\delta^0)$. Then $Q^\bullet$ is a $\mathscr{B}$-projective resolution, and so $T(Q^\bullet)$ is exact since $T$ is compatible. Thus we have the following exact commutative diagram:
$$\xymatrix{
0\ar[r]&T(G^i)\ar[r]\ar[d]^{\varphi^i}&T({ Q^i})\ar[r]\ar[d]^{{\left(\begin{smallmatrix}{0}\\  {1} \end{smallmatrix}\right)}}&T(G^{i+1})\ar[d]^{^{\varphi^{i+1}}}\ar[r]&0\\
    0\ar[r]&H^i\ar[r]&{P^{i}\oplus T(Q^{i})}\ar[r]&H^{i+1}\ar[r]&0,\\
    }$$ where $G^i={\ker} {\delta^i}, {\left(\begin{smallmatrix}{H^i}\\  {G^i} \end{smallmatrix}\right)_{\varphi^i}}\in {\ker}{\bf p}(d^{i},\delta^{i})$ and $\varphi^i$ is canonical induced. In particular, $G^0=G, \ H^0=H$ and $\varphi^0=\varphi$. It follows that each $\varphi^i$ is monic and there is an exact sequence $$0\rightarrow{\coker} \varphi^i\rightarrow {P^i}\rightarrow\coker\varphi^{i+1}\rightarrow0.$$
Therefore, we have an $\mathscr{A}$-projective resolution $P^\bullet=\cdots\rightarrow P^{-1}\stackrel{d^{-1}}\rightarrow P^0\stackrel{d^{0}}\rightarrow P^1\rightarrow\cdots$ with ${\ker} d^0=\coker \varphi$. For any projective object $P$ in $\mathscr{A}$, one has
$${{\rm Hom}_{(T\downarrow\mathscr{A})}({\bf p}(P^i,Q^i),{\bf p}(P,0))}\simeq{\rm Hom}_{\mathscr{A}\times\mathscr{B}}((P^{i},Q^i),{\bf q}({\bf p}(P,0)))={\rm Hom}_{\mathscr{A}}(P^i,P).$$
Note that ${{\rm Hom}_{(T\downarrow\mathscr{A})}}({\bf p}(P^{\bullet},Q^{\bullet}),{\bf p}(P,0))$ is exact. It follows that $P^\bullet$ is complete. This implies that $\coker\varphi$ is a Gorenstein projective object in $\mathscr{A}$.

Let $Q$ be a projective object in $\mathscr{B}$. Applying ${\rm Hom}_{(T\downarrow\mathscr{A})}(-,{\bf p}(0,Q))$ to the complete $(T\downarrow\mathscr{A})$-projective resolution ${\bf p}(P^\bullet,Q^\bullet)$ above, we obtain that $Q^\bullet=\cdots\rightarrow Q^{-1}\rightarrow Q^0\rightarrow Q^1\rightarrow\cdots$ is ${\rm Hom}_{\mathscr{B}}(-,Q)$ exact and the proof is similar to that of Lemma \ref{res GP}(1). So $G=Z^0({Q}^\bullet)$ is Gorenstein projective. This completes the proof.
\end{proof}

\begin{rem}{\rm
We note that Proposition \ref{GP} generalizes \cite[Theorem 1.4]{Z} and \cite[Theorem 1.1]{LZHZ}. More precisely, assume that $(T\downarrow \Mod R)$ $=$ $\Mod {\Lambda}$, where $\Lambda=\left(
                                                                                 \begin{smallmatrix}R & M \\0 & S\\\end{smallmatrix}
                                                                             \right)
$ is a triangular matrix ring. If   $R$ and $S$ are Artin algebras  and $M$ is a compatible $R$-$S$-bimodule, then Proposition \ref{GP} here is just Theorem 1.4 in  \cite{Z}. On the other hand, if $R$ and $S$ are arbitrary rings  and ${\rm pd}_RM<\infty$ and ${\rm fd} M_S<\infty$, then Proposition \ref{GP} here is just Theorem 1.1 in  \cite{LZHZ}.}
\end{rem}

Recall from \cite[Theorem 2.2]{Yang} that a complex $G^\bullet$ in $\textrm{Ch}(R)$ is Gorenstein projective if and only if $G^{m}$
is a Gorenstein projective left $R$-module for all $m\in{\mathbb{Z}}$. As a consequence of  Remark \ref{rem:4.3}(1) and Proposition \ref{GP}, we have the follow corollary.

\begin{cor}Let $(e\downarrow\mathscr{A})$ be a comma category in Example \ref{e exact}(4). Then $\left(
                                                                                        \begin{smallmatrix}
                                                                                          X \\
                                                                                          Y^\bullet \\
                                                                                        \end{smallmatrix}
                                                                                      \right)_\varphi$ is a Gorenstein projective object in $(e\downarrow\mathscr{A})$ if and only if $Y^\bullet$  is a Gorenstein projective object in ${\rm Ch}(R)$ and $\varphi:Y^{0}\to X$ is an injective $R$-morphism with a Gorenstein projective cokernel.
\end{cor}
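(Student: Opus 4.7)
The plan is to deduce this as a direct consequence of Proposition \ref{GP} combined with Proposition \ref{thm:main1}, specialised to the functor $e:\mathrm{Ch}(R)\to\mathrm{Mod}\,R$ with $e(C^\bullet)=C^0$ from Example \ref{e exact}(4). Here $\mathscr{A}=\mathrm{Mod}\,R$ and $\mathscr{B}=\mathrm{Ch}(R)$, both of which have enough projective objects, so the ambient hypothesis of Section 4 is satisfied.

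First I would record the two structural inputs. By Remark \ref{rem:4.3}(1), $e$ is compatible, so Proposition \ref{GP} yields the identity
$$\mathcal{GP}_{(e\downarrow\mathrm{Mod}\,R)}=\langle\mathbf{p}(\mathcal{GP}_{\mathrm{Mod}\,R},\mathcal{GP}_{\mathrm{Ch}(R)})\rangle.$$
Since $e$ is an exact functor, it is automatically $\mathcal{Y}$-exact for every class $\mathcal{Y}\subseteq\mathrm{Ch}(R)$; and the classes $\mathcal{GP}_{\mathrm{Mod}\,R}$, $\mathcal{GP}_{\mathrm{Ch}(R)}$ are closed under extensions (a standard fact for Gorenstein projective objects in any abelian category with enough projectives). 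Hence Proposition \ref{thm:main1} applies and gives
$$\langle\mathbf{p}(\mathcal{GP}_{\mathrm{Mod}\,R},\mathcal{GP}_{\mathrm{Ch}(R)})\rangle=\mathfrak{B}^{\mathcal{GP}_{\mathrm{Mod}\,R}}_{\mathcal{GP}_{\mathrm{Ch}(R)}}.$$

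Unwinding the definition of $\mathfrak{B}^{\mathcal{X}}_{\mathcal{Y}}$ with $\mathcal{X}=\mathcal{GP}_{\mathrm{Mod}\,R}$ and $\mathcal{Y}=\mathcal{GP}_{\mathrm{Ch}(R)}$, membership of $\left(\begin{smallmatrix}X\\Y^\bullet\end{smallmatrix}\right)_\varphi$ in this class is exactly the condition that $Y^\bullet\in\mathcal{GP}_{\mathrm{Ch}(R)}$, that $\varphi:e(Y^\bullet)=Y^0\to X$ is monic, and that $\mathrm{coker}\,\varphi\in\mathcal{GP}_{\mathrm{Mod}\,R}$. Concatenating the two displayed identities above with this unwinding produces precisely the claimed characterisation.

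There is no substantive obstacle: the corollary is a formal consequence of the previously established machinery. The only points worth stating carefully in the written-up proof are (i) the verification that $e$ is compatible (quoted from Remark \ref{rem:4.3}(1)) and (ii) the observation that exactness of $e$ upgrades the $\mathcal{Y}$-exactness hypothesis of Proposition \ref{thm:main1} to hold unconditionally, so that the ``$(1)\Rightarrow(2)$'' direction of that proposition can be invoked.
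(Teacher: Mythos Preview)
Your proposal is correct and follows essentially the same route as the paper: the corollary is stated there as an immediate consequence of Remark \ref{rem:4.3}(1) (compatibility of $e$) and Proposition \ref{GP}, and your write-up simply makes explicit the additional invocation of Proposition \ref{thm:main1} needed to translate $\langle\mathbf{p}(\mathcal{GP}_{\mathrm{Mod}\,R},\mathcal{GP}_{\mathrm{Ch}(R)})\rangle$ into the concrete description $\mathfrak{B}^{\mathcal{GP}_{\mathrm{Mod}\,R}}_{\mathcal{GP}_{\mathrm{Ch}(R)}}$.
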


We are now in a position to prove the second statement of the main
result, Theorem \ref{thm:special precovering}.

{\bf Proof of Theorem \ref{thm:special precovering}(2).} {\rm  Assume that $T:\mathscr{B}\rightarrow \mathscr{A}$ is a compatible functor. Note that $\mathcal{GP}_{\mathscr{B}}\bigcap {\mathcal{GP}_{\mathscr{B}}}^{\bot}$ is the class of projective objects in $\mathscr{B}$. It follows from  (C2) in Definition \ref{df:5.2} that $T(\mathcal{GP}_{\mathscr{B}}\bigcap {\mathcal{GP}_{\mathscr{B}}}^{\bot})\subseteq{{\mathcal{GP}_{\mathscr{A}}}^{\bot}}$. By Theorem \ref{thm:special precovering}(1) and Propositon \ref{GP}, it suffices to show that  $T$ is $\mathcal{GP}_{\mathscr{B}}$-exact.

In fact, for any exact sequence $0\ra A \ra B\ra Y\ra 0$  in $\mathscr{B}$ with $Y\in{\mathcal{GP}_{\mathscr{B}}}$, there is an exact sequence $0\ra K_1\ra P_0\ra Y\ra 0$ in $\mathscr{B}$ such that $K_1$ is a Gorenstein object and $P_0$ is a projective object. We choose an exact sequence $0\ra L_1\ra Q_0\ra A\ra 0$ in $\mathscr{B}$ with $Q_0$ a projective object. Thus we have the following exact commutative diagram:

$$\xymatrix{&0\ar[d]&0\ar[d]&0\ar[d]&\\
0\ar[r]&L_1\ar[r]\ar[d]&Z_{1}\ar[r]\ar[d]&K_1\ar[r]\ar[d]&0\\
0\ar[r]&Q_0\ar[r]\ar[d]&W_{0}\ar[r]\ar[d]&P_0\ar[r]\ar[d]&0\\
    0\ar[r]&A \ar[r]\ar[d]&B\ar[r]\ar[d]&Y \ar[r]\ar[d]&0\\
     &0&0&0,&}$$
where the second row is split. Note that $0\ra T(K_1)\ra T(P_0)\ra T(Y)\ra 0$ is an exact sequence in $\mathscr{A}$ by (C1) in Definition \ref{df:5.2}. Applying the functor $T$ to the above diagram, we have the following commatative diagram with exact rows and columns:
$$\xymatrix{&&&0\ar[d]&\\
&T(L_1)\ar[r]\ar[d]&T(Z_{1})\ar[r]\ar[d]&T(K_1)\ar[r]\ar[d]&0\\
0\ar[r]&T(Q_0)\ar[r]\ar[d]&T(W_{0})\ar[r]\ar[d]&T(P_0)\ar[r]\ar[d]&0\\
    &T(A) \ar[r]\ar[d]&T(B)\ar[r]\ar[d]&T(Y) \ar[r]\ar[d]&0\\
     &0&0&0.&}$$
By the Snake Lemma, we have that $0\ra T(A)\ra T(B)\ra T(Y)\ra 0$ is an exact sequence in $\mathscr{A}$. This completes the proof.}\hfill$\Box$

The following example shows one can get many rings which are not necessary coherent such that the class of Gorenstein projective modules is special precovering over them.

\begin{exa}{\rm Let $S$ be a Gorenstein ring and $R=\left(\begin{smallmatrix} S & S^{(I)} \\ 0 & S\\\end{smallmatrix}\right)$. By Corollary \ref{GP precover}, $\mathcal{GP}(R)$ is a special precovering class. Note that $M=\left(\begin{smallmatrix} 0 \\ S\\\end{smallmatrix}\right)$ is a projective right $S$-module and an injective left $R$-module. Applying Corollary \ref{GP precover} again, we get that $\mathcal{GP}(\Lambda)$ is a special precovering class in $\Mod \Lambda$ for $\Lambda=\left(\begin{smallmatrix} R & M \\ 0 & S\\\end{smallmatrix}\right)$. We set $a=\left(\begin{smallmatrix} 0 & 0&0 \\ 0 &0&1_{S} \\
0 &0&0\\\end{smallmatrix}\right)$ and $I=\{x\in{\Lambda} \ : xa=0\}$. It follows that $\left(\begin{smallmatrix} 0 & S^{(I)}&0 \\ 0 &0&0 \\
0 &0&0\\\end{smallmatrix}\right)$ $\subseteq$ $I$, and hence $I$ is not finitely generated. So $\Lambda$ is not left coherent by \cite[Corollary 4.60]{Lam}.}
\end{exa}

\begin{rem} {\rm Let $\mathscr{A}$ and $\mathscr{B}$ be abelian categories. If $F:\mathscr{A}\rightarrow\mathscr{B}$ is a left exact functor, then there is also a comma category, denoted by $(\mathscr{B}\downarrow F)$ (see \cite{FGR,Marmaridis}). Thus we have the functor ${\bf h}:\mathscr{A}\times\mathscr{B}\rightarrow(\mathscr{B}\downarrow F)$ via
${\bf h}(A,B)=\left(\begin{smallmatrix}A\\ F(A)\oplus B\end{smallmatrix}\right)_{(1, 0)}$ and ${\bf h}(a,b)=\left(\begin{smallmatrix}a\\ F(a)\oplus b\end{smallmatrix}\right)$, where $(A,B)$ is an object in $\mathscr{A}\times\mathscr{B}$ and $(a,b)$ is a morphism in $\mathscr{A}\times\mathscr{B}$. Dually, one can also study when complete hereditary cotorsion pairs in abelian categories $\mathscr{A}$ and $\mathscr{B}$ can induce complete hereditary cotorsion pairs in $(\mathscr{B}\downarrow F)$, and characterize when special preenveloping classes in abelian categories $\mathscr{A}$ and $\mathscr{B}$ can induce special preenveloping classes in $(\mathscr{B}\downarrow F)$ . All the results concerning the comma category  $(T\downarrow\mathscr{A})$ have their
 counterparts by using the comma category induced by left exact functors.}
\end{rem}
\bigskip \centerline {\bf ACKNOWLEDGEMENTS}
\bigskip
This research was partially supported by NSFC (11671069,11771212), Zhejiang Provincial Natural Science Foundation of China (LY18A010032), Qing Lan Project of Jiangsu Province and Jiangsu Government Scholarship for Overseas Studies (JS-2019-328).  Part of the work was done during a visit of the first author to Charles University in Prague with a support by Jiangsu Government Scholarship. He thanks Professors Jan Trlifaj and Jan $\check{\rm{S}}$t'ov\'{\i}$\check{\rm{c}}$ek, and the
faculty of Department of Algebra for their hospitality. The authors would like to thank Professor Changchang Xi for helpful discussions on parts of this article.
The authors are grateful to the
referees for reading the paper carefully and for many suggestions on mathematics and English expressions.

\bigskip

\end{document}